\documentclass[]{amsart}

\pdfoutput=1

\usepackage{amssymb}
\usepackage{amsfonts}
\usepackage{amsmath}
\usepackage{amsthm}
\usepackage{mathtools}
\usepackage{graphicx}
\usepackage{mathrsfs}
\usepackage{dsfont}
\usepackage{amscd}
\usepackage{multirow}

\usepackage[all]{xy}
\normalfont
\usepackage[T1]{fontenc}
\usepackage{calligra}
\usepackage{verbatim}
\usepackage{fourier}
\usepackage[usenames,dvipsnames]{xcolor}
\usepackage[colorlinks=true,linkcolor=Blue,citecolor=Violet]{hyperref}
\usepackage{enumerate}
\usepackage{slashed}
\usepackage{nicematrix}
\usepackage{microtype}
\usepackage{tikz}\usetikzlibrary{snakes,math}
\usepackage{caption}
\usepackage{subcaption}
\usepackage{faktor}

\newcommand{\N}{{\mathds{N}}}
\newcommand{\Z}{{\mathds{Z}}}

\newcommand{\R}{{\mathds{R}}}
\newcommand{\C}{{\mathds{C}}}
\newcommand{\T}{{\mathds{T}}}
\newcommand{\U}{{\mathcal{Z}}}
\newcommand{\D}{{\mathfrak{D}}}
\newcommand{\A}{{\mathfrak{A}}}
\newcommand{\B}{{\mathfrak{B}}}

\newcommand{\bigslant}[2]{\faktor{#1}{#2}}
\newcommand{\Nbar}{\overline{\N}}

\newcommand{\Pbar}{\overline{\N}_\ast}
\newcommand{\Lip}[1][L]{{\mathsf{#1}}}
\newcommand{\TLip}{{\mathsf{T}}}

\newcommand{\Hilbert}[1][H]{{\mathscr{#1}}}

\newcommand{\gradient}{\operatorname{grad}}

\newcommand{\dpropinquity}[1]{{\mathsf{\Lambda}^\ast_{#1}}}

\newcommand{\spectralpropinquity}[1]{{\mathsf{\Lambda}^{\mathsf{spec}}_{#1}}}

\newcommand{\Kantorovich}[1]{{\mathsf{mk}_{#1}}}

\newcommand{\Haus}[1]{{\mathsf{Haus}_{{#1}}\,}}

\newcommand{\StateSpace}{{\mathscr{S}}}

\newcommand{\MongeKant}{{Mon\-ge-Kan\-to\-ro\-vich metric}}

\newcommand{\mcc}[3]{{\mathrm{metCor}\left({#1},{#2},{#3}\right)}}

\newcommand{\qcms}{quantum compact metric space}

\newcommand{\unit}{1}

\newcommand{\sa}[1]{{\mathfrak{sa}\left({#1}\right)}}

\newcommand{\inner}[3]{{\left<{#1},{#2}\right>_{#3}}}

\newcommand{\dom}[1]{{\operatorname*{dom}\left({#1}\right)}}

\newcommand{\codom}[1]{{\operatorname*{codom}\left({#1}\right)}}

\newcommand{\norm}[2]{\left\|{#1}\right\|_{#2}}

\newcommand{\targetsettunnel}[3]{{\mathfrak{t}_{#1}\left({#2}\middle\vert{#3}\right)}}

\newcommand{\CDN}{{\mathsf{DN}}}
\newcommand{\TDN}{{\mathsf{TN}}}

\newcommand{\worknote}[1]{}
\newcommand{\opnorm}[3]{{\left|\mkern-1.5mu\left|\mkern-1.5mu\left| {#1} \right|\mkern-1.5mu\right|\mkern-1.5mu\right|_{#3}^{#2}}}

\newcommand{\tunnelmagnitude}[1]{{\mu\left({#1}\right)}}
\newcommand{\tunnelmodmagnitude}[2]{{\mu\left({#1}\middle\vert{#2}\right)}}

\newcommand{\tunnelextent}[1]{{\chi\left({#1}\right)}}

\newcommand{\tunnelsep}[2]{\mathrm{sep}\left({#1},{#2}\right)}

\newcommand{\alg}[1]{{\mathfrak{#1}}}

\newcommand{\module}[1]{{\mathscr{#1}}}

\newcommand{\ModState}[1]{\widehat{\StateSpace}}

\newcommand{\closure}[1]{\mathrm{cl}\left({#1}\right)}

\renewcommand{\geq}{\geqslant}
\renewcommand{\leq}{\leqslant}

\newcommand{\Dirac}[1][D]{{\slashed{#1}}}

\newcommand{\car}[1][c]{{\mathfrak{#1}}}
\newcommand{\bpartial}{{\overleftarrow{\partial}}}
\newcommand{\fpartial}{{\overrightarrow{\partial}}}
\newcommand{\Spinor}{{\mathscr{S}}}

\theoremstyle{plain}
\newtheorem{theorem}{Theorem}[section]
\newtheorem*{theorem*}{Theorem}

\newtheorem{corollary}[theorem]{Corollary}

\newtheorem{lemma}[theorem]{Lemma}
\newtheorem{proposition}[theorem]{Proposition}

\newtheorem{theorem-definition}[theorem]{Theorem-Definition}

\theoremstyle{definition}
\newtheorem{definition}[theorem]{Definition}
\newtheorem*{definition*}{Definition}

\newtheorem{notation}[theorem]{Notation}

\newtheorem{convention}[theorem]{Convention}

\theoremstyle{remark}

\newtheorem{remark}[theorem]{Remark}

\numberwithin{equation}{section}

\let\oldtocsection=\tocsection
\let\oldtocsubsection=\tocsubsection
\let\oldtocsubsubsection=\tocsubsubsection

\renewcommand{\tocsection}[2]{\bfseries \hspace{0em}\oldtocsection{#1}{#2}}
\renewcommand{\tocsubsection}[2]{\itshape \hspace{1em}\oldtocsubsection{#1}{#2}}
\renewcommand{\tocsubsubsection}[2]{\hspace{2em}\oldtocsubsubsection{#1}{#2}}

\begin{document}

\title[]{How to approximate the flat spectral triple of a quantum torus by fuzzy tori : a twisted tale}

\author{Fr\'{e}d\'{e}ric Latr\'{e}moli\`{e}re}
\email{frederic@math.du.edu}
\urladdr{http://www.math.du.edu/\symbol{126}frederic}
\address{Department of Mathematics \\ University of Denver \\ Denver CO 80208}

\date{\today}
\subjclass[2000]{Primary:  58B34, 46L89, 46L30.}
\keywords{}

\begin{abstract}
We prove that the classical and the quantum flat torus can be rigorously approximated at a differential level by finite-dimensional fuzzy tori within the framework of the spectral propinquity. Standard attempts to establish this convergence are traditionally obstructed by the intrinsic non-locality of discrete calculus and the subsequent failure of the Leibniz rule. While contemporary alternatives such as spectral truncations circumvent this issue by abandoning $C^*$-algebras in favor of operator systems, we instead preserve the $C^*$-algebraic category by generalizing the commutator formula. To this end, we introduce a relaxed notion of a twisted spectral triple where the twist is a linear map acting as a discretized Riesz transform that encapsulates the non-locality of the discrete world. Self-adjointness is obtained by adjoining the backward difference to the forward difference and representing this doubled calculus with the creation and annihilation operators of the CAR algebra. We also require the L-seminorm to control both the twisted commutator and the displacement of the twist from the identity. By extending the spectral propinquity to this generalized setting of twisted spectral triples with possibly unbounded twists, we prove that fuzzy tori equipped with their natural two-sided discrete calculus converge to an amplification of the standard flat Dirac triple, while the underlying twists converge to the identity.
\end{abstract}
\maketitle
\tableofcontents


\section{Introduction}

The construction of matrix models which approximate the differential structure of classical flat torus, and its noncommutative counterparts, the quantum tori, have proven to be an elusive challenge. Discretizing the natural calculus of the torus onto finite spaces, such as closed subgroups of the torus, or their noncommutative analogues, the fuzzy tori, invariably breaks the Leibniz rule. This technical obstruction reflects the non-locality of the discretized calculus. It also immediately precludes that the discretized calculus is implemented by a spectral triple. Our previous work in \cite{Latremoliere13c} proves that fuzzy tori can approximate quantum tori from a purely metric perspective using the propinquity, while our work in \cite{Latremoliere21a} proves that certain matrix models in mathematical physics approximate quantum tori with an unusual spectral triple in the sense of the spectral propinquity. However, we are left with the vexing issue of approximating the standard, flat spectral triple of the quantum torus with matrix models. This is the question this paper proposes to answer. As we shall see, solving this problem relies on a natural extension of the spectral propinquity to twisted spectral triples. Our proposed extension of the spectral propinquity even allows for unbounded twists with a natural closed graph condition.

Historically, efforts to bypass the failure of the Leibniz rule and construct matrix models of the torus have diverged into three distinct tracks, each making its own compromise. The first approach, heavily motivated by the physics literature (e.g. \cite{Schreivogl13, Kimura01}), retains both the fuzzy tori (viewed as C*-algebras) and the standard spectral triple framework by replacing the natural discrete derivatives with commutators. While this circumvents the non-locality issue and yields convergent matrix models for the spectral propinquity, as we demonstrated in \cite{Latremoliere21a}, the resulting limit is a non-standard spectral triple over the torus. Indeed, extensive efforts to identify standard spectral triples on fuzzy tori that actually converge to the flat limit may have  stalled \cite{Barrett15}.

A second approach is to preserve the natural discrete calculus on the fuzzy torus but abandon the spectral triple framework, relying instead on a non-commutative differential bimodule. However, divorcing the metric structure from the spectral triples implies sacrificing the well-developed analytic machinery of Connes' framework: specifically, the Hilbert space formulation and its analytical implications, as well as the connections to K-homology and other works of noncommutative geometry. It also seems fundamental to connect the differential structure to Hilbert spaces, as they are foundational for quantum theory where observables are operators on Hilbert spaces.

The third, recent, alternative is the method of spectral truncations, which attempts to keep a spectral-triple-like framework and the exact commutator formula by abandoning the fuzzy tori and their C*-algebraic structure entirely, in favor of operator systems \cite{Suij21, Suij23a, Suij23}. Truncations offer a compelling physical interpretation, representing the restriction of physical data under partial spectral information. However, because they rely on spectral projections of the classical Dirac operator, they do not natively approximate the torus via matrix models. Furthermore, transitioning to operator systems exacts a toll: we now have to understand anew all the tools of noncommutative geometry within this larger category, e.g. \cite{Suij24,Suij24b}. From the standpoint of noncommutative metric geometry, truncations also reverse the logic we hope to eventually apply when working with the spectral propinquity: truncations inherently assume the continuum limit is known a priori so that it may be truncated, which directly contradicts the core ambition of discovering unknown limits from finite metric models. It should be added that metric approximations in noncommutative geometry based upon operator system was of course pioneered by Kerr \cite{Kerr02, Kerr09}, and that truncations based upon projections on spectral subspaces was a technique used in this area from its early days in \cite{Rieffel00}, and our own \cite{Latremoliere05}. It also seems that, in this line thoughts, the limits are always commutative, though presumably there is no reason this could be not be extended to some noncommutative limits. 

These alternative approaches show that a choice must be made: one can either preserve the exact commutator formula at the cost of the C*-algebraic structure, or preserve the C*-algebraic structure at the cost of modifying the commutator. We advocate here for the latter. We propose retaining the fuzzy tori as C*-algebras, while accommodating the natural non-locality of the discrete calculus through a generalized, \emph{twisted} spectral triple.

We thus propose to start with a natural discrete calculus on fuzzy tori, and prove a convergence result for this calculus. There are, in fact, several such natural candidates where derivatives are replaced with difference operators. However, difference operators, when defined on the GNS Hilbert space for the natural faithful trace on our fuzzy tori, are not self-adjoint. A standard technique is thus used which leads to a form of doubling the spinor space. The main consequence is that the limit of our discrete calculus is a trivial amplification of the sought after flat spectral triple on quantum tori.

Our rationale for this approach is threefold. First, C*-algebras remain the natural, well-developed setting for quantum metric geometry, physical modeling, and the study of auxiliary structures such as group actions. Notably, without the multiplicative structure of a C*-algebra, it becomes conceptually ambiguous to define what a "derivation" or "gradient" fundamentally is among arbitrary linear maps. Second, this departure is well-justified by geometric precedent; twisted spectral triples naturally emerge, from quantum groups to twisted cyclic cohomology, precisely when standard commutators fail to capture the underlying geometry, e.g. \cite{Connes08, Kaad12,Marcolli14,Kaad25}. Because the failure of the Leibniz property is intrinsically tied to the rigid algebraic structure of a standard commutator, introducing an algebraic twist is a natural and mathematically elegant remedy. Third, and crucially for our program, twisted spectral triples integrate seamlessly with the spectral propinquity. The definition of the spectral propinquity does not strictly rely on the exact commutator formula relating a Dirac operator to a gradient \cite{Latremoliere18g}; rather, we construct a map from spectral triples to actions on quantum vector bundle, then apply the more general theory of the covariant metrical propinquity \cite{Latremoliere18d}. By extending this map to accommodate twists, we immediately obtain a distance up to unitary equivalence via the covariant propinquity, a construction that strictly requires the underlying C*-algebraic framework even as it relaxes the commutator formula. Such a construction is of intrinsic interest to possibly handle the convergence of other twisted metric spectral triples. Notably, we will be able to develop this framework to handle some unbounded twists, which is surprising since these typically lead to a complete lack of even a relaxed form of the Leibniz inequality for the associated Lipschitz seminorm analogue, a feature which was of great importance in our program.

The twisted spectral triples developed in this paper are more general than those commonly  encountered in the literature. Our twists are \emph{linear} maps rather than algebra homomorphisms and take values in the full bounded operator algebra over the Hilbert space of the triple. While in this work, our twist for the fuzzy tori are bounded, we will extend the spectral propinquity so that it may also accommodate certain unbounded twists, with an eye on common constructions in the literature. We see our twists more as a perturbation of the identify, and since our examples here are finite dimensional, we leave the question of how to handle such twisted spectral triples to obtain, for instance, KK cycles, to another publication.  Ultimately, because the approximation of the classical torus by fuzzy tori is a foundational test case for noncommutative geometry, any robust metric framework must accommodate it. The present work fills this critical gap, demonstrating that generalized twists offer the precise flexibility required to bridge the discrete and the continuous.

Future applications of the present work include new examples of finite dimensional approximations, such as approximations of various spectral triples on noncommutative solenoids \cite{Latremoliere11c,Latremoliere16,FarsiPacker22,Latremoliere23a,Latremoliere24a}, and other examples based upon quantum tori; moreover, our spectral propinquity can now be applied to twisted spectral triples, new or already in the literature.

In deference to standard conventions in the literature, we reluctantly continue to refer to the following generalized structure as a "triple."

\begin{definition}\label{twisted-def}
A \emph{twisted spectral triple} $(\A,\Hilbert,\Dirac,\rho)$ consists of a C*-algebra $\A$ acting on a Hilbert space $\Hilbert$, a self-adjoint operator $\Dirac$ defined on a dense subspace $\dom{\Dirac}$ of $\Hilbert$ with compact resolvent, and a \emph{linear} map
\begin{equation*}
\rho : \A_{\Dirac} \rightarrow \B(\Hilbert)
\end{equation*}
defined on a dense subspace $\A_{\Dirac}$ of $\A$, such that if $a \in \A_{\Dirac}$ then $a \dom{\Dirac} \subseteq \dom{\Dirac}$, and the twisted commutator
\begin{equation*}
[\Dirac,a]_\rho \coloneqq \Dirac a - \rho(a) \Dirac
\end{equation*}
is bounded.

A \emph{spectral triple} $(\A,\Hilbert,\Dirac)$ is a twisted spectral triple $(\A,\Hilbert,\Dirac,\mathrm{id}_\A)$ with twist the identity of $\A$; in that case we would insist that $\A_{\Dirac}$ be a dense *-subalgebra.
\end{definition}

\begin{remark}
We emphasize that $\rho$ is only assumed to be a linear map, not an algebra homomorphism, that it is not necessarily valued in $\A$, and $\A_{\Dirac}$ is not necessarily a subalgebra of $\A$. We will impose additional requirements when specifically qualifying these structures as metric spectral triples.
\end{remark}

Our presentation begins with an exposition of the discretized calculus on fuzzy tori, and present our twisted spectral triple. We then prove that indeed, fuzzy tori converge to quantum tori for the propinquity, as {\qcms s}. To handle the differential structure, we extend the spectral propinquity to twisted metric spectral triples, and prove that it is indeed zero exactly between unitarily equivalent twisted spectral triples. We apply this construction to conclude by proving the convergence of our twisted spectral triples on fuzzy tori to the usual spectral triple on quantum tori.

\section{The twisted Spectral Triple for Fuzzy Tori}

\subsection{Quantum and Fuzzy Tori}

We begin our exposition with a description of the C*-algebras of the fuzzy and quantum tori, choosing a presentation suited for our purpose. We rely upon our presentation in \cite{Latremoliere21a} and thus, we refer to that paper for proofs of our statements.

We fix $d \in \N\setminus\{0,1\}$ in this entire article. Let $\Pbar = (\N\setminus\{0\})\cup\{\infty\}$ be the one point compactification of the set $\N_\ast \coloneqq \N\setminus\{0\}$ of strictly positive natural numbers.

For all $k\coloneqq(k_1,\ldots,k_d) \in \Pbar^d$, we define the groups:
\begin{equation*}
	k\Z^d \coloneqq \prod_{j=1}^d k_j \Z \text{ and }\Z_k^d \coloneqq  \bigslant{\Z^d}{k\Z^d} = \prod_{j=1}^d \bigslant{\Z}{k_j \Z} \text,
\end{equation*}
with the convention that $\infty \Z = 0 \Z = \{ 0 \}$. In particular, $\Z^d_{\infty,\ldots,\infty} = \Z^d$. Every quotient of $\Z^d$ is isomorphic as a discrete group to $\Z^d_k$ for some $k \in \Pbar^d$. 

Let $\mathcal{C}_2(\Z^d)$ be the space of all normalized $2$-cocycles of $\Z^d$, endowed with the topology of pointwise convergence over $\Z^d$ (note that by Tychonoff theorem, $\mathcal{C}_2(\Z^d)$ is compact). We now define our parameter space as the following subspace of $\Pbar^d\times \mathcal{C}_2(\Z^d)$:
\begin{equation*}
	\Xi \coloneqq \left\{ (k,\sigma) \in \Pbar^d\times \mathcal{C}_2(\Z^d) : \sigma \text{ induces a $2$-cocycle on }\Z^d_k \right\} \text,
\end{equation*}
which is again compact (as a closed subspace of the compact $\Pbar^d\times\mathcal{C}_2(\Z^d)$).

For any $(k,\sigma) \in \Xi$, for any $m,n \in \Z^d_k$, and for any $\xi \in \ell^2(\Z^d_k)$, we define:
\begin{equation*}
	W_{k,\sigma}^m \xi (n) = \sigma(m,n-m)\xi(n-m)
\end{equation*}
and observe that $W_{k,\sigma}^z$ thus defined is a unitary operator on $\ell^2(\Z^d_k)$; moreover $z \in \Z^d_k \mapsto W_{k,\sigma}^z$ is a $\sigma$-projective unitary representation of $\Z^d_k$ \cite[Lemma 2.6]{Latremoliere21a}, i.e. for all $m,n \in \Z^d_k$,
\begin{equation*}
	W_{k,\sigma}^m W_{k,\sigma}^n = \sigma(m,n) W_{k,\sigma}^{m+n} \text.
\end{equation*}
The C*-algebra generated by the unitaries $\{ W_{k,\sigma}^m : m \in \Z^d_k \}$ is the twisted group C*-algebra $C^\ast(\Z^d_k,\sigma)$ of $\Z^d_k$ by the $2$-cocycle $\sigma$; we shall however use the following simplified notation throughout our paper:
\begin{equation*}
	\A_{k,\sigma} \coloneqq C^\ast(\Z^d_k,\sigma) = C^\ast\left(\left\{ W_{k,\sigma}^m : m \in \Z^d_k \right\}\right) \text.
\end{equation*}

\begin{remark}
	Since the groups $\Z^d_k$ are Abelian for any $k\in\Pbar$, they are amenable, and thus we do not distinguish between the reduced group C*-algebras (as defined here) and the full group C*-algebras, as they are *-isomorphic.
\end{remark}

\begin{notation}\label{integrated-rep-notation}
Let $(k,\sigma) \in \Xi$, and let $q_k : \Z^d \to \Z^d_k$ be the canonical surjection. Let $S\subseteq\Z^d$ be finite and not empty. If $f : S\to \C$, then we define:
\begin{equation*}
	f_{k,\sigma} \coloneqq \sum_{m \in S} f(m) W_{k,\sigma}^{q_k(m)}\text.
\end{equation*}
Note that, since $S$ is finite, there exists a neighborhood $N$ of $(\infty,\ldots,\infty)$ in $\Pbar^d$ such that if $(k,\sigma) \in \left\{ (h,\varsigma) \in \Xi : h \in N \right\}$, then the canonical surjection $q_k : \Z^d \twoheadrightarrow \Z^d_k$ is injective when restricted to $S$. In this case, the map $f \in C(S) \mapsto f_{k,\sigma} \in \A_{k,\sigma}$ is an injection. The subspace $\left\{ f_{k,\sigma} : f \colon S \to \C \right\}$ is denoted by $\A_{k,\sigma}(S)$.
\end{notation}

The Pontryagin dual group of $\Z^d_k$, for $k \coloneqq (k_1,\ldots,k_d) \in \Pbar^d$, is the closed subgroup $\U_k^d$ of $\T^d$ defined by:
\begin{equation*}
	\U_k^d \coloneqq \left\{ z \in \T^d : z^k = (1,\ldots,1) \right\}
\end{equation*}
where $\T = \{ z\in\C : |z| = 1\}$, and for all $z \in \T$, we write $z^\infty = 1$, and for all $z\coloneqq (z_1,\ldots,z_d) \in \T^d$, we write
\begin{equation*}
	z^k \coloneqq (z_1^{k_1}, \ldots, z_d^{k_d})\text.
\end{equation*}
There is a canonical, strongly continuous action $\alpha_{k,\sigma}$ of $\U_k^d$ on $\A_{k,\sigma}$ by *-automorphisms, called the dual action, which moreover is spatially implemented in our chosen representation of $\A_{k,\sigma}$. For each $z \in \U^d_k$, we define a unitary $u_k^z$ on $\ell^2(\Z^d_k)$ by setting, for all $\xi \in \ell^2(\Z^d_k)$:
\begin{equation*}
	u_k^z \xi : m \in \Z^d_k \mapsto z^m \xi(m) \text.
\end{equation*}
It is a standard exercise to check that if we denote by $\alpha_{k,\sigma}$ the action induced on the C*-algebra $\B(\ell^2(\Z^d_k))$ of bounded linear operators on $\ell^2(\Z^d_k)$ by setting, for all $z\in\U^d_k$,
\begin{equation*}
	\alpha_{k,\sigma}^z(T) \coloneqq u_k^z T u_k^{-z}
\end{equation*}
then $\alpha_{k,\sigma}(\A_{k,\sigma}) = \A_{k,\sigma}$; in fact a direct computation shows \cite[Lemma 2.24, Corollary 2.26]{Latremoliere21a} that for all $m \in \Z^d_k$:
\begin{equation*}
	\alpha_k^z(W_{k,\sigma}^m) = z^m W_{k,\sigma}^m \text.
\end{equation*}
This action is a strongly continuous action by *-automorphisms on $\A_{k,\sigma}$. Notation asides, these results date back to \cite{Zeller-Meier68}.

\begin{convention}
	As an abuse of notation, we will denote $(\infty,\ldots,\infty) \in \Pbar^d$ simply by $\infty$ when no confusion may arise.
\end{convention}

The C*-algebras $\A_{\infty,\sigma}$, for $\sigma \in \mathscr{C}_2(\Z^d)$, is known as a \emph{quantum torus}; in particular if $\sigma = 1$ then $\A_{\infty,1} = C(\T^d)$ is the C*-algebra of $\C$-valued continuous functions over the $d$-torus $\T^d$. Quantum tori are the prototypes of a noncommutative manifolds \cite{Connes80}. When $(k,\sigma) \in \Xi$ with $k \in \N^d$, the C*-algebra $\A_{k,\sigma}$ is known as a \emph{fuzzy tori}. It is the noncommutative generalization of the commutative case $\A_{k,1} = C(\U^d_k)$; in particular these C*-algebras are finite dimensional and many of them are full matrix algebras. Our purpose is to show that we can discretize the standard calculus on $C(\T^d)$, and more generally $\A_{\infty,\sigma}$ to fuzzy tori in such a way as to obtain approximations in the sense of the spectral propinquity appropriately extended to our current context. In general, approximations of quantum tori by fuzzy tori are already a subject of interest \cite{Latremoliere13c,Latremoliere21a}, but either with different metrics, or only as metric approximations. As a side note, we do not really know of a name for $\A_{k,\sigma}$ when $k\neq\infty$ and $k\notin \N^d$; we shall simply refer to these as fuzzy tori here.

\subsection{The Quantized Calculus on Fuzzy and Quantum Tori}

If $z \in \T$ an $j \in \{1,\ldots,d\}$, we introduce the notation
\begin{equation*}
	(j \gets z) \coloneqq \big( 1, \ldots, 1 , \underbracket{z}_{\text{index }j }, 1, \ldots , 1 \big) \text.
\end{equation*}	

We fix $(k,\sigma) \in \Xi$, so $k \coloneqq (k_1,\ldots,k_d) \in \Nbar^d$ and $\sigma$ a normalized $2$-cocycle of $\Z^d_k$ (identified with its unique lift to $\Z^d$ whenever convenient). We use the dual action $\alpha_{k,\sigma}$ to define a quantized calculus on $\A_{k,\sigma}$. 

Let $j \in \{1,\ldots,d\}$. Let us first work the case where $k_j = \infty$. 
 By \cite{Bratteli79}, there exists a dense *-subalgebra $\dom{\partial_{k,\sigma}^j}$ of $\A_{k,\sigma}$ and a closed linear endomorphism $\partial_{k,\sigma}^j  \colon \dom{\partial_{k,\sigma}^j} \to \A_{k,\sigma}$ such that, for all $a \in C_c(\Z^d_k) \subseteq \A_{k,\sigma}$, the following holds:
\begin{equation*}
	\partial_{k,\sigma}^j(a) = \lim_{t\rightarrow 0} \frac{\alpha_{k,\sigma}^{(j\gets \exp(i t))}(a)-a}{t}\text.
\end{equation*}
The linear endomorphism $\partial_{k,\sigma}^j$, in this case, is a *-derivation of $\A_{k,\sigma}$. For all $a,b \in \dom{\partial_{k,\sigma}^j}$, we indeed have the Leibniz rule:
\begin{equation*}
	\partial_{k,\sigma}^j (a b) = a \partial_{k,\sigma}^j (b) + \partial_{k,\sigma}^j (a) b \text.
\end{equation*}

We now work on the case when $k_j < \infty$. We define:
\begin{equation*}
	\lambda_{k,j} \coloneqq \exp\left( \frac{2 i \pi}{k_j} \right) \text.
\end{equation*}
We then denote the automorphism $\alpha_{k,\sigma}^{(j\gets \lambda_{k,j})}$ by $\alpha_{k,\sigma}^j$ for simplicity. In the discrete case, there really is a continuum of natural choices of discrete derivatives, namely the convex combinations of the forward and backward difference operators defined on $\A_{k,\sigma}$ by:
\begin{equation*}
	\fpartial_{k,\sigma}^j : a\in\A_{k,\sigma} \mapsto \frac{k_j}{2\pi} \left( \alpha_{k,\sigma}^j(a) - a \right) \text.
\end{equation*}
and
\begin{equation*}
	\bpartial_{k,\sigma}^j : a\in\A_{k,\sigma} \mapsto \frac{k_j}{2\pi} \left( a - (\alpha_{k,\sigma}^j)^{-1}(a)  \right) \text.
\end{equation*}
Thus $\bpartial_{k,\sigma}^j=(\alpha_{k,\sigma}^j)^{-1}\circ\fpartial_{k,\sigma}^j$.

Now, the linear endomorphisms $\fpartial_{k,\sigma}^j$ and $\bpartial_{k,\sigma}^j$ of $\A_{k,\sigma}$ are \emph{not} derivations of $\A_{k,\sigma}$, but rather satisfy a ``twisted'' form of the Leibniz identity: for all $a,b \in \A_{k,\sigma}$,
\begin{equation*}
	\fpartial_{k,\sigma}^j (ab) = a\fpartial_{k,\sigma}^j (b) + \fpartial_{k,\sigma}^j (a) \alpha_{k,\sigma}^j (b) \text,
\end{equation*}
and
\begin{equation*}
	\bpartial_{k,\sigma}^j(ab)=a\bpartial_{k,\sigma}^j(b)+\bpartial_{k,\sigma}^j(a)(\alpha_{k,\sigma}^j)^{-1}(b).
\end{equation*}
This is an obvious obstruction to implementing this calculus with a spectral triple. We note that here, $\dom{\fpartial_{k,\sigma}^j} = \dom{\bpartial_{k,\sigma}^j} = \A_{k,\sigma}$.

We now use our calculi to define gradients. We first introduce the CAR algebra $\operatorname{CAR}_d$ associated with $\C^d$, generated by $\car_1,\ldots,\car_d$ subject to
\begin{equation*}
	\car_j\car_l+\car_l\car_j=0,
	\qquad
	\car_j\car_l^*+\car_l^*\car_j=\delta_{j,l}1\text{, where $\delta_{j,l}$ is the Kronecker symbol.}
\end{equation*}
We use its canonical faithful representation on the fermionic Fock space
\begin{equation*}
	\Spinor_d \coloneqq \bigwedge \C^d	= \bigoplus_{k=0}^d \bigwedge^k \C^d
	= \C \oplus \bigoplus_{k=1}^d \bigwedge^k \C^d \text,
\end{equation*}
where $\bigwedge \C^d$ is the exterior algebra. If $e_1,\ldots,e_d$ is the canonical basis of $\C^d$, then we endow $\Spinor_d$ with the unique inner product for which
\begin{equation*}
	1
	\quad\text{and}\quad
	e_{j_1}\wedge\cdots\wedge e_{j_k},
	\qquad
	1\leq j_1<\cdots<j_k\leq d,
\end{equation*}
form an orthonormal basis. This representation sends each $\car_j$ to the adjoint of the creation operator
\begin{equation*}
	\xi\in\bigwedge\C^d\longmapsto e_j\wedge\xi \text.
\end{equation*}

For later comparison with the usual Clifford presentation, set
\begin{equation*}
	\widehat\gamma_j\coloneqq\car_j+\car_j^*,
	\qquad
	\gamma_j\coloneqq i(\car_j-\car_j^*).
\end{equation*}
The $2d$ operators $\widehat\gamma_1,\ldots,\widehat\gamma_d,\gamma_1,\ldots,\gamma_d$ are anticommuting self-adjoint unitaries. The choice of the CAR generators will make our presentation more elegant, and the doubling of the spinors more natural.

For use with the continuous-field argument below, we also identify $\operatorname{CAR}_d$ with the twisted group C*-algebra of $\Z_2^{2d}$ for the cocycle
\begin{equation}\label{car-field-cocycle-eq}
	\mathfrak c(p,q)\coloneqq(-1)^{\sum_{1\leq r<s\leq2d}q_rp_s}
	\qquad(p,q\in\Z_2^{2d}).
\end{equation}
Under this identification, $\car_j$ and $\car_j^*$ are fixed elements of this finite-dimensional algebra.

It will be helpful, to keep our formulas ``symmetric'', to introduce the following notation when $k_j = \infty$:
\begin{equation*}
	\fpartial_{k,\sigma}^j = \bpartial_{k,\sigma}^j \coloneqq \partial_{k,\sigma}^j \text.
\end{equation*}
We note that $\dom{\bpartial_{k,\sigma}^j} = \dom{\fpartial_{k,\sigma}^j}$ for any $(k,\sigma) \in\Xi$. Let
\begin{equation*}
	\dom{\gradient_{k,\sigma}}
	\coloneqq
	\bigcap_{j=1}^d \dom{\fpartial_{k,\sigma}^j} \text.
\end{equation*}

Rather than selecting a scalar convex combination of the forward and backward difference operators, we retain both operators and use the CAR generators as independent coefficients. This treats the two orientations symmetrically, preserves the information carried by each, and is adapted to the construction of a self-adjoint Dirac operator, since taking adjoints exchanges creation and annihilation.

Accordingly, the quantized two-sided gradient of $a\in\dom{\gradient_{k,\sigma}}$ is:
\begin{equation}\label{car-gradient-eq}
	\gradient_{k,\sigma}a \coloneqq
	\sum_{j=1}^d\left( \fpartial_{k,\sigma}^j(a)\otimes\car_j - \bpartial_{k,\sigma}^j(a)\otimes\car_j^* \right)\text.
\end{equation}

Strictly speaking, this element acts as the Clifford multiplication by the quantized two-sided gradient on $\A_{k,\sigma}$, but we will not make this distinction here. We record that $\gradient_{k,\sigma}$ is closed for the norm topology. At $k=\infty$, we recover:
\begin{equation*}
	\gradient_{\infty,\sigma}a=-i\sum_{j=1}^d \partial_{\infty,\sigma}^j(a)\otimes\gamma_j\text,
\end{equation*}
which is the usual flat Clifford multiplication, represented on the faithful Fock module.

The CAR relations give, for every $j\in\{1,\ldots,d\}$,
\begin{align}
	\fpartial_{k,\sigma}^j(a)\otimes1
	&=(1\otimes\car_j^*)\gradient_{k,\sigma}a
	+\gradient_{k,\sigma}a(1\otimes\car_j^*),
	\label{car-forward-extract-eq}\\
	\bpartial_{k,\sigma}^j(a)\otimes1
	&=-\left((1\otimes\car_j)\gradient_{k,\sigma}a
	+\gradient_{k,\sigma}a(1\otimes\car_j)\right).
	\label{car-backward-extract-eq}
\end{align}
Consequently,
\begin{equation}\label{car-component-bound-eq}
	\max\left\{
	\norm{\fpartial_{k,\sigma}^j(a)}{\A_{k,\sigma}},
	\norm{\bpartial_{k,\sigma}^j(a)}{\A_{k,\sigma}}
	:1\leq j\leq d
	\right\}
	\leq2\norm{\gradient_{k,\sigma}a}{\A_{k,\sigma}\otimes\operatorname{CAR}_d}.
\end{equation}

We thus have defined a form of quantized calculus, albeit one which, on its discrete component, satisfies only a twisted version of the Leibniz identity. On the other hand, when $k = (\infty,\ldots,\infty)$, our construction exactly recovers the usual quantized calculus of the quantum torus \cite{Connes80,Connes}, \emph{which is implemented by a spectral triple, in fact using the usual Dirac operator for the flat metric on the torus $\T^d$}. Our claim that the naturally discretized version is indeed the appropriate substitute for fuzzy tori is arguably the point of this paper, as we shall see it indeed approximates the usual quantized calculus on quantum tori in a precise and quite general sense.

We use our quantized calculus to define a {\qcms} structure, defined below, over $\A_{k,\sigma}$ \cite{Connes89, Rieffel98a,Latremoliere13,Latremoliere13b,Latremoliere15}. The proposed analogue of a Lipschitz seminorm on $\A_{k,\sigma}$, called an L-seminorm, is given here by defining $\Lip_{k,\sigma}$ on $\A_{k,\sigma}$ as follows.

\begin{notation}
	If $E$ is a normed vector space, its norm is denoted by $\norm{\cdot}{E}$ unless stated otherwise. Moreover, the norm of a bounded operator $T$ on $E$ is denoted by $\opnorm{T}{}{E}$.
\end{notation}

\begin{definition}
	We define the seminorm $\Lip_{k,\sigma}$ (allowing for the value $\infty)$ as the Minkowski gauge functional for the closure of the set
\begin{equation*}
	\left\{ a \in \dom{\gradient_{k,\sigma}} : \norm{\gradient_{k,\sigma} a}{\A_{k,\sigma}\otimes\operatorname{CAR}_d} \leq 1 \right\} \text.
\end{equation*}
\end{definition}

In particular, if $a \in \dom{\gradient_{k,\sigma}}$ then
\begin{equation*}
	\Lip_{k,\sigma}(a) =  \norm{\gradient_{k,\sigma} a}{\A_{k,\sigma}\otimes\operatorname{CAR}_d} \text.
\end{equation*}
\begin{remark}
	Since actually both $\operatorname{CAR}_d$ and $\A_{k,\sigma}$ are nuclear, there is a unique C*-norm on $\A_{k,\sigma} \otimes \operatorname{CAR}_d$ (we only need one tensorial factor to be nuclear of course, and $\operatorname{CAR}_d$ being finite dimensional, this would be the natural choice, but in any case, both are).
\end{remark}

We wish to prove that $(\A,\Lip_{k,\sigma})$ is indeed a {\qcms}. Our definition is written in  prevision of our section about the spectral propinquity, by introducing the option for a {\qcms} to lack the Leibniz property, in which case we refer to it as \emph{relaxed}. This is not important until our work later in this paper though.
\begin{definition}
	A relaxed {\qcms} $(\A,\Lip)$ is given by a unital C*-algebra $\A$ and a seminorm $\Lip$ defined on a dense subspace $\dom{\Lip}$ of the self-adjoint part $\sa{\A}$ of $\A$ such that:
	\begin{enumerate}
		\item $\{ a \in \dom{\Lip} : \Lip(a) = 0 \} = \R \unit_\A$,
		\item the \emph{\MongeKant} $\Kantorovich{\Lip}$ on the state space $\StateSpace(\A)$ of $\A$, defined for any two states $\varphi,\psi \in \StateSpace(\A)$ by:
		\begin{equation*}
			\Kantorovich{\Lip}(\varphi,\psi) = \sup\left\{ \left| \varphi(a) - \psi(a) \right| : a\in \dom{\Lip}, \Lip(a) \leq 1 \right\}
		\end{equation*}
		is a distance function whose induced topology is the weak* topology on $\StateSpace(\A)$,
		\item $\{a \in \dom{\Lip} : \Lip(a) \leq 1 \}$ is closed in $\A$.
	\end{enumerate}
	A relaxed {\qcms} is a $(C,D)$-{\qcms}, for $C\geq 1$ and $D\geq 0$, when moreover, $\Lip$ is a \emph{$(C,D)$-Leibniz seminorm}, meaning that $\dom{\Lip}$ is a Jordan-Lie subalgebra, and for all $a,b \in \dom{\Lip}$:
	\begin{equation*}
		\max\left\{ \Lip\left(\frac{ab+ba}{2}\right), \Lip\left(\frac{ab-ba}{2i}\right) \right\} \leq C \left( \Lip(a) \norm{b}{\A} + \norm{a}{\A} \Lip(b) \right) + D \Lip(a) \Lip(b) \text.
	\end{equation*} 
\end{definition}

\begin{convention}
	If $L$ is a seminorm on a subspace $F$ of a vector space $E$, we define $L(e) = \infty$ for any $e \in E\setminus F$. With this convention, $\dom{L} = \{ e \in E : L(e) < \infty \} = F$. For convenience, in this context, we write $0\infty = 0$; this way $L$ behaves as a seminorm but valued in $[0,\infty]$. Under this assumption, when $(\A,\Lip)$ is a relaxed {\qcms}, the seminorm $\Lip$ is a lower semicontinuous function on $\A$. 
\end{convention}

We pause for a terminology note. Just as twisted spectral triples are quadruples, and are generalizations of spectral triples, relaxed {\qcms s} are generalizations of {\qcms s}, and if we state that an order pair $(\A,\Lip)$ is a {\qcms}, we do mean that it is a $(C,D)$-Leibniz {\qcms} for some $C\geq 1$ and $D\geq 0$. In other words, we must qualify a {\qcms} of ``relaxed'' whenever we mean to remove the Leibniz condition. We have typically preferred in our work to avoid this use of an adjective to widen, rather than restrict, the meaning of a term, but this is the convention we shall adopt here in accordance with the wider literature. A relaxed {\qcms} is pretty much a compact quantum metric space in the sense of \cite{Rieffel98a,Rieffel99,Rieffel00}, but with the underlying object being a C*-algebra rather than an order unit space, and requiring the Lip-norm to be closed.

Returning to our main topic for this section, we now prove the following.
\begin{lemma}
	For all $(k,\sigma) \in \Xi$, the {\MongeKant} induced on $\StateSpace(\A_{k,\sigma})$ by the seminorm:
	\begin{equation*}
		a \in \dom{\gradient_{k,\sigma}} \mapsto \norm{\gradient_{k,\sigma} a}{\A_{k,\sigma}\otimes\operatorname{CAR}_d}
	\end{equation*}
	metrizes the weak* topology.
\end{lemma}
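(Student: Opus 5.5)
We plan to use Rieffel's criterion for ergodic actions of compact groups, or a direct Fourier-type argument built on the dual action. The standard tool is Rieffel's characterization: the {\MongeKant} of a seminorm $\Lip$ with $\{a:\Lip(a)=0\}=\R\unit$ metrizes the weak* topology if and only if the image of $\{a\in\sa{\A}:\Lip(a)\leq 1, \norm{a}{\A}\leq 1\}$ in $\sa{\A}/\R\unit$ is totally bounded, or equivalently if $\{a : \Lip(a)\leq 1, \varphi(a)=0\}$ is totally bounded for some state $\varphi$. We take $\varphi=\tau$, the canonical faithful tracial state, which is $\alpha_{k,\sigma}$-invariant. By \eqref{car-component-bound-eq}, it suffices to control the components $\fpartial^j_{k,\sigma}(a)$, each of norm at most $2\Lip_{k,\sigma}(a)$.

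The case $k=\infty$ in every coordinate is the classical result, since the quantum torus with the flat Dirac gradient is a {\qcms} (Connes, Rieffel, and the results of \cite{Latremoliere21a}). So the key new work concerns finite $k_j$. We propose a uniform argument in the style of Rieffel's ergodic-action theorem. Equip $\U^d_k$ with the length function $\ell$ given by the word/geodesic length, restricted from the flat metric on $\T^d$ in the continuous directions and measured in steps of $\lambda_{k,j}$ in the discrete directions.

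The first step is the Lipschitz estimate. For $z=(j\gets\lambda_{k,j}^{m})$ we telescope: $\alpha^z(a)-a=\sum_{r=0}^{m-1}(\alpha^j)^r(\alpha^j(a)-a)$. This gives $\norm{\alpha^z(a)-a}{}\leq \frac{2\pi m}{k_j}\norm{\fpartial^j a}{}\leq 4\pi\, \frac{m}{k_j}\Lip(a)$, where we choose $m\leq k_j/2$ by using the backward difference for the other direction. In the continuous directions the analogous bound comes from integrating the derivation. Combining coordinates, we obtain $\norm{\alpha^z(a)-a}{}\leq C\,\ell(z)\Lip(a)$ for all $z\in\U^d_k$, with $C$ depending only on $d$.

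The second step smooths $a$. Pick a continuous function $f$ on $\U^d_k$ with $\int f=1$ and $f\geq 0$, supported in $\{\ell<\varepsilon\}$ and a trigonometric polynomial up to small error. In the finite case $\U^d_k$ is finite, so we may even take $f$ to be a point mass if $\varepsilon$ exceeds the mesh. Then $\norm{\alpha_f(a)-a}{}\leq C\varepsilon\Lip(a)$. Moreover $\alpha_f(a)$ lies in the span of finitely many spectral subspaces $\A_{k,\sigma}(\{m\})$, each one-dimensional and spanned by $W^m$. Its coefficients are $\hat f(m)\tau((W^m)^*a)$, hence bounded by $\norm{a}{}$.

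The main obstacle is to bound $\norm{a}{}$ when $\tau(a)=0$ and $\Lip(a)\leq1$. For this we write $a-\tau(a)=\int_{\U^d_k}(a-\alpha^z(a))\,dz$, using that the action is ergodic so that integrating over the group gives the conditional expectation onto $\C\unit$, namely $\tau$. This yields $\norm{a}{}\leq C\,\mathrm{diam}_\ell(\U^d_k)\leq C'$. Total boundedness then follows, because the set lies within $C\varepsilon$ of a bounded subset of a finite-dimensional space.

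The remaining check is that $\Lip(a)=0$ forces $a\in\C\unit$: by the telescoping bound $a$ is $\alpha$-fixed, hence scalar by ergodicity. Finally, the closure convention of the definition does not change the {\MongeKant}, since $\Lip_{k,\sigma}$ is the lower semicontinuous hull.
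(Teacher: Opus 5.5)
Your proposal is correct and takes essentially the paper's route. The paper also reduces everything to the component bound \eqref{car-component-bound-eq} and then appeals to Rieffel's theory of ergodic compact group actions \cite{Rieffel98a}; your argument unpacks that citation: the telescoping Lipschitz bound for the dual action (the same estimate used later in the proof of Lemma \ref{compression-lemma}), smoothing onto finitely many spectral subspaces, and ergodic averaging to bound the norm of trace-zero elements.

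One small repair: when there is a continuous direction, a nonzero trigonometric polynomial cannot vanish outside $\{\ell<\varepsilon\}$. So take $f\geq 0$ to be a Fej\'er-type trigonometric polynomial with $\int f\ell<\varepsilon$; the bound $\norm{a-\alpha_f(a)}{}\leq C\Lip(a)\int f\ell$ needs no support condition. Alternatively, absorb the $L^1$ approximation error using your bound on $\norm{a}{}$.
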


\begin{proof}
First, note that $W_{k,\sigma}^m \in \dom{\gradient_{k,\sigma}}$ for all $m \in \Z^d_k$. So $\dom{\gradient_{k,\sigma}}$, as a subspace containing a total subset of $\A_{k,\sigma}$, is dense in $\A_{k,\sigma}$.

Let $a \in \dom{\gradient_{k,\sigma}}$. By the component-extraction identities \eqref{car-forward-extract-eq} and \eqref{car-backward-extract-eq}, we have:
\begin{align*}
	\max\left\{
	\norm{\fpartial_{k,\sigma}^j(a)}{\A_{k,\sigma}},
	\norm{\bpartial_{k,\sigma}^j(a)}{\A_{k,\sigma}}
	\right\}
	&\leq2\norm{\gradient_{k,\sigma} a}{\A_{k,\sigma}\otimes\operatorname{CAR}_d}  \text.
\end{align*}
Therefore,
\begin{equation}\label{cqms-thm-eq-1}
	\max\left\{ \norm{\fpartial_{k,\sigma}^j a}{\A_{k,\sigma}} : j \in \{1,\ldots,d\} \right\} \leq 2\norm{\gradient_{k,\sigma} a}{\A_{k,\sigma}\otimes\operatorname{CAR}_d}
\end{equation}
which proves that
\begin{equation}\label{grad-L-norm}
	a\in\dom{\gradient_{k,\sigma}}\mapsto \norm{\gradient_{k,\sigma} a}{\A_{k,\sigma}\otimes\operatorname{CAR}_d}
\end{equation}
is a Lip-norm by \cite[Theorem 3.1]{Rieffel98a} and then\cite[Lemma 1.10]{Rieffel98a} as in \cite[Section 4]{Rieffel98a}.
\end{proof}

\begin{corollary}\label{Kantorovich-cor}
	For all $(k,\sigma) \in \Xi$, the following inclusion holds:
	\begin{equation*}
		\dom{\gradient_{k,\sigma}} \subseteq \dom{\Lip_{k,\sigma}}
	\end{equation*}
	and in particular, $\dom{\Lip_{k,\sigma}}$ is a dense subspace of $\A_{k,\sigma}$.
	
	Moreover, the {\MongeKant} $\Kantorovich{\Lip_{k,\sigma}}$ metrizes the weak* topology on the state space $\StateSpace(\A_{k,\sigma})$ of $\A_{k,\sigma}$. In particular, 
	\begin{equation*}
		\left\{ a \in \dom{\Lip_{k,\sigma}} : \Lip_{k,\sigma}(a) = 0 \right\} = \C \unit_{\A_{k,\sigma}} \text. 
	\end{equation*}
\end{corollary}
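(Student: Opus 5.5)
Write $G$ for the seminorm $a\mapsto\norm{\gradient_{k,\sigma}a}{\A_{k,\sigma}\otimes\operatorname{CAR}_d}$ on $\dom{\gradient_{k,\sigma}}$. The plan is to compare $\Lip_{k,\sigma}$ with $G$, since $\Lip_{k,\sigma}$ is the Minkowski gauge of the closure $\overline{B}$ of the unit ball $B\coloneqq\{a\in\dom{\gradient_{k,\sigma}}: G(a)\leq1\}$.

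\textbf{Inclusion of domains.} Let $a\in\dom{\gradient_{k,\sigma}}$. If $G(a)=0$, then $ta\in B$ for all $t>0$, so $\Lip_{k,\sigma}(a)=0$. Otherwise $a/G(a)\in B\subseteq\overline{B}$, which gives $\Lip_{k,\sigma}(a)\leq G(a)<\infty$. Hence $\dom{\gradient_{k,\sigma}}\subseteq\dom{\Lip_{k,\sigma}}$. The proof of the preceding Lemma shows that $\dom{\gradient_{k,\sigma}}$ is dense, because it contains every $W^m_{k,\sigma}$. Therefore $\dom{\Lip_{k,\sigma}}$ is dense as well.

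\textbf{Equality of the Monge--Kantorovich metrics.} The unit ball of $\Lip_{k,\sigma}$ is contained in $\overline{B}$, since the gauge of a closed convex balanced set has that set as its closed unit ball. Conversely, $B$ is contained in that unit ball. So the two balls lie between $B$ and $\overline{B}$. For fixed states $\varphi,\psi$, the map $a\mapsto|\varphi(a)-\psi(a)|$ is norm continuous, so its supremum over $B$ equals its supremum over $\overline{B}$. Hence $\Kantorovich{\Lip_{k,\sigma}}=\Kantorovich{G}$. By the preceding Lemma, $\Kantorovich{G}$ metrizes the weak* topology on $\StateSpace(\A_{k,\sigma})$, so $\Kantorovich{\Lip_{k,\sigma}}$ does too.

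\textbf{Kernel.} This is the step needing the most care, because $\Lip_{k,\sigma}$ may vanish on elements of $\overline{B}$ that lie outside $\dom{\gradient_{k,\sigma}}$.

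First, $\Lip_{k,\sigma}(\unit)\leq G(\unit)=0$. Indeed, $\unit=W^0_{k,\sigma}$ is fixed by the dual action, so every $\fpartial^j_{k,\sigma}$ and $\bpartial^j_{k,\sigma}$ vanishes on it. Since $\Lip_{k,\sigma}$ is a seminorm, it therefore vanishes on $\C\unit_{\A_{k,\sigma}}$.

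Conversely, suppose $\Lip_{k,\sigma}(a)=0$. Then $ta$ lies in the closed unit ball of $\Lip_{k,\sigma}$, hence in $\overline{B}$, for every $t>0$. It follows that $|\varphi(a)-\psi(a)|\leq \Kantorovich{\Lip_{k,\sigma}}(\varphi,\psi)/t$ for all states $\varphi,\psi$ and all $t>0$. Because $\StateSpace(\A_{k,\sigma})$ is weak* compact and $\Kantorovich{\Lip_{k,\sigma}}$ metrizes that topology, this metric has finite diameter. Letting $t\to\infty$ gives $\varphi(a)=\psi(a)$ for all states.

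Finally, I would conclude that $a$ is scalar as follows. Write $a=b+ic$ with $b,c$ self-adjoint. Then $\varphi(b)$ and $\varphi(c)$ are each independent of $\varphi$. Since states separate self-adjoint elements, $b$ and $c$ are real multiples of $\unit$, so $a\in\C\unit_{\A_{k,\sigma}}$.
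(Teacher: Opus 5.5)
Your proposal is correct and follows the paper's own route. The inclusion comes from comparing gauges. Norm continuity of states identifies $\Kantorovich{\Lip_{k,\sigma}}$ with the Monge--Kantorovich metric of the gradient seminorm, and the finite diameter of the compact state space gives the kernel. You also supply details the paper leaves implicit: why the closed unit ball of the gauge is exactly the closure of the gradient ball, and why constancy of $\varphi(a)$ over all states forces $a\in\C\unit_{\A_{k,\sigma}}$.
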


\begin{proof}
	A direct computation shows that $\dom{\gradient_{k,\sigma}} \subseteq \dom{\Lip_{k,\sigma}}$.

	By construction of $\Lip_{k,\sigma}$, by definition of $\Kantorovich{\Lip_{k,\sigma}}$, and since states are continuous, it is immediate that $\Kantorovich{\Lip_{k,\sigma}}$ agrees with the {\MongeKant} for the seminorm in Expression \eqref{grad-L-norm}. Since $\left(\StateSpace(\A_{k,\sigma}),\Kantorovich{\Lip_{k,\sigma}}\right)$ is a compact metric space, it has bounded diameter, which in turns implies that if $\Lip_{k,\sigma}(a) = 0$ and $\psi \in \StateSpace(\A_{k,\sigma})$,  then $\varphi(a) = \psi(a)$ for all $\varphi \in \StateSpace(\A_{k,\sigma})$, so $a \in \C\unit_{\A_{k,\sigma}}$. Of course, $\Lip_{k,\sigma}(\unit_{\A_{k,\sigma}}) = 0$. This concludes our proof.
\end{proof}

Now, $\Lip_{k,\sigma}$ is a $(4d,0)$-Leibniz seminorm in general; if $k=\infty$ then it is even $(1,0)$-Leibniz.
\begin{lemma}\label{L-Leibniz-lemma}
	For all $a,b \in \dom{\Lip_{k,\sigma}}$, 
	\begin{equation*}
		\Lip_{k,\sigma}(ab) \leq \norm{a}{\A_{k,\sigma}} \Lip_{k,\sigma}(b) + 4d\norm{b}{\A_{k,\sigma}} \Lip_{k,\sigma}(a) \text.
	\end{equation*}
\end{lemma}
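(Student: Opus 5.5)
The plan is to establish the inequality first on $\dom{\gradient_{k,\sigma}}$, where $\Lip_{k,\sigma}(a)=\norm{\gradient_{k,\sigma}a}{\A_{k,\sigma}\otimes\operatorname{CAR}_d}$, by expanding $\gradient_{k,\sigma}(ab)$ with the twisted Leibniz identities. Then I will pass to all of $\dom{\Lip_{k,\sigma}}$ by an approximation argument using lower semicontinuity.

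\emph{Step 1 (the expansion).} Let $a,b\in\dom{\gradient_{k,\sigma}}$. For each $j$ with $k_j<\infty$ the twisted Leibniz identities recorded above give
\begin{equation*}
\fpartial_{k,\sigma}^j(ab)=a\fpartial_{k,\sigma}^j(b)+\fpartial_{k,\sigma}^j(a)\alpha_{k,\sigma}^j(b),\qquad \bpartial_{k,\sigma}^j(ab)=a\bpartial_{k,\sigma}^j(b)+\bpartial_{k,\sigma}^j(a)(\alpha_{k,\sigma}^j)^{-1}(b).
\end{equation*}
When $k_j=\infty$, both operators equal the derivation $\partial^j_{k,\sigma}$, and the same formulas hold with the twist replaced by the identity. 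In particular $ab\in\dom{\gradient_{k,\sigma}}$, since for $k_j=\infty$ the domain of a *-derivation is a subalgebra. Substituting into \eqref{car-gradient-eq} yields
\begin{equation*}
\gradient_{k,\sigma}(ab)=(a\otimes 1)\gradient_{k,\sigma}(b)+\sum_{j=1}^d\left(\fpartial_{k,\sigma}^j(a)\beta_j(b)\otimes\car_j-\bpartial_{k,\sigma}^j(a)\beta_j'(b)\otimes\car_j^*\right),
\end{equation*}
where $\beta_j,\beta_j'$ denote the appropriate *-automorphisms $\alpha^j_{k,\sigma}$, $(\alpha^j_{k,\sigma})^{-1}$, or the identity.

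\emph{Step 2 (the estimate).} The first term has norm at most $\norm{a}{\A_{k,\sigma}}\Lip_{k,\sigma}(b)$. For the sum I will use the crude triangle inequality together with three facts: $\norm{\car_j}{}=\norm{\car_j^*}{}=1$, the $\beta$'s are isometric, and \eqref{car-component-bound-eq} holds. This bounds each of the $2d$ summands by $2\Lip_{k,\sigma}(a)\norm{b}{\A_{k,\sigma}}$, giving the total $4d\norm{b}{\A_{k,\sigma}}\Lip_{k,\sigma}(a)$. This is exactly the claimed constant, so no finer CAR-algebra cancellation is needed.

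\emph{Step 3 (extension).} If $k\in\N^d$, then $\dom{\gradient_{k,\sigma}}=\A_{k,\sigma}$ and we are done. In general, take $a,b\in\dom{\Lip_{k,\sigma}}$ and $\varepsilon>0$. Since $\Lip_{k,\sigma}$ is the gauge of the closure of the gradient unit ball, there are sequences $(a_n)$ and $(b_n)$ in $\dom{\gradient_{k,\sigma}}$ converging in norm to $a$ and $b$, with $\norm{\gradient_{k,\sigma}a_n}{}\leq \Lip_{k,\sigma}(a)+\varepsilon$ and similarly for $b_n$. Then $a_nb_n\to ab$ in norm, and Step 2 bounds $\Lip_{k,\sigma}(a_nb_n)$. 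Lower semicontinuity of $\Lip_{k,\sigma}$, which holds because its unit ball is closed by construction, gives
\begin{equation*}
\Lip_{k,\sigma}(ab)\leq\liminf_n\Lip_{k,\sigma}(a_nb_n).
\end{equation*}
Letting $\varepsilon\to0$ concludes the proof.

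The main point of care is this last extension step. It requires checking that $\Lip_{k,\sigma}$ agrees with the gradient norm on $\dom{\gradient_{k,\sigma}}$, as stated, and that approximating sequences with controlled gradient norms exist. The algebraic expansion in Steps 1 and 2 is routine.
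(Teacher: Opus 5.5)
Your proposal is correct and follows the paper's proof. You expand $\gradient_{k,\sigma}(ab)$ via the twisted Leibniz identities, bound the $2d$ cross terms crudely using \eqref{car-component-bound-eq}, and pass to all of $\dom{\Lip_{k,\sigma}}$ by norm approximation together with closedness of the unit ball of $\Lip_{k,\sigma}$. The differences are cosmetic: your $\varepsilon$-slack approximants give the bilinear bound directly, including when $\Lip_{k,\sigma}(a)=0$, whereas the paper normalizes to $\Lip_{k,\sigma}(a),\Lip_{k,\sigma}(b)\leq 1$, uses homogeneity, and treats vanishing seminorms separately. Also, if Step 2 is phrased with gradient norms, Step 3 needs only the inequality $\Lip_{k,\sigma}\leq\norm{\gradient_{k,\sigma}\,\cdot\,}{}$ on $\dom{\gradient_{k,\sigma}}$, which is immediate from the gauge definition, so the equality you flagged as needing care is not required.
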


\begin{proof}
For all $a,b \in \dom{\gradient_{k,\sigma}}$:
\begin{equation}\label{L-Leibniz-eq}
\begin{split}
	\norm{\gradient_{k,\sigma} (ab)}{\A_{k,\sigma}\otimes\operatorname{CAR}_d}
	&\leq \norm{a}{\A_{k,\sigma}}\norm{\gradient_{k,\sigma}b}{\A_{k,\sigma}\otimes\operatorname{CAR}_d} \\
	&\quad + \sum_{j=1}^d \norm{\fpartial_{k,\sigma}^j(a)\alpha_{k,\sigma}^j(b)}{\A_{k,\sigma}}
	+ \sum_{j=1}^d \norm{\bpartial_{k,\sigma}^j(a)(\alpha_{k,\sigma}^j)^{-1}(b)}{\A_{k,\sigma}} \\
	&\leq \norm{a}{\A_{k,\sigma}}\norm{\gradient_{k,\sigma}b}{\A_{k,\sigma}\otimes\operatorname{CAR}_d}
	+ 4d\norm{b}{\A_{k,\sigma}}\norm{\gradient_{k,\sigma}a}{\A_{k,\sigma}\otimes\operatorname{CAR}_d} \text,
	\end{split}
\end{equation}
where the last inequality follows from the component estimates in Expression \eqref{car-component-bound-eq}.
This inequality extends to $\dom{\Lip_{k,\sigma}}$, as follows. Let $a,b \in \dom{\Lip_{k,\sigma}}$ with $\max\{ \Lip_{k,\sigma}(a), \Lip_{k,\sigma}(b)\} \leq 1$. By definition, there exists $(a_n)_{n\in\N}$ and $(b_n)_{n\in\N}$ in $\dom{\gradient_{k,\sigma}}$ converging, respectively, to $a$ and $b$ in norm, and such that, for all $n \in \N$,
\begin{equation*}
\norm{\gradient_{k,\sigma} a_n}{\A_{k,\sigma}\otimes\operatorname{CAR}_d} \leq 1 \text{ and }\norm{\gradient_{k,\sigma} b_n}{\A_{k,\sigma}\otimes\operatorname{CAR}_d} \leq 1 \text.
\end{equation*}
Therefore, for all $n \in \N$, we conclude by Expression \eqref{L-Leibniz-eq} that
\begin{equation*}
\norm{\gradient_{k,\sigma} (a_n b_n)}{\A_{k,\sigma}\otimes\operatorname{CAR}_d} \leq \norm{a_n}{\A_{k,\sigma}} + 4d \norm{b_n}{\A_{k,\sigma}} \xrightarrow{n\to\infty} \norm{a}{\A_{k,\sigma}} + 4d \norm{b}{\A_{k,\sigma}} \text.
\end{equation*}
Therefore, $\Lip_{k,\sigma}(ab) \leq \norm{a}{\A_{k,\sigma}} + 4d\norm{b}{\A_{k,\sigma}}$; in particular, $ab \in \dom{\Lip_{k,\sigma}}$. It follows by homogeneity that $\dom{\Lip_{k,\sigma}}$ is a subalgebra. It is easy to check that this domain is closed under adjunction and $\Lip_{k,\sigma}$ is a *-seminorm as well.
 
By homogeneity again, if $a,b \in \dom{\Lip_{k,\sigma}}$ with $\Lip_{k,\sigma}(a) > 0$ and $\Lip_{k,\sigma}(b) > 0$, then
\begin{align*}
	\Lip_{k,\sigma}(ab)
	&=\Lip_{k,\sigma}(a) \Lip_{k,\sigma}(b) \Lip_{k,\sigma}\left(\frac{a}{\Lip_{k,\sigma}(a)}\frac{b}{\Lip_{k,\sigma}(b)}\right) \\
	&\leq \Lip_{k,\sigma}(a) \Lip_{k,\sigma}(b)  \left( \norm{\frac{a}{\Lip_{k,\sigma}(a)}}{\A_{k,\sigma}} + 4d \norm{\frac{b}{\Lip_{k,\sigma}(b)}}{\A_{k,\sigma}} \right) \\
	& = \Lip_{k,\sigma}(b) \norm{a}{\A_{k,\sigma}} + 4d \Lip_{k,\sigma}(a) \norm{b}{\A_{k,\sigma}} \text,
\end{align*}
as desired. If $\Lip_{k,\sigma}(a) = 0$ then $a \in \C$ and the result is immediate, similarly if $\Lip_{k,\sigma}(b) = 0$. This concludes our proof.
\end{proof}

Applying the preceding estimate to both $ab$ and $ba$, and using $4d\geq1$, gives the Jordan and Lie estimates with constant $4d$. We can now summarize the crux of our construction thus far:
\begin{theorem}\label{qcms-thm}
	For all $(k,\sigma) \in\Xi$, the ordered pair $(\A_{k,\sigma},\Lip_{k,\sigma})$ is a $(4d,0)$-Leibniz {\qcms}.
\end{theorem}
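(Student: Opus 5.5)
The plan is to check the axioms of a $(4d,0)$-Leibniz {\qcms} one at a time, using the results already established for $\Lip_{k,\sigma}$. The seminorm to work with is the restriction of $\Lip_{k,\sigma}$ to $\sa{\A_{k,\sigma}}$, with domain $\dom{\Lip_{k,\sigma}}\cap\sa{\A_{k,\sigma}}$.

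The first step is density of this domain in $\sa{\A_{k,\sigma}}$. Corollary \ref{Kantorovich-cor} gives that $\dom{\Lip_{k,\sigma}}$ is dense in $\A_{k,\sigma}$. Lemma \ref{L-Leibniz-lemma} gives that this domain is closed under adjoints and that $\Lip_{k,\sigma}$ is a *-seminorm. So for $a\in\sa{\A_{k,\sigma}}$ and $a_n\to a$ in the domain, the real parts $\frac{a_n+a_n^*}{2}$ lie in the domain and still converge to $a$.

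The kernel condition comes from Corollary \ref{Kantorovich-cor}: the kernel of $\Lip_{k,\sigma}$ is $\C\unit$, so its intersection with the self-adjoint part is $\R\unit$. The same corollary shows that $\Kantorovich{\Lip_{k,\sigma}}$ metrizes the weak* topology. Restricting to self-adjoint elements does not change the {\MongeKant}. Indeed, for $a$ with $\Lip_{k,\sigma}(a)\leq 1$, replace $a$ by $e^{i\theta}a$ so that $\varphi(a)-\psi(a)\geq 0$. Its real part $b$ satisfies $\Lip_{k,\sigma}(b)\leq 1$ by the *-seminorm property and the triangle inequality, and $\varphi(b)-\psi(b)=|\varphi(a)-\psi(a)|$ because states are self-adjoint functionals.

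Closedness of the unit ball is by construction. $\Lip_{k,\sigma}$ is the Minkowski gauge of a norm-closed, balanced, convex set $K$, so $\{a:\Lip_{k,\sigma}(a)\leq 1\}=\bigcap_{t>1}tK=K$, using closedness of $K$. Intersecting with the closed set $\sa{\A_{k,\sigma}}$ preserves closedness.

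Finally, the Leibniz property. Lemma \ref{L-Leibniz-lemma} shows that $\dom{\Lip_{k,\sigma}}$ is a *-subalgebra, hence its self-adjoint part is closed under the Jordan and Lie products. For the estimates, apply Lemma \ref{L-Leibniz-lemma} to both $ab$ and $ba$, bounding each by $4d(\norm{a}{}\Lip(b)+\norm{b}{}\Lip(a))$ since $4d\geq1$. The triangle inequality, after dividing by $2$, then gives the Jordan and Lie bounds with $C=4d$ and $D=0$.

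The only step that requires real care, as opposed to citation, is checking that restricting to self-adjoint elements preserves both density and the {\MongeKant}. The *-compatibility of $\Lip_{k,\sigma}$ handles both. It comes from $\fpartial^j(a^*)=\fpartial^j(a)^*$, which holds because $\alpha^j$ is a *-automorphism, together with the identity $\gradient(a^*)=-(\gradient a)^*$ up to the symmetric role of $\car_j$ and $\car_j^*$. I expect to verify that identity directly rather than rely on the brief remark in the proof of Lemma \ref{L-Leibniz-lemma}.
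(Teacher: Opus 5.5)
Your proposal is correct and follows the paper's own proof. Like the paper, it assembles Corollary~\ref{Kantorovich-cor}, the closedness of the unit ball of the gauge of a closed set, and Lemma~\ref{L-Leibniz-lemma} applied to $ab$ and $ba$; you additionally spell out the passage to self-adjoint elements, which the paper leaves implicit. For the $*$-invariance you flag, note that $\gradient_{k,\sigma}(a^*)=-(\gradient_{k,\sigma}a)^*$ fails in general in finite directions, since taking adjoints swaps which of $\fpartial_{k,\sigma}^j(a^*)$ and $\bpartial_{k,\sigma}^j(a^*)$ sits next to $\car_j$; what you should verify instead is $\gradient_{k,\sigma}(a^*)=-(\mathrm{id}\otimes\gamma)\left((\gradient_{k,\sigma}a)^*\right)$, where $\gamma$ is the isometric particle--hole $*$-automorphism of $\operatorname{CAR}_d$ with $\gamma(\car_j)=-\car_j^*$, and this yields $\Lip_{k,\sigma}(a^*)=\Lip_{k,\sigma}(a)$.
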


\begin{proof}
	The seminorm $\Lip_{k,\sigma}$ is defined on a dense  *-subalgebra of $\A_{k,\sigma}$ whose {\MongeKant} $\Kantorovich{\Lip_{k,\sigma}}$ metrizes the weak* topology by Corollary \ref{Kantorovich-cor}, with $\ker \Lip_{k,\sigma} = \C$, and $\Lip_{k,\sigma}$ is lower semicontinuous by construction since its closed unit ball is closed in $\A_{k,\sigma}$. By Lemma \ref{L-Leibniz-lemma}, it has the $(4d,0)$-Leibniz property.
\end{proof}

\begin{remark}
Note that $\norm{\cdot}{\A_{k,\sigma}} + \norm{\gradient_{k,\sigma}\cdot}{\A_{k,\sigma}\otimes \operatorname{CAR}_d}$ is a noncommutative analogue of the $C^1$ norm, whereas $\Lip_{k,\sigma}$, as we shall see, is the analogue of the Lipschitz seminorm, which has a larger domain as soon as $k\notin \N^d$. Among many advantages in constructing a twisted spectral triple for our present calculus, we will indeed obtain a closed form of the seminorm $\Lip_{k,\sigma}$.
\end{remark}

\begin{remark}
	At this point, we only know that $\Lip_{k,\sigma}(a) \leq\norm{\gradient_{k,\sigma} a}{\A_{k,\sigma}\otimes\operatorname{CAR}_d}$ for all $a\in\dom{\gradient_{k,\sigma}}$. We will improve this a bit later on. 
\end{remark}

This construction turns the C*-algebra $\A_{k,\sigma}$ into a fuzzy torus or quantum torus with its natural discretized calculus, derived from the usual torus \cite{Connes80}. In particular, if $k \in \N^d$ and $\sigma_k = 1$, we recover the two-sided difference calculus generated by the forward difference and its adjoint on the subset $\U_k^d$ of the Lie group $\T^d$. We note however that we involve a two sided discrete calculus which doubles the dimension of the spinors.

\subsection{The twisted Spectral Triple}

A spectral triple could be thought of as a spatial representation of a differential calculus, though it has its origins in the more involved concept of a Dirac operator \cite{Connes80}. In any case, the lack of Leibniz property precludes our discretized gradient to be expressed as a commutator, i.e. to be represented by a spectral triple. We now investigate this matter and propose our twisted  spectral triple for the present work. The goal here is to obtain a spatial implementation of the calculus presented above, in such a way that we recover the L-seminorm $\Lip_{k,\sigma}$ using a twisted commutator.

Since our quantized calculi are induced by the action of the groups $\U^d_k$, and since this actions are in fact spatially implemented in the previous section, we naturally begin by trying to  represent our quantized calculus on the Hilbert space $\ell^2(\Z^d_k)$.

Let $j \in \{1,\ldots,d\}$. If $k_j = \infty$, then we set 
\begin{equation*}
	\dom{\nabla_k^j} \coloneqq \left\{ \xi \in \ell^2(\Z^d_k) :  (m_j \xi(m))_{m=(m_1,\ldots,m_d) \in \Z^d_k} \in \ell^2(\Z^d_k)  \right\} \text.
\end{equation*}
and for all $\xi \in \dom{\nabla_k^j}$, we set:
\begin{equation*}
	\nabla_k^j(\xi) \colon m \in \Z^d_k \mapsto i m_j \xi(m) \text.
\end{equation*}
Thus defined, $\nabla_k^j$ is a closed, unbounded operator from the dense subspace $\dom{\nabla_k^j}$ of $\ell^2(\Z^d_k)$ to $\ell^2(\Z^d_k)$. Notably, $\nabla_k^j$ is the generator of the circle action $z \in \T \mapsto u_k^{(j\gets z)}$ (so skew-adjoint), thus relating it to our previous section. More specifically, a simple computation shows that if $a\in \dom{\partial_{k,\sigma}^j}$, then
\begin{equation*}
	[\nabla_k^j , a] = \partial_{k,\sigma}^j(a) \text.
\end{equation*}
This reflects the fact that $\partial_{k,\sigma}^j$ is a *-derivation when $k_j = \infty$. For later convenience, we set $u_k^j = 1$.

If, instead, $k_j < \infty$, we denote $u_k^{(j\gets \lambda_{k,j})}$ by $u_k^j$, and then define the operator:
\begin{equation*}
	\nabla_k^j : \xi \in \ell^2(\Z_k^d) \mapsto \frac{k_j}{2\pi} \left( u_k^j - 1 \right)\xi \text.
\end{equation*}
This choice is the obvious one. The issue which arises is that we do not quite get the desired spatial implementation of our discretized calculus, of course because of the relaxed form of the Leibniz property. For all $a \in \A_{k,\sigma}$, we compute instead:
\begin{align*}
	[\nabla_k^j,a] 
	&= \frac{k_j}{2\pi} [u_k^j, a]  \\
	&= \frac{k_j}{2\pi} (u_k^j a  - a u_k^j ) \\
	&= \frac{k_j}{2\pi} (u_k^j a (u_k^j)^\ast u_k^j - a u_k^j ) \\
	&= \frac{k_j}{2\pi} (\alpha_{k,\sigma}^j (a) - a) u_k^j  \\
	&= \fpartial_{k,\sigma}^j(a) u_k^j  \text.
\end{align*}
Since $(\nabla_k^j)^*=\frac{k_j}{2\pi}((u_k^j)^*-1)$ in a finite direction, and $(\nabla_k^j)^*=-\nabla_k^j$ in a continuous direction, the same computation gives, with our convention $u_k^j=1$ when $k_j=\infty$,
\begin{equation}\label{backward-spatial-eq}
	[(\nabla_k^j)^*,a]
	=-\bpartial_{k,\sigma}^j(a)(u_k^j)^*.
\end{equation}
Now, individually for each $j \in \{1,\ldots,d\}$, the appearance of the unitary $u_k^j$ in this commutator is not necessarily worrisome. Where things become problematic is when we try to assemble our gradient from these components, because these parasitic unitaries are different for each component.

Indeed, let us now define the natural discretized Dirac operator for our quantized calculus. We will work on the Hilbert space:
 \begin{equation*}
	\Hilbert_k\coloneqq\ell^2(\Z_k^d)\otimes\Spinor_d
\end{equation*}
and define
\begin{equation}\label{doubled-dirac-eq}
	\Dirac_k^0 \coloneqq \sum_{j=1}^d\left(\nabla_k^j\otimes\car_j+(\nabla_k^j)^*\otimes\car_j^*\right) \text.
\end{equation}
Since the operators $\nabla_k^j$ are commuting normal operators, the CAR relations give
\begin{equation}\label{dirac-square-eq}
	(\Dirac_k^0)^2=\sum_{j=1}^d(\nabla_k^j)^*\nabla_k^j\otimes1.
\end{equation}
In particular, if $e_m$ denotes the canonical Fourier vector
\begin{equation*}
	e_m(n)\coloneqq\delta_{m,n} =
	\begin{cases}
		1 & \text{if $n=m$,}\\
		0 & \text{otherwise.}
	\end{cases}
\end{equation*}
 then
\begin{equation}\label{dirac-symbol-eq}
	\norm{\Dirac_k^0(e_m\otimes\zeta)}{}^2 =
	\left(
	\sum_{k_j=\infty}m_j^2
	+
	\sum_{k_j<\infty}\left(\frac{k_j}{\pi}\sin\left(\frac{\pi m_j}{k_j}\right)\right)^2
	\right)\norm{\zeta}{}^2.
\end{equation}
Thus $\Dirac_k^0$ is self-adjoint with compact resolvent. Moreover, its nonzero spectrum is uniformly separated from $0$, since $\frac{k_j}{\pi}\sin(\frac{\pi}{k_j})\geq\frac2\pi$ when $k_j\geq2$.

We thus can choose an additional translation term $\varepsilon_k 1$to ensure that 
\begin{equation*}
	\Dirac_k \coloneqq \Dirac_k^0 + \varepsilon_k 1
\end{equation*}
is invertible when $k\neq\infty$. We make the following choice throughout this paper.
\begin{convention}
	We set $\varepsilon_\infty=0$. For each $k\neq\infty$, we choose $0<\varepsilon_k<\frac1\pi$ with $\lim_{k\to(\infty,\ldots,\infty)} \varepsilon_k = 0$, and we set $\Dirac_k=\Dirac_k^0+\varepsilon_k1$.
\end{convention}

These choices are natural, and the formal analogy between $\Dirac_k$ and $\gradient_{k,\sigma}$ is apparent. Furthermore,
\begin{equation*}
	\Dirac_{\infty,\ldots,\infty}=\sum_{j=1}^d m_j\otimes\gamma_j
\end{equation*}
is exactly the Dirac operator which we are trying to approximate, represented on the fermionic Fock module $\Spinor_d$; thus, it is a finite-dimensional Clifford amplification of the usual flat Dirac operator. Moreover, on the span of any finitely many fixed Fourier vectors, $\Dirac_k$ looks increasingly like $\Dirac_{\infty,\ldots,\infty}$ as $k$ tends to infinity.

\begin{remark}
	The use of the faithful Fock representation changes only the finite spinor multiplicity. If $d$ is even, it is the direct sum of $2^{d/2}$ copies of the irreducible complex spin representation. If $d$ is odd, it contains both irreducible types, each with multiplicity $2^{(d-1)/2}$. Thus the limiting differential and metric are the standard flat ones, represented on the Fock module required by the self-adjoint doubling.
\end{remark}

However, as announced, for all $a\in \dom{\gradient_{k,\sigma}}$, we obtain:
\begin{equation}\label{actual-eq}
	[\Dirac_k,a] = \sum_{j=1}^d\left(
	\fpartial_{k,\sigma}^j(a)u_k^j\otimes\car_j
	-
	\bpartial_{k,\sigma}^j(a)(u_k^j)^*\otimes\car_j^*
	\right)\text.
\end{equation}
This is an important departure from the desired formula of Clifford multiplication by the discrete gradient in Expression \eqref{car-gradient-eq}, because using this commutator to define a Lipschitz seminorm would not give us the correct metric leading to an approximation of the continuous distance. This seems to be the end of this story in the literature. Note that it fails even when $\sigma = 1$, i.e. in the commutative case, as soon as $k$ has at least one finite component. The doubling fixes self-adjointness, but it does not by itself fix this separate failure of the usual Leibniz property.

This can, and it seems, this must be fixed by introducing a linear twist in our commutator (various other approaches seem to indeed fail; we have not found a Dirac operator which would differ from our natural choice yet fix this issue either, even introducing additional dimensions). We thus want to introduce a linear map $\rho_{k,\sigma} : \A_{k,\sigma} \rightarrow \B(\Hilbert_k)$ such that, if we set for all $a\in \dom{\gradient_{k,\sigma}}$,
\begin{equation*}
	[\Dirac_k,a]_{\rho_{k,\sigma}} \coloneqq \Dirac_k a - \rho_{k,\sigma}(a) \Dirac_k
\end{equation*}
then
\begin{equation}\label{wish-eq}
	[\Dirac_k,a]_{\rho_{k,\sigma}} = \gradient_{k,\sigma}a \text.
\end{equation}
Comparing Expression \eqref{actual-eq} with Expression \eqref{wish-eq}, we see that we seek to solve for $\rho_{k,\sigma}(a)$ in the equation:
\begin{align*}
	\underbracket[1pt]{( \rho_{k,\sigma}(a) - a ) \Dirac_k}_{ = [\Dirac_k,a] - [\Dirac_k,a]_{\rho_{k,\sigma}}} 
	&=\sum_{k_j<\infty}\left(
	\fpartial_{k,\sigma}^j(a)(u_k^j-1)\otimes\car_j
	-
	\bpartial_{k,\sigma}^j(a)((u_k^j)^*-1)\otimes\car_j^*
	\right)\text.
\end{align*}
For $k\neq\infty$, this is easily solved since $\Dirac_k$ is invertible, and we thus obtain that for all $a \in \dom{\gradient_{k,\sigma}}$, if we set:
\begin{equation}\label{car-twist-eq}
	\rho_{k,\sigma}(a) \coloneqq a+E_{k,\sigma}(a)\Dirac_k^{-1},
\end{equation}
where
\begin{equation}\label{car-error-eq}
	E_{k,\sigma}(a)\coloneqq\sum_{k_j<\infty}\left(
	\fpartial_{k,\sigma}^j(a)(u_k^j-1)\otimes\car_j - \bpartial_{k,\sigma}^j(a)((u_k^j)^*-1)\otimes\car_j^* \right) \text,
\end{equation}
then the desired Expression (\ref{wish-eq}) holds. For $k=\infty$, we set $\rho_{\infty,\sigma}=\mathrm{id}_{\A_{\infty,\sigma}}$, and Expression \eqref{wish-eq} holds without using an inverse.
\begin{remark}
	Thus defined, our twist $\rho_{k,\sigma}$ is actually a bounded map. We will develop our framework for the spectral propinquity for twisted spectral triples, however, to include the possibility of certain types of unbounded twists, for future applications.
\end{remark}

The factorization in Expression \eqref{car-error-eq} is also the key to controlling the twist.
\begin{lemma}\label{car-riesz-lemma}
	If $k_j<\infty$, then
	\begin{equation*}
		\max\left\{
		\opnorm{((u_k^j-1)\otimes\car_j)\Dirac_k^{-1}}{}{\Hilbert_k},
		\opnorm{(((u_k^j)^*-1)\otimes\car_j^*)\Dirac_k^{-1}}{}{\Hilbert_k}
		\right\}
		\leq\frac{4\pi}{k_j}.
	\end{equation*}
\end{lemma}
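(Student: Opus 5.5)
The plan is to diagonalize everything along the Fourier basis $\{e_m : m \in \Z^d_k\}$ and reduce the operator norm to a supremum over finite-dimensional blocks. Each subspace $\Hilbert_k(m) \coloneqq \C e_m \otimes \Spinor_d$ is invariant under $u_k^j \otimes 1$, under $1\otimes\car_j$, $1\otimes\car_j^*$, and under $\Dirac_k$, and hence under $\Dirac_k^{-1}$.
\begin{itemize}
\item On $\Hilbert_k(m)$, the operator $u_k^j$ acts as the scalar $\lambda_{k,j}^{m_j}$.
\item Each $\nabla_k^i$ acts as a scalar $\nu_i(m)$: either $i m_i$, or $\frac{k_i}{2\pi}(\lambda_{k,i}^{m_i}-1)$.
\item Consequently $\Dirac_k$ restricts to $D(m) + \varepsilon_k 1$ on $\Spinor_d$, where $D(m) \coloneqq \sum_i (\nu_i(m)\car_i + \overline{\nu_i(m)}\car_i^*)$.
\end{itemize}
Since $\Hilbert_k$ is the orthogonal direct sum of these blocks, the two operator norms in the statement are the suprema over $m$ of the norms of the restrictions $(\lambda_{k,j}^{m_j}-1)\car_j (D(m)+\varepsilon_k)^{-1}$ and $(\overline{\lambda_{k,j}^{m_j}}-1)\car_j^* (D(m)+\varepsilon_k)^{-1}$.

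For each fixed $m$, set $s(m)\geq 0$ to be the square root of the symbol in \eqref{dirac-symbol-eq}. By \eqref{dirac-square-eq}, $D(m)$ is self-adjoint with $D(m)^2 = s(m)^2 1$. Its spectrum is therefore contained in $\{\pm s(m)\}$, and the spectrum of $D(m)+\varepsilon_k$ is contained in $\{\varepsilon_k \pm s(m)\}$.

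The proof then splits into two cases.
\begin{itemize}
\item If $s(m)=0$, then in particular $\sin(\pi m_j/k_j)=0$, so $\lambda_{k,j}^{m_j}=1$ and both restricted operators vanish.
\item If $s(m)>0$, the remark after \eqref{dirac-symbol-eq} gives $s(m) \geq \frac{k_l}{\pi}|\sin(\pi m_l/k_l)|\geq \frac{2}{\pi}$ for some $l$ (or $s(m)\geq 1$ if the nonzero coordinate lies in an infinite direction). Since $\varepsilon_k < \frac1\pi$, this yields $s(m)\geq 2\varepsilon_k$, hence $|\varepsilon_k \pm s(m)|\geq s(m)/2$ and $\opnorm{(D(m)+\varepsilon_k)^{-1}}{}{}\leq 2/s(m)$.
\end{itemize}
For the numerator, $|\lambda_{k,j}^{m_j}-1| = 2|\sin(\pi m_j/k_j)| \leq \frac{2\pi}{k_j}s(m)$, because the $j$-th term of the symbol is one summand of $s(m)^2$. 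The same bound holds for the conjugate.

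Combining these with $\opnorm{\car_j}{}{}=\opnorm{\car_j^*}{}{}\leq 1$ and submultiplicativity gives, on each block, the bound $\frac{2\pi}{k_j}s(m)\cdot\frac{2}{s(m)} = \frac{4\pi}{k_j}$. Taking the supremum over $m$ yields the lemma.

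The only delicate point is the lower bound on the spectrum of $D(m)+\varepsilon_k$. The shift $\varepsilon_k$ could a priori nearly cancel the eigenvalue $-s(m)$, and this is exactly where the convention $\varepsilon_k<\frac1\pi$ and the uniform gap $\frac2\pi$ of the nonzero spectrum are used. One also has to check carefully that when $s(m)=0$ the numerator genuinely vanishes, even though $\Dirac_k^{-1}$ then has norm $1/\varepsilon_k$.
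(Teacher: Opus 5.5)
Your proposal is correct and follows essentially the same route as the paper. You decompose $\Hilbert_k$ into the invariant blocks $\C e_m\otimes\Spinor_d$ and use $(\Dirac_k^0)^2=r_k(m)^2$ there, together with the gap $r_k(m)\geq\frac{2}{\pi}$ for nonzero symbol and $\varepsilon_k<\frac1\pi$, to get $\norm{\Dirac_k^{-1}\eta}{}\leq\frac{2}{r_k(m)}\norm{\eta}{}$. You then bound $\left|\lambda_{k,j}^{m_j}-1\right|\leq\frac{2\pi}{k_j}r_k(m)$ and note that the numerator vanishes on the zero-symbol block, exactly as the paper does. The only cosmetic difference is that you bound the inverse through the eigenvalues $\varepsilon_k\pm s(m)$ of the self-adjoint block, whereas the paper uses the reverse triangle inequality $\norm{\Dirac_k\eta}{}\geq\norm{\Dirac_k^0\eta}{}-\varepsilon_k\norm{\eta}{}$.
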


\begin{proof}
	For every $m\in\Z_k^d$, set
	\begin{equation*}
		r_k(m)
		\coloneqq
		\left(
			\sum_{k_l=\infty}m_l^2
			+
			\sum_{k_l<\infty}
			\left(
				\frac{k_l}{\pi}
				\sin\left(\frac{\pi m_l}{k_l}\right)
			\right)^2
		\right)^{\frac{1}{2}}\text.
	\end{equation*}
	By definition, $\Dirac_k^0$ leaves 	$\C e_m\otimes\Spinor_d$ invariant, and Expression \eqref{dirac-symbol-eq} gives
	\begin{equation*}
		(\Dirac_k^0)^2(e_m\otimes\zeta) = r_k(m)^2e_m\otimes\zeta
	\end{equation*}
	for every $\zeta\in\Spinor_d$. Since $\Dirac_k=\Dirac_k^0+\varepsilon_k1$ is invertible and $\C e_m\otimes\Spinor_d$ is finite-dimensional, we have
	\begin{equation*}
		\Dirac_k\left(\C e_m\otimes\Spinor_d\right) = \C e_m\otimes\Spinor_d =
		\Dirac_k^{-1}\left(\C e_m\otimes\Spinor_d\right).
	\end{equation*}

	For every $l\in\{1,\ldots,d\}$, the $l$th summand in the definition of $r_k(m)^2$ vanishes if, and only if, $m_l=0$. Consequently,
	\begin{equation*}
		r_k(m)=0 \quad\Longleftrightarrow\quad m=0\text.
	\end{equation*}
	
	Now assume that $m\neq0$, and choose $l$ such that $m_l\neq0$. If $k_l=\infty$, then
	\begin{equation*}
		r_k(m)\geq |m_l|\geq1.
	\end{equation*}
	If $k_l<\infty$, then
	\begin{equation*}
		r_k(m) \geq
		\frac{k_l}{\pi} \left|\sin\left(\frac{\pi m_l}{k_l}\right) \right|
		\geq \frac{k_l}{\pi} \sin\left(\frac{\pi}{k_l}\right) \geq \frac{2}{\pi}
	\end{equation*}
	where the last inequality follows from Jordan's inequality.
	
	Thus
	\begin{equation*}
		r_k(m)\geq\frac{2}{\pi}
	\end{equation*}
	whenever $m\neq0$.

	We continue working with $m\neq 0$. For every $\eta\in\C e_m\otimes\Spinor_d$, the self-adjointness of $\Dirac_k^0$ gives
\begin{equation*}
	\norm{\Dirac_k^0\eta}{}^2 = \inner{\eta}{(\Dirac_k^0)^2\eta}{}
	= r_k(m)^2\norm{\eta}{}^2\text.
\end{equation*}
Consequently,
\begin{align*}
	\norm{\Dirac_k\eta}{}
	&\geq \norm{\Dirac_k^0\eta}{} - \varepsilon_k\norm{\eta}{}\\
	&= (r_k(m)-\varepsilon_k)\norm{\eta}{}\\
	&\geq \frac{r_k(m)}2\norm{\eta}{}\text{ since }\varepsilon_k \in \left(0,\frac{1}{\pi}\right) \subseteq \left(0,\frac{r_k(m)}{2}\right)\text.
\end{align*}
Applying this inequality to
$\eta=\Dirac_k^{-1}(e_m\otimes\zeta)$ yields
\begin{equation}\label{car-riesz-proof-inverse-eq}
	\norm{\Dirac_k^{-1}(e_m\otimes\zeta)}{} \leq \frac{2}{r_k(m)}\norm{\zeta}{}\text.
\end{equation}

	Fix $j$ such that $k_j<\infty$. For every $m\in\Z_k^d$, we
	have
	\begin{equation*}
		u_k^j e_m = \exp\left(\frac{2\pi i m_j}{k_j}\right)e_m
	\end{equation*}
	and therefore
	\begin{align*}
		\left| \exp\left(\frac{2\pi i m_j}{k_j}\right)-1 \right|
		&=	2\left|\sin\left(\frac{\pi m_j}{k_j}\right)\right|\\
		&\leq\frac{2\pi}{k_j}r_k(m).
	\end{align*}
	If $m\neq0$, then, for every $\zeta\in\Spinor_d$, Expression \eqref{car-riesz-proof-inverse-eq} and $\opnorm{\car_j}{}{\Spinor_d}=1$ give
	\begin{align*}
		&
		\norm{
			\left(
				((u_k^j-1)\otimes\car_j)\Dirac_k^{-1}
			\right)(e_m\otimes\zeta)
		}{}\\
		&\qquad\leq
		\left|
			\exp\left(\frac{2\pi i m_j}{k_j}\right)-1
		\right|
		\norm{\Dirac_k^{-1}(e_m\otimes\zeta)}{}\\
		&\qquad\leq
		\frac{2\pi}{k_j}r_k(m)
		\frac{2}{r_k(m)}
		\norm{\zeta}{}\\
		&\qquad=
		\frac{4\pi}{k_j}\norm{\zeta}{}.
	\end{align*}
	If $m=0$, then $(u_k^j-1)e_m=0$, so the same estimate holds.

	Moreover,
	\begin{equation*}
		(u_k^j)^*e_m = \exp\left(-\frac{2\pi i m_j}{k_j}\right)e_m
	\end{equation*}
	and
	\begin{equation*}
		\left|\exp\left(-\frac{2\pi i m_j}{k_j}\right)-1\right|
		= \left|\exp\left(\frac{2\pi i m_j}{k_j}\right)-1\right|\text.
	\end{equation*}
	Since $\opnorm{\car_j^*}{}{\Spinor_d}=1$, the preceding	computation also gives
	\begin{equation*}
		\norm{
			\left(
				(((u_k^j)^*-1)\otimes\car_j^*)\Dirac_k^{-1}
			\right)(e_m\otimes\zeta)
		}{}
		\leq \frac{4\pi}{k_j}\norm{\zeta}{}
	\end{equation*}
	for every $m\in\Z_k^d$ and $\zeta\in\Spinor_d$.

	Both operators leave each subspace $\C e_m\otimes\Spinor_d$ invariant. Taking the supremum of their norms on these mutually orthogonal subspaces proves the result.
\end{proof}

\begin{corollary}\label{car-twist-estimate-cor}
	For all $a\in\dom{\Lip_{k,\sigma}}$,
	\begin{equation}\label{car-twist-estimate-eq}
		\opnorm{\rho_{k,\sigma}(a)-a}{}{\Hilbert_k}
		\leq16\pi\left(\sum_{k_j<\infty}\frac1{k_j}\right)\Lip_{k,\sigma}(a)
		\leq16\pi d\,\Lip_{k,\sigma}(a).
	\end{equation}
\end{corollary}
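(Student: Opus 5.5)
The plan is to exploit the factorization in Expression \eqref{car-error-eq}. By construction \eqref{car-twist-eq}, for $a\in\dom{\gradient_{k,\sigma}}$ and $k\neq\infty$ we have $\rho_{k,\sigma}(a)-a=E_{k,\sigma}(a)\Dirac_k^{-1}$. Each summand of $E_{k,\sigma}(a)$ factors as a product in $\B(\Hilbert_k)$:
\begin{equation*}
\fpartial_{k,\sigma}^j(a)(u_k^j-1)\otimes\car_j=(\fpartial_{k,\sigma}^j(a)\otimes1)\,((u_k^j-1)\otimes\car_j),
\end{equation*}
and similarly for the backward term. Hence
\begin{equation*}
\rho_{k,\sigma}(a)-a=\sum_{k_j<\infty}\Big((\fpartial_{k,\sigma}^j(a)\otimes1)\big[((u_k^j-1)\otimes\car_j)\Dirac_k^{-1}\big]-(\bpartial_{k,\sigma}^j(a)\otimes1)\big[(((u_k^j)^*-1)\otimes\car_j^*)\Dirac_k^{-1}\big]\Big).
\end{equation*}
I will bound each bracketed operator by $\frac{4\pi}{k_j}$ using Lemma \ref{car-riesz-lemma}. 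The prefactors $\fpartial_{k,\sigma}^j(a)$ and $\bpartial_{k,\sigma}^j(a)$ are each bounded by $2\norm{\gradient_{k,\sigma}a}{\A_{k,\sigma}\otimes\operatorname{CAR}_d}$ by Expression \eqref{car-component-bound-eq}. Each finite direction $j$ therefore contributes at most $2\cdot 2\cdot\frac{4\pi}{k_j}=\frac{16\pi}{k_j}$ times the gradient norm. Summing over $j$ gives the first inequality for $a\in\dom{\gradient_{k,\sigma}}$, with $\norm{\gradient_{k,\sigma}a}{}$ in place of $\Lip_{k,\sigma}(a)$. The second inequality is trivial since $k_j\geq1$ gives $\sum_{k_j<\infty}\frac1{k_j}\leq d$. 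The case $k=\infty$ is immediate, as $\rho_{\infty,\sigma}=\mathrm{id}$ and the sum is empty.

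The remaining step, which I expect to be the only real subtlety, is passing from $\dom{\gradient_{k,\sigma}}$ to $\dom{\Lip_{k,\sigma}}$. The two differ when some $k_j=\infty$, and one has only $\Lip_{k,\sigma}\leq\norm{\gradient_{k,\sigma}\cdot}{}$ on the smaller domain. The key observation is that the map $a\mapsto E_{k,\sigma}(a)\Dirac_k^{-1}$ is norm-continuous on all of $\A_{k,\sigma}$. It involves only the finite directions, where $\fpartial_{k,\sigma}^j$ and $\bpartial_{k,\sigma}^j$ are bounded maps of norm at most $\frac{k_j}{\pi}$. Moreover, $\Dirac_k^{-1}$ is bounded for $k\neq\infty$, by \eqref{car-riesz-proof-inverse-eq} together with the bound on the $m=0$ block, where $\Dirac_k=\varepsilon_k$. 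Thus $\rho_{k,\sigma}$ extends, or is understood as extended, continuously to $\dom{\Lip_{k,\sigma}}$.

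Given $a$ with $\Lip_{k,\sigma}(a)\leq1$, the definition of $\Lip_{k,\sigma}$ as the gauge of a closure provides $a_n\in\dom{\gradient_{k,\sigma}}$ with $a_n\to a$ in norm and $\norm{\gradient_{k,\sigma}a_n}{}\leq1$. Applying the established estimate to $a_n$ and letting $n\to\infty$, using the continuity just noted, yields the bound $16\pi\sum_{k_j<\infty}\frac1{k_j}$ for $a$. Homogeneity then gives the general case; if $\Lip_{k,\sigma}(a)=0$ then $a$ is a scalar by Corollary \ref{Kantorovich-cor}, and both sides vanish.
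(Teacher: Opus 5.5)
Your proposal is correct and follows the paper's proof essentially step for step: the same factorization of $\rho_{k,\sigma}(a)-a=E_{k,\sigma}(a)\Dirac_k^{-1}$, the bound of Lemma \ref{car-riesz-lemma} combined with \eqref{car-component-bound-eq}, and then passage to $\dom{\Lip_{k,\sigma}}$ by norm continuity of $\rho_{k,\sigma}$ along an approximating sequence from the closure defining $\Lip_{k,\sigma}$. Your version is slightly more careful in two places. First, you state the intermediate estimate in terms of $\norm{\gradient_{k,\sigma}a}{}$, whereas the paper writes it with $\Lip_{k,\sigma}(a)$, an identification only justified later by Theorem \ref{L-explicit-thm}. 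Second, you explicitly justify the continuity of $\rho_{k,\sigma}$ through boundedness of the finite difference operators and of $\Dirac_k^{-1}$.
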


\begin{proof}
	If $k=(\infty,\ldots,\infty)$, then $\rho_{k,\sigma}=\mathrm{id}_{\A_{k,\sigma}}$, so the result is immediate. Assume that $k\neq(\infty,\ldots,\infty)$, and first let $a\in\dom{\gradient_{k,\sigma}}$. Expressions \eqref{car-twist-eq} and \eqref{car-error-eq} give
	\begin{align*}
		\opnorm{\rho_{k,\sigma}(a)-a}{}{\Hilbert_k}
		&\leq \sum_{k_j<\infty} \norm{\fpartial_{k,\sigma}^j(a)}{\A_{k,\sigma}}
		\opnorm{((u_k^j-1)\otimes\car_j)\Dirac_k^{-1}}{}{\Hilbert_k}\\
		&\quad+\sum_{k_j<\infty}\norm{\bpartial_{k,\sigma}^j(a)}{\A_{k,\sigma}}
		\opnorm{(((u_k^j)^*-1)\otimes\car_j^*)\Dirac_k^{-1}}{}{\Hilbert_k}\\
		&\leq	4\pi\sum_{k_j<\infty} \frac{1}{k_j}
			\left(\norm{\fpartial_{k,\sigma}^j(a)}{\A_{k,\sigma}} + \norm{\bpartial_{k,\sigma}^j(a)}{\A_{k,\sigma}}\right)\\
		&\leq 16\pi
			\left(\sum_{k_j<\infty}\frac{1}{k_j}\right)\norm{\gradient_{k,\sigma}a}{\A_{k,\sigma}\otimes\operatorname{CAR}_d}\\
		&=	16\pi \left(	\sum_{k_j<\infty}\frac{1}{k_j} \right) \Lip_{k,\sigma}(a)\text,
	\end{align*}
	where the second inequality follows from Lemma \ref{car-riesz-lemma}, and the third follows from Expression \eqref{car-component-bound-eq}.

	Now let $a\in\dom{\Lip_{k,\sigma}}$. By the definition of $\Lip_{k,\sigma}$, there exists a sequence $(a_n)_{n\in\N}$ in $\dom{\gradient_{k,\sigma}}$ such that
	\begin{equation*}
		\lim_{n\to\infty}\norm{a_n-a}{\A_{k,\sigma}}=0
		\qquad\text{and}\qquad
		\limsup_{n\to\infty}\norm{\gradient_{k,\sigma}a_n}{\A_{k,\sigma}\otimes\operatorname{CAR}_d } \leq \Lip_{k,\sigma}(a) \text.
	\end{equation*}
	Since $\rho_{k,\sigma}$ is continuous,
	\begin{align*}
		\opnorm{\rho_{k,\sigma}(a)-a}{}{\Hilbert_k}
		&=	\lim_{n\to\infty} \opnorm{\rho_{k,\sigma}(a_n)-a_n}{}{\Hilbert_k}\\
		&\leq 16\pi \left(\sum_{k_j<\infty}\frac{1}{k_j}\right) \Lip_{k,\sigma}(a) \text.
	\end{align*}
	Finally, since $1/k_j\leq1$ whenever $k_j<\infty$,
	\begin{equation*}
		\sum_{k_j<\infty}\frac{1}{k_j}\leq d \text,
	\end{equation*}
	which proves the second inequality.
\end{proof}

We fix from now on
\begin{equation}\label{kappa-d-eq}
	\kappa_d\coloneqq\frac1{16\pi d}.
\end{equation}
Thus Expression \eqref{car-twist-estimate-eq} gives $\kappa_d\opnorm{\rho_{k,\sigma}(a)-a}{}{\Hilbert_k}\leq\Lip_{k,\sigma}(a)$.

Therefore, our twisted spectral triple for a fuzzy torus in this paper is:
\begin{equation*}
	\left( \A_{k,\sigma}, \Hilbert_k , \Dirac_k, \rho_{k,\sigma} \right) \text,
\end{equation*}
where, as before, we denote the Hilbert space $\ell^2(\Z^d_k)\otimes\Spinor_d$ by $\Hilbert_k$. 

While $\opnorm{[\Dirac_k,a]_{\rho_{k,\sigma}}}{}{\Hilbert_k} = \norm{\gradient_{k,\sigma} a}{\A_{k,\sigma}\otimes\operatorname{CAR}_d}$ by construction for all $a\in \dom{\gradient_{k,\sigma}}$, we actually now prove that our twisted spectral triple recovers the L-seminorm $\Lip_{k,\sigma}$. In the process, we establish some helpful results about this spectral triple. 

We will use the following construction repeatedly in this paper.

\begin{notation}
	Let $k \in \Pbar^d$. We endow $\U_k^d$ with its Haar probability measure $\mu_k$. If $f \in L^1(\T^d,\mu_\infty)$, then we define, for any $2$-cocycle $\sigma$ of $\Z^d_k$, the continuous linear endomorphism $\alpha_{k,\sigma}^f$ of $\A_{k,\sigma}$ by:
	\begin{equation*}
		\alpha_{k,\sigma}^f (a) \coloneqq \int_{\U_k^d} f(z) \alpha_{k,\sigma}^z(a) \, d\mu_k(z) \text.
	\end{equation*}
	Note that $\opnorm{\alpha_{k,\sigma}^f}{}{\A_{k,\sigma}} \leq \norm{f}{L^1(\U_k^d,\mu_k)}$.
\end{notation}

We will use the following notation until it becomes redundant at the end of this subsection, for clarity.
\begin{notation}
	For all $(k,\sigma) \in \Xi$, we denote by $\A_{k,\sigma}(\Dirac_k)$ the set:
	\begin{equation*}
		\left\{ a \in \A_{k,\sigma} : a \dom{\Dirac_k} \subseteq \dom{\Dirac_k} \text{ and }[\Dirac_k,a]_{\rho_{k,\sigma}} \text{ is bounded } \right\} \text.
	\end{equation*}
	We note that $\dom{\gradient_{k,\sigma}} \subseteq \A_{k,\sigma}(\Dirac_k)$.
	
	We also define the seminorm:
	\begin{equation*}
		D_{k,\sigma}(a) \coloneqq
		\opnorm{[\Dirac_k,a]_{\rho_{k,\sigma}}}{}{\Hilbert_k}
	\end{equation*}
	and, as per our usual convention, we set $D_{k,\sigma}(a) = \infty$ whenever $a\in \A_{k,\sigma} \setminus \A_{k,\sigma}(\Dirac_k)$.
\end{notation}

	\begin{lemma}\label{dual-iso-lemma}
		For all $(k,\sigma) \in \Xi$, for all $a \in \A_{k,\sigma}(\Dirac_k)$, and for all $z\in \U^d_k$, we observe:
		\begin{equation*}
			D_{k,\sigma}(\alpha_{k,\sigma}^z(a))=D_{k,\sigma}(a) \text.
		\end{equation*}
	\end{lemma}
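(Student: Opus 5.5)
The natural approach is to implement the dual action spatially by the unitaries $v^z \coloneqq u_k^z\otimes 1_{\Spinor_d}$ on $\Hilbert_k$ and to show that they intertwine the twisted commutator, that is
\begin{equation*}
	[\Dirac_k,\alpha_{k,\sigma}^z(a)]_{\rho_{k,\sigma}} = v^z\,[\Dirac_k,a]_{\rho_{k,\sigma}}\,(v^z)^\ast \text.
\end{equation*}
Unitary invariance of the operator norm then gives $D_{k,\sigma}(\alpha_{k,\sigma}^z(a)) = D_{k,\sigma}(a)$. Applying the same identity with $-z$ shows that $\A_{k,\sigma}(\Dirac_k)$ is invariant under $\alpha_{k,\sigma}^z$.

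The first step is to check that $v^z$ commutes with $\Dirac_k$ and preserves $\dom{\Dirac_k}$. This holds because $u_k^z$ is diagonal in the Fourier basis $(e_m)$, as are $\nabla_k^j$, $(\nabla_k^j)^\ast$ and $u_k^j$. Every operator in the formula for $\Dirac_k$ is therefore of the form (diagonal on $\ell^2(\Z^d_k)$)$\otimes$(element of $\operatorname{CAR}_d$), and these all commute with $u_k^z\otimes 1$. In continuous directions, $u_k^z$ preserves $\dom{\nabla_k^j}$ since it multiplies by unimodular scalars. It follows that $v^z$ also commutes with $\Dirac_k^{-1}$ when $k\neq\infty$.

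The second step is the twist, which I expect to be the only genuine point. I would show that $\rho_{k,\sigma}(\alpha_{k,\sigma}^z(a)) = v^z\rho_{k,\sigma}(a)(v^z)^\ast$. The map $\alpha_{k,\sigma}^z$ commutes with $\alpha_{k,\sigma}^j$ because $\U^d_k$ is abelian, so $\fpartial_{k,\sigma}^j$ and $\bpartial_{k,\sigma}^j$ are $\alpha$-equivariant. Moreover $u_k^j$ commutes with $u_k^z$ and $\Dirac_k^{-1}$ commutes with $v^z$. Hence each term of $E_{k,\sigma}(\alpha_{k,\sigma}^z(a))\Dirac_k^{-1}$ equals $v^z(\,\cdot\,)(v^z)^\ast$ applied to the corresponding term for $a$. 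When $k=\infty$ the twist is the identity and there is nothing to check.

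Care is needed here: $\rho_{k,\sigma}$ is only given on $\dom{\gradient_{k,\sigma}}$. In the finite case this is all of $\A_{k,\sigma}$, and in general $\dom{\gradient_{k,\sigma}}$ is $\alpha$-invariant. If $\A_{k,\sigma}(\Dirac_k)$ is larger in some direction, I would use whatever definition of $\rho_{k,\sigma}$ is in force there and verify the same equivariance, which follows from the formula as above.

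Combining the two steps gives
\begin{equation*}
	\Dirac_k\, v^z a (v^z)^\ast - v^z\rho_{k,\sigma}(a)(v^z)^\ast\Dirac_k = v^z[\Dirac_k,a]_{\rho_{k,\sigma}}(v^z)^\ast
\end{equation*}
on $\dom{\Dirac_k}$. Boundedness and the value of the norm are then preserved, which concludes the argument.
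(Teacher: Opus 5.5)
Your proposal is correct and follows essentially the same route as the paper. Both arguments implement $\alpha_{k,\sigma}^z$ by $u_k^z\otimes 1$, use that it commutes with $\Dirac_k$ and $\Dirac_k^{-1}$ to get $\rho_{k,\sigma}(\alpha_{k,\sigma}^z(a)) = v^z\rho_{k,\sigma}(a)(v^z)^\ast$, and conclude by unitary invariance of the norm of the conjugated twisted commutator. You add the equivariance of the difference operators and the invariance of $\A_{k,\sigma}(\Dirac_k)$ explicitly. Your hedge about the domain of $\rho_{k,\sigma}$ can be settled outright: $E_{k,\sigma}$ only involves finite directions, where $\fpartial_{k,\sigma}^j$ and $\bpartial_{k,\sigma}^j$ are bounded and everywhere defined, so the defining formula gives $\rho_{k,\sigma}$ on all of $\A_{k,\sigma}$ and your equivariance computation applies verbatim.
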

	
	\begin{proof}
	We simplify our notation for this proof by setting $\rho \coloneqq \rho_{k,\sigma}$.
	
	For all $z\in\U^d_k$, we note that of course, since $\U^d_k$ is Abelian, the unitary $u_k^z$ commutes with $\nabla_k^j$, as well as $u_k^j$, and thus with $\Dirac_k$ and its inverse; thus
	\begin{equation*}
		\rho_{k,\sigma}(u_k^z a u_k^{-z}) = u_k^z \rho(a) u_k^{-z}
	\end{equation*}
	and therefore, commuting again with $\Dirac_k$,
	\begin{align*}
		[\Dirac_k,\alpha_{k,\sigma}^z(a)]_\rho
		&=[\Dirac_k, u_k^z a u_k^{-z}]_\rho \\
		&=\Dirac_k u_k^z a u_k^{-z} - \rho(u_k^z a u_k^{-z})\Dirac_k \\
		&=\Dirac_k u_k^z a u_k^{-z} - u_k^z \rho(a )u_k^{-z} \Dirac_k \\
		&=u_k^z\left( \Dirac_k a - \rho(a) \Dirac_k \right) u_k^{-z} \\
		&= u_k^z [\Dirac_k,a]_\rho u_k^{-z} \text.
	\end{align*}
Therefore,
	\begin{equation*}
		D_{k,\sigma}(\alpha_{k,\sigma}^z(a))=D_{k,\sigma}(a) \text,
	\end{equation*}
	as desired.
\end{proof}

We now establish the following lemma. We will actually improve on this result in our section on the spectral propinquity once we state our general definition of a metric twisted spectral triple, but this version will suffice for the present section.
\begin{lemma}\label{lower-semi-lemma}
	For all $(k,\sigma) \in \Xi$, the seminorm $D_{k,\sigma}$ is lower semicontinuous on $\A_{k,\sigma}$.
\end{lemma}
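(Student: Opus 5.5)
The plan is to write $D_{k,\sigma}$ on all of $\A_{k,\sigma}$ as a pointwise supremum of norm-continuous functions. A supremum of continuous functions is automatically lower semicontinuous, so this suffices. We simplify notation with $\rho\coloneqq\rho_{k,\sigma}$.

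\emph{Step 1: extend $\rho$ to a bounded map on all of $\A_{k,\sigma}$.} We observe that $\rho$ extends to a norm-continuous linear map $\A_{k,\sigma}\to\B(\Hilbert_k)$. If $k=\infty$, then $\rho$ is the identity. Otherwise, Expression \eqref{car-error-eq} involves only directions with $k_j<\infty$. In those directions, $\fpartial_{k,\sigma}^j$ and $\bpartial_{k,\sigma}^j$ are bounded on all of $\A_{k,\sigma}$, with norm at most $k_j/\pi$, and $\Dirac_k^{-1}$ is bounded. Hence $a\mapsto a+E_{k,\sigma}(a)\Dirac_k^{-1}$ is a bounded linear map on $\A_{k,\sigma}$, which we still denote by $\rho$.

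\emph{Step 2: define the candidate supremum.} For unit vectors $\xi,\eta\in\dom{\Dirac_k}$, we set
\begin{equation*}
	F_{\xi,\eta}(a)\coloneqq\left|\inner{\Dirac_k\eta}{a\xi}{}-\inner{\rho(a)^\ast\eta}{\Dirac_k\xi}{}\right| .
\end{equation*}
By Step 1 and the self-adjointness of $\Dirac_k$, each $F_{\xi,\eta}$ is norm-continuous on $\A_{k,\sigma}$. We then set
\begin{equation*}
	S(a)\coloneqq\sup\left\{F_{\xi,\eta}(a) : \xi,\eta\in\dom{\Dirac_k},\ \norm{\xi}{}=\norm{\eta}{}=1\right\},
\end{equation*}
which is lower semicontinuous. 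It remains to show $S=D_{k,\sigma}$ on $\A_{k,\sigma}$.

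\emph{Step 3: $S\leq D_{k,\sigma}$.} If $a\in\A_{k,\sigma}(\Dirac_k)$, then $F_{\xi,\eta}(a)=|\inner{\eta}{[\Dirac_k,a]_\rho\xi}{}|$. Since $\dom{\Dirac_k}$ is dense, taking the supremum gives $S(a)=D_{k,\sigma}(a)$. If $a\notin\A_{k,\sigma}(\Dirac_k)$, then $D_{k,\sigma}(a)=\infty$, so this inequality is trivial.

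\emph{Step 4: the converse when $S(a)=M<\infty$.} This is the main point: from $S(a)<\infty$ we must deduce $a\in\A_{k,\sigma}(\Dirac_k)$. Fix a unit vector $\xi\in\dom{\Dirac_k}$. For every $\eta\in\dom{\Dirac_k}$ we have
\begin{equation*}
	|\inner{\Dirac_k\eta}{a\xi}{}|\leq\left(\opnorm{\rho(a)}{}{\Hilbert_k}\norm{\Dirac_k\xi}{}+M\right)\norm{\eta}{}.
\end{equation*}
Hence $a\xi\in\dom{\Dirac_k^\ast}=\dom{\Dirac_k}$, so $a\dom{\Dirac_k}\subseteq\dom{\Dirac_k}$.

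Next, the sesquilinear identity gives $\inner{\eta}{(\Dirac_k a-\rho(a)\Dirac_k)\xi}{}$ bounded by $M\norm{\eta}{}\norm{\xi}{}$ for all $\eta$ in a dense set. Therefore $\norm{[\Dirac_k,a]_\rho\xi}{}\leq M\norm{\xi}{}$, so the twisted commutator is bounded with norm at most $M$. Thus $a\in\A_{k,\sigma}(\Dirac_k)$ and $D_{k,\sigma}(a)\leq M=S(a)$.

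The only delicate point is this domain argument in Step 4, which relies on the self-adjointness of $\Dirac_k$. It also relies on $\rho(a)$ being a bounded operator for every $a\in\A_{k,\sigma}$, which is exactly why Step 1 is carried out first. In the finite-dimensional case, where every $k_j<\infty$, all of this is trivial, since $D_{k,\sigma}$ is then simply norm-continuous.
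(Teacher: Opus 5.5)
Your proof is correct, and it takes a different route from the paper's.

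\textbf{The paper's argument.} It works sequentially. Given $a_n\to a$ with $D_{k,\sigma}(a_n)\leq 1$, it uses weak-operator compactness of the unit ball of $\B(\Hilbert_k)$ to extract a subnet of the twisted commutators $[\Dirac_k,a_n]_{\rho_{k,\sigma}}$ converging to some $T$ with $\opnorm{T}{}{\Hilbert_k}\leq 1$. Norm continuity of $\rho_{k,\sigma}$ then identifies $\inner{T\xi}{\eta}{\Hilbert_k}$ with $\inner{a\xi}{\Dirac_k\eta}{\Hilbert_k}-\inner{\rho_{k,\sigma}(a)\Dirac_k\xi}{\eta}{\Hilbert_k}$. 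Self-adjointness of $\Dirac_k$ gives $a\dom{\Dirac_k}\subseteq\dom{\Dirac_k}$ and $[\Dirac_k,a]_{\rho_{k,\sigma}}=T$. This shows the closed unit ball of $D_{k,\sigma}$ is closed, and lower semicontinuity follows by homogeneity.

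\textbf{Your argument.} You use the same two ingredients. The first is norm continuity of $\rho_{k,\sigma}$ on all of $\A_{k,\sigma}$, which the paper uses silently and you justify explicitly in Step 1. The second is self-adjointness of $\Dirac_k$ for the domain argument. You package them as an exact variational formula $D_{k,\sigma}=\sup_{\xi,\eta}F_{\xi,\eta}$ valid on all of $\A_{k,\sigma}$, including the value $\infty$. So no compactness or subnets are needed, and lower semicontinuity is immediate.

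\textbf{What the paper's route buys.} It only needs to pass to a weak limit of $\rho(a_n)$ along a bounded net. That is why the same argument is reused later for $\kappa$-metric twisted spectral triples whose twist is merely closed from the norm topology to the weak operator topology, and possibly unbounded. In that setting $\rho(a)$ need not be defined for every $a\in\A$, so your functionals $F_{\xi,\eta}$ would not make sense on all of $\A$. There, the bound $\opnorm{\rho(a_n)}{}{\Hilbert}\leq\norm{a_n}{\A}+\kappa^{-1}\Lip(a_n)$ supplies the compactness, and the closed graph identifies the limit. Your approach is cleaner in the bounded-twist setting of this lemma but does not transfer to that generality without modification.
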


\begin{proof}
		Let $(a_n)_{n\in\N}$ be a sequence in $\A_{k,\sigma}$ converging to $a \in \A_{k,\sigma}$, such that $a_n \in \A_{k,\sigma}(\Dirac_k)$ with 
		\begin{equation*}
			D_{k,\sigma}(a_n) \leq 1
		\end{equation*}
		for all $n\in\N$. The sequence $([\Dirac_k,a_n]_{\rho_{k,\sigma}})_{n\in\N}$ lies in the unit ball of $\B(\Hilbert_k)$, so it admits a subnet converging in the weak operator topology to some $T\in\B(\Hilbert_k)$ with $\opnorm{T}{}{\Hilbert_k}\leq1$. Since $\rho_{k,\sigma}$ is norm-continuous, for all $\xi,\eta\in\dom{\Dirac_k}$ we have
		\begin{equation*}
			\inner{T\xi}{\eta}{\Hilbert_k}
			=
			\inner{a\xi}{\Dirac_k\eta}{\Hilbert_k}
			-
			\inner{\rho_{k,\sigma}(a)\Dirac_k\xi}{\eta}{\Hilbert_k}.
		\end{equation*}
		Thus
		\begin{equation*}
			\inner{a\xi}{\Dirac_k\eta}{\Hilbert_k}
			=
			\inner{T\xi+\rho_{k,\sigma}(a)\Dirac_k\xi}{\eta}{\Hilbert_k}
		\end{equation*}
		for all $\eta\in\dom{\Dirac_k}$. Since $\Dirac_k$ is self-adjoint, this proves that $a\xi\in\dom{\Dirac_k}$ and
		\begin{equation*}
			\Dirac_k a\xi=T\xi+\rho_{k,\sigma}(a)\Dirac_k\xi.
		\end{equation*}
		Hence $[\Dirac_k,a]_{\rho_{k,\sigma}}=T$ and its norm is at most $1$. Therefore $D_{k,\sigma}(a)\leq1$, and the closed unit ball of $D_{k,\sigma}$ is closed in $\A_{k,\sigma}$.
\end{proof}

\begin{corollary}\label{convolution-L-cor}
	For all $(k,\sigma) \in \Xi$, for all $a\in\A_{k,\sigma}(\Dirac_k)$, and for all $f \in L^1(\U^d_k,\mu_k)$, the following inequality holds:
	\begin{equation*}
		D_{k,\sigma}(\alpha_{k,\sigma}^f(a))
		\leq \norm{f}{L^1(\U^d_k,\mu_k)}D_{k,\sigma}(a)\text.
	\end{equation*}
\end{corollary}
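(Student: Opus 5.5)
We will combine the invariance of Lemma \ref{dual-iso-lemma} with the lower semicontinuity of Lemma \ref{lower-semi-lemma}. The idea is to view $\alpha_{k,\sigma}^f(a)$ as a limit of Riemann-type sums of the translates $\alpha_{k,\sigma}^z(a)$. We may assume $D_{k,\sigma}(a)<\infty$, and by homogeneity we may assume $\norm{f}{L^1}>0$.

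First, we reduce to the case where $f$ is a finite linear combination of point evaluations. When $\U^d_k$ is finite, the integral is already the finite sum $\sum_z \mu_k(\{z\}) f(z)\alpha_{k,\sigma}^z(a)$. In that case we set $c_z=\mu_k(\{z\})f(z)$, so that $\sum_z|c_z|=\norm{f}{L^1}$. When $\U^d_k$ is infinite, the map $z\mapsto\alpha_{k,\sigma}^z(a)$ is norm continuous by strong continuity of the dual action. We then approximate $f$ in $L^1$ by continuous functions $f_n$ with $\norm{f_n}{L^1}\to\norm{f}{L^1}$. For each continuous $f_n$, the Bochner integral is a norm limit of finite sums $\sum_i c_i\alpha_{k,\sigma}^{z_i}(a)$, taken over partitions of $\U^d_k$ into small Borel pieces, with $\sum_i|c_i|\leq\norm{f_n}{L^1}+1/n$. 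A diagonal argument then gives a sequence $b_n=\sum_i c_i^{(n)}\alpha_{k,\sigma}^{z_i^{(n)}}(a)$ with two properties: $b_n\to\alpha_{k,\sigma}^f(a)$ in norm, since $\opnorm{\alpha^{f-f_n}_{k,\sigma}}{}{}\leq\norm{f-f_n}{L^1}$, and $\limsup_n\sum_i|c_i^{(n)}|\leq\norm{f}{L^1}$.

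Next, we bound $D_{k,\sigma}$ on each $b_n$. Each $\alpha_{k,\sigma}^z(a)=u_k^zau_k^{-z}$ lies in $\A_{k,\sigma}(\Dirac_k)$, because $u_k^z$ commutes with $\Dirac_k$ and therefore preserves $\dom{\Dirac_k}$. The proof of Lemma \ref{dual-iso-lemma} also shows that $D_{k,\sigma}(\alpha^z_{k,\sigma}(a))=D_{k,\sigma}(a)$. The twisted commutator $b\mapsto[\Dirac_k,b]_{\rho_{k,\sigma}}$ is linear on the subspace $\A_{k,\sigma}(\Dirac_k)$, since $\rho_{k,\sigma}$ is linear. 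The triangle inequality therefore gives
\[
D_{k,\sigma}(b_n)\leq \sum_i|c_i^{(n)}|\,D_{k,\sigma}(a).
\]

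Finally, we apply Lemma \ref{lower-semi-lemma}, which says that $D_{k,\sigma}$ is lower semicontinuous. Its proof also shows that the norm limit of such $b_n$ lies in $\A_{k,\sigma}(\Dirac_k)$. Together these give
\[
D_{k,\sigma}(\alpha^f_{k,\sigma}(a))\leq\liminf_n D_{k,\sigma}(b_n)\leq\norm{f}{L^1}D_{k,\sigma}(a).
\]

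The main obstacle is the approximation step in the infinite case. We need to control the $\ell^1$ mass of the coefficients of the Riemann sums by $\norm{f}{L^1}$ plus an error that vanishes. Passing through continuous $f_n$ and using uniform continuity of $z\mapsto\alpha^z_{k,\sigma}(a)$ on the compact group should handle this. An alternative is to use Hahn–Banach duality. Since the unit ball of $D_{k,\sigma}$ is closed and convex (Lemma \ref{lower-semi-lemma}), $\alpha^f_{k,\sigma}(a)/\norm{f}{L^1}$ lies in the closed convex balanced hull of the orbit of $a$, and this hull sits inside the scaled $D_{k,\sigma}$-ball.
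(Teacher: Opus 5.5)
Your proposal is correct and follows the paper's own argument. The paper writes, in one line,
\[
D_{k,\sigma}(\alpha_{k,\sigma}^f(a)) \leq \int_{\U^d_k}|f(z)|\,D_{k,\sigma}(\alpha_{k,\sigma}^z(a))\,d\mu_k(z) = \norm{f}{L^1(\U^d_k,\mu_k)}D_{k,\sigma}(a),
\]
justifying the first step by lower semicontinuity (Lemma \ref{lower-semi-lemma}) and the equality by invariance (Lemma \ref{dual-iso-lemma}). Your Riemann-sum approximation supplies exactly the justification of that first inequality, which the paper leaves implicit.

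As a minor simplification, you do not need to pass through continuous $f_n$. Take a fine Borel partition $(P_i)$ with $z_i\in P_i$ and coefficients $c_i=\int_{P_i}f\,d\mu_k$. These already satisfy $\sum_i|c_i|\leq\norm{f}{L^1(\U^d_k,\mu_k)}$, and the sums $\sum_i c_i\alpha_{k,\sigma}^{z_i}(a)$ converge in norm to $\alpha_{k,\sigma}^f(a)$ using only the uniform continuity of $z\mapsto\alpha_{k,\sigma}^z(a)$.
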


\begin{proof}
		Let $a \in \A_{k,\sigma}(\Dirac_k)$. Let $f \in L^1(\U_k^d,\mu_k)$. Since $D_{k,\sigma}$ is lower semicontinuous on $\A_{k,\sigma}$:
		\begin{align*}
		D_{k,\sigma}(\alpha_{k,\sigma}^f(a))
		&\leq \int_{\U^d_k}|f(z)|D_{k,\sigma}(\alpha_{k,\sigma}^z(a))\,d\mu_k(z)\\
		&=D_{k,\sigma}(a)\int_{\U^d_k}|f(z)|\,d\mu_k(z)\\
		&=D_{k,\sigma}(a)\norm{f}{L^1(\U^d_k,\mu_k)}\text,
		\end{align*}
		as desired.
\end{proof}

We conclude with our explicit description of our L-seminorms.
\begin{theorem}\label{L-explicit-thm}
For all $(k,\sigma) \in \Xi$, we have\; $\A_{k,\sigma}(\Dirac_k) = \dom{\Lip_{k,\sigma}}$, and for all $a \in \dom{\Lip_{k,\sigma}}$,
\begin{equation*}
	\Lip_{k,\sigma}(a)
	=\opnorm{[\Dirac_k,a]_{\rho_{k,\sigma}}}{}{\Hilbert_k}\text.
\end{equation*} 
\end{theorem}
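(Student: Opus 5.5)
The plan is to prove the two inequalities $D_{k,\sigma}\leq\Lip_{k,\sigma}$ and $\Lip_{k,\sigma}\leq D_{k,\sigma}$ on all of $\A_{k,\sigma}$, with the convention that both seminorms take the value $\infty$ outside their domains. The equality $\A_{k,\sigma}(\Dirac_k)=\dom{\Lip_{k,\sigma}}$ then follows, since each domain is the set where the corresponding seminorm is finite. The starting point is the identity
\begin{equation*}
D_{k,\sigma}(a)=\norm{\gradient_{k,\sigma}a}{\A_{k,\sigma}\otimes\operatorname{CAR}_d}\quad\text{for all } a\in\dom{\gradient_{k,\sigma}}.
\end{equation*}
It holds by Expression \eqref{wish-eq} and the construction \eqref{car-twist-eq} of $\rho_{k,\sigma}$, and it holds directly when $k=\infty$.

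\emph{First inequality, $D_{k,\sigma}\leq\Lip_{k,\sigma}$.} Let $a$ satisfy $\Lip_{k,\sigma}(a)\leq 1$. By definition, $a$ is a norm limit of elements $a_n\in\dom{\gradient_{k,\sigma}}$ with $\norm{\gradient_{k,\sigma}a_n}{}\leq 1$, hence with $D_{k,\sigma}(a_n)\leq1$. By Lemma \ref{lower-semi-lemma}, the closed unit ball of $D_{k,\sigma}$ is norm closed, so $D_{k,\sigma}(a)\leq 1$ and in particular $a\in\A_{k,\sigma}(\Dirac_k)$. Homogeneity then gives $D_{k,\sigma}\leq\Lip_{k,\sigma}$ and $\dom{\Lip_{k,\sigma}}\subseteq\A_{k,\sigma}(\Dirac_k)$.

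\emph{Second inequality, $\Lip_{k,\sigma}\leq D_{k,\sigma}$.} Let $a\in\A_{k,\sigma}(\Dirac_k)$ with $D_{k,\sigma}(a)\leq1$. I will regularize $a$ by averaging over the dual action.
\begin{enumerate}
\item Let $J=\{j: k_j=\infty\}$. For $n\in\N$, define $f_n$ on $\U^d_k$ as the product of the Fejér kernels $F_n(z_j)$ over $j\in J$. This $f_n$ is positive with $\norm{f_n}{L^1(\U^d_k,\mu_k)}=1$, because $\mu_k$ is a product measure and the finite factors are simply integrated out.
\item The element $\alpha_{k,\sigma}^{f_n}(a)$ has Fourier coefficients in the directions $j\in J$ supported in $\{|m_j|\leq n\}$. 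In the finite directions there are only finitely many indices anyway. Hence $\alpha_{k,\sigma}^{f_n}(a)$ is a finite linear combination of the $W^m_{k,\sigma}$ and lies in $\dom{\gradient_{k,\sigma}}$.
\item Since $(f_n)$ is an approximate identity for the strongly continuous action, $\alpha_{k,\sigma}^{f_n}(a)\to a$ in norm.
\item By Corollary \ref{convolution-L-cor} and the identity above,
\begin{equation*}
\norm{\gradient_{k,\sigma}\alpha_{k,\sigma}^{f_n}(a)}{}=D_{k,\sigma}(\alpha_{k,\sigma}^{f_n}(a))\leq D_{k,\sigma}(a)\leq 1.
\end{equation*}
\end{enumerate}
Consequently $a$ lies in the closure of the unit ball of $\norm{\gradient_{k,\sigma}\cdot}{}$, so $\Lip_{k,\sigma}(a)\leq 1$. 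Homogeneity gives $\Lip_{k,\sigma}\leq D_{k,\sigma}$ and $\A_{k,\sigma}(\Dirac_k)\subseteq\dom{\Lip_{k,\sigma}}$.

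When $k\in\N^d$ the argument simplifies. There $J=\emptyset$, everything is finite-dimensional, $\dom{\gradient_{k,\sigma}}=\A_{k,\sigma}$, and the equality is immediate from the identity above.

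The main obstacle is step 2, confirming that $\alpha_{k,\sigma}^{f_n}(a)$ really lies in $\dom{\gradient_{k,\sigma}}$. This requires the Fourier coefficient computation
\begin{equation*}
\alpha_{k,\sigma}^{f}(W^m_{k,\sigma})=\widehat f(m)\,W^m_{k,\sigma},
\end{equation*}
and the fact that a general $a$ is recovered in its Fourier expansion through the Fejér means. The only other delicate point is a domain hypothesis: Corollary \ref{convolution-L-cor} must be applied only to $a\in\A_{k,\sigma}(\Dirac_k)$, which is exactly the setting of the second inequality.
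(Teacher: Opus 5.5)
\textbf{Verdict.} Your architecture is the paper's, but your regularizing kernel is wrong whenever $k$ has both finite and infinite entries, so step 3 fails in that case. The shared structure is the inclusion $\{\Lip_{k,\sigma}\leq 1\}\subseteq\{D_{k,\sigma}\leq 1\}$ via Lemma \ref{lower-semi-lemma}. The reverse inclusion comes from regularizing with the dual action against nonnegative kernels of $L^1$-norm $1$ with finitely supported Fourier transform. That is combined with Corollary \ref{convolution-L-cor} and the identity $D_{k,\sigma}(a)=\norm{\gradient_{k,\sigma}a}{\A_{k,\sigma}\otimes\operatorname{CAR}_d}$ on $\dom{\gradient_{k,\sigma}}$.

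\textbf{Where it breaks.} As defined in step 1, $f_n(z)=\prod_{j\in J}F_n(z_j)$ is \emph{constant} in the coordinates $z_j$ with $j\notin J$. Hence $\alpha_{k,\sigma}^{f_n}$ contains the full average over the finite subgroup $\{z\in\U^d_k : z_j=1 \text{ for all } j\in J\}$. That average is the conditional expectation onto the closed span of those $W^m_{k,\sigma}$ with $m_j=0$ for all $j\notin J$. Concretely, $\alpha_{k,\sigma}^{f_n}(W^m_{k,\sigma})=\widehat{f_n}(m)W^m_{k,\sigma}$, and $\widehat{f_n}(m)=0$ as soon as $m_j\neq 0$ for some $j\notin J$. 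For instance, with $k=(3,\infty,\ldots,\infty)$ you get $\alpha_{k,\sigma}^{f_n}(W^{(1,0,\ldots,0)}_{k,\sigma})=0$ for every $n$. So $(f_n)_{n\in\N}$ is an approximate identity for $\T^J$, not for $\U^d_k$. Consequently $\alpha_{k,\sigma}^{f_n}(a)$ converges to that conditional expectation of $a$, not to $a$. Your separate treatment of $k\in\N^d$, and the case $k=\infty$, are unaffected.

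\textbf{The fix.} In each finite coordinate, use the normalized point mass at $1$: take $f_n(z)=\prod_{j\in J}F_n(z_j)\prod_{j\notin J}k_j\mathbf{1}_{\{1\}}(z_j)$.
\begin{enumerate}
\item It is still nonnegative with $L^1(\U^d_k,\mu_k)$-norm $1$, since $\mu_k$ gives each point of a finite factor mass $1/k_j$.
\item Its Fourier transform $m\mapsto\prod_{j\in J}\widehat{F_n}(m_j)$ is supported on the finite set $\{m\in\Z^d_k : |m_j|\leq n \text{ for all } j\in J\}$.
\item $\alpha_{k,\sigma}^{f_n}$ is now just the Fej\'er mean for the restricted action of $\T^J$, which does converge to $a$ in norm.
\end{enumerate}
With this correction your argument coincides with the paper's proof. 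The paper takes a general approximate unit of $L^1(\U^d_k,\mu_k)$ with finitely supported Fourier transforms from \cite[Corollary 3.1]{Latremoliere05}, and gets the finite Fourier support of $\alpha_{k,\sigma}^{f_n}(a)$ from \cite[Lemma 3.2]{Latremoliere05}.
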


\begin{proof}
	First, note that if $a \in \dom{\gradient_{k,\sigma}}$, then by construction,
	\begin{equation*}
		D_{k,\sigma}(a)=\opnorm{[\Dirac_k,a]_{\rho_{k,\sigma}}}{}{\Hilbert_k} = \norm{\gradient_{k,\sigma} a}{\A_{k,\sigma}\otimes\operatorname{CAR}_d}.
	\end{equation*}
	Thus
	\begin{multline*}
		\left\{ a \in \dom{\gradient_{k,\sigma}} : \norm{\gradient_{k,\sigma} a}{\A_{k,\sigma}\otimes\operatorname{CAR}_d} \leq 1 \right\} \\
		= \left\{ a \in \dom{\gradient_{k,\sigma}} : D_{k,\sigma}(a) \leq 1 \right\} \text.
	\end{multline*}
	Therefore, by Lemma \ref{lower-semi-lemma} and since $\dom{\gradient_{k,\sigma}} \subseteq \A_{k,\sigma}(\Dirac_k)$, we conclude:
	\begin{align*}
	&\closure{\left\{ a \in \dom{\gradient_{k,\sigma}} : \norm{\gradient_{k,\sigma} a}{\A_{k,\sigma}\otimes\operatorname{CAR}_d} \leq 1 \right\}} \\
	&\quad = \closure{\left\{ a \in \dom{\gradient_{k,\sigma}} : D_{k,\sigma}(a) \leq 1 \right\}}\\
	&\quad \subseteq \closure{\left\{ a \in \A_{k,\sigma}(\Dirac_k) : D_{k,\sigma}(a) \leq 1 \right\}}\\
	&\quad =\left\{ a \in \A_{k,\sigma}(\Dirac_k) : D_{k,\sigma}(a) \leq 1 \right\} \text.
	\end{align*}
	
	Conversely, let $a \in \left\{ a \in \A_{k,\sigma}(\Dirac_k) : D_{k,\sigma}(a) \leq 1 \right\}$. Let $(f_n)_{n\in\N}$ be an approximate unit in $L^1(\U^d_k,\mu_k)$ consisting of functions valued in $[0,\infty)$ and with norm $1$ whose Fourier transforms  have finite support $S_n \subseteq \Z^d_k$ (of course, the support is not the same for each $n \in \N$). Such a sequence can be obtained for instance by applying \cite[Corollary 3.1]{Latremoliere05} to any length function on $\U^d_k$.  By \cite[Lemma 3.2]{Latremoliere05}, for each $n \in \N$, we assert that $\alpha_{k,\sigma}^{f_n}(a) \in \A_{k,\sigma}(S_n)$, and therefore, $\alpha_{k,\sigma}^{f_n}(a) \in \dom{\gradient_{k,\sigma}}$ for all $n\in\N$.
	
	By Corollary \ref{convolution-L-cor}, we then conclude that for all $n \in \N$:
	\begin{align*}
		\norm{\gradient_{k,\sigma} \alpha_{k,\sigma}^{f_n}(a)}{\A_{k,\sigma}\otimes\operatorname{CAR}_d} 
		&= D_{k,\sigma}(\alpha_{k,\sigma}^{f_n}(a)) \\
		&\leq \norm{f_n}{L^1(\U^d_k,\mu_k)} D_{k,\sigma}(a) \leq 1 \text.
	\end{align*}
	Moreover, $a = \lim_{n\to\infty} \alpha_{k,\sigma}^{f_n}(a)$. So 
	\begin{equation*}
		a \in \closure{\left\{ a \in \dom{\gradient_{k,\sigma}} : \norm{\gradient_{k,\sigma} a}{\A_{k,\sigma}\otimes\operatorname{CAR}_d} \leq 1 \right\}}\text.
	\end{equation*}
	
	Thus,
	\begin{multline}\label{L-expression-eq}
		\closure{\left\{ a \in \dom{\gradient_{k,\sigma}} : \norm{\gradient_{k,\sigma} a}{\A_{k,\sigma}\otimes\operatorname{CAR}_d} \leq 1 \right\}} \\ 
		= \left\{ a \in \A_{k,\sigma}(\Dirac_k) : D_{k,\sigma}(a) \leq 1 \right\} \text,
	\end{multline}
	which, in turn, implies that the Minkowski gauge functional $\Lip_{k,\sigma}$ of the closed balanced convex set in Expression \eqref{L-expression-eq} is indeed given by $D_{k,\sigma}$, as desired.
\end{proof}

\begin{remark}
	We note the advantage of working with a Hilbert space here. The operator $\gradient_{k,\sigma}$ is closed for the C*-algebra norms, but its closed unit ball is not closed in norm. However, the operator $\Dirac_k$ is closed in the Hilbert norm instead, and as a result, we capture a larger set of regular elements than with $\gradient_{k,\sigma}$ --- namely, Lipschitz elements rather than merely $C^1$ --- and the closed unit ball for the induced seminorm is indeed closed in the C*-norm.
\end{remark}

We thus have reached the following point in our tale: starting with the obvious discretized calculus on fuzzy tori induced by the dual action, we constructed a {\qcms} using as L-seminorm the Minkowski functional for the closure of the set of elements of discrete gradients with norm at most $1$; this closure is required to obtain a lower semicontinuous L-seminorm. We then implemented this calculus using a twisted spectral triple involving the natural discrete analogue of the flat Dirac operator, and we then proved that indeed, this twisted spectral triple recovers our L-seminorm as the norm of the twisted commutator, with no closure needed any more.

Our purpose in this paper is to indeed prove that, in the sense of the spectral propinquity for twisted spectral triples, these fuzzy tori do indeed converge nicely to the flat spectral triple on $\T^d$, completing our present project.

\bigskip

We note in passing that our twist above is a linear combination over $\A_{k,\sigma}$ of what could be called discretized Riesz transforms, illustrating heuristically the non-local information encoded in the twist. Our twist, as we shall see formally at the end of this article, is indeed a perturbation of the identity by a term which, when applied to functions $f$ over a fixed finite set of $\Z^d$ and with bounded L-seminorms, indeed converges to $0$ uniformly as $k$ goes to $(\infty,\ldots,\infty)$. 

We note that our efforts in this section were centered on keeping the L-seminorm, i.e. the quantum metric data, as the norm of the discretized gradient on fuzzy tori, and adjusting the spectral triple instead. Indeed, using the untwisted spectral triple actually leads to exactly one issue for us: the lack of continuity of the quantum metric. We thus begin our discussion of convergence with the metric component.

\section{Convergence for the Propinquity}

We begin with the proof that, if we ignore for now the more complicated differential structure, and focus on the metric aspects only, then for any normalized $2$-cocycle $\sigma$ on $\Z^d_k$,
\begin{equation}\label{goal-prop-limit-eq}
	\lim_{\substack{(k,\varsigma)\to(\infty,\sigma) \\ (k,\varsigma) \in \Xi}} \dpropinquity{4d,0}((\A_{k,\varsigma},\Lip_{k,\varsigma}),(\A_{\infty,\sigma},\Lip_{\infty,\sigma})) = 0 \text,
\end{equation}
where $\dpropinquity{4d,0}$ is the propinquity constructed in \cite{Latremoliere13b,Latremoliere14,Latremoliere15}. We only briefly recall \cite{Latremoliere13b} that a $(4d,0)$-tunnel between two $(4d,0)$-{\qcms s} $(\A,\Lip_\A)$ and $(\B,\Lip_\B)$ is a quadruple $(\D,\Lip_\D,\pi_\A,\pi_\B)$ where $(\D,\Lip_\D)$ is a $(4d,0)$-{\qcms}, and $\pi_\A : (\D,\Lip_\D)\to(\A,\Lip_\A)$ and $\pi_\B:(\D,\Lip_\D)\to(\B,\Lip_\B)$ are two quantum isometries. We also recall \cite{Latremoliere13} that a *-morphism $\pi : (\D,\Lip_\D)\to(\A,\Lip_\A)$ between two {\qcms s} $(\D,\Lip_\D)$ and $(\A,\Lip_\A)$ is a surjection such that $\pi(\dom{\Lip_\D}) = \dom{\Lip_\A}$, and $\Lip_\A(a) = \inf\{\Lip_\D(d) : d\in\dom{\Lip_\D},\pi(d) = a\}$ for all $a\in\dom{\Lip_\A}$.

The extent of such a tunnel $\tau\coloneqq(\D,\Lip_\D,\pi_\A,\pi_\B)$ is the number
\begin{multline*}
	\tunnelextent{\tau}\coloneqq \max\Big\{ \Haus{\Kantorovich{\Lip_\D}}\left( \left\{ \varphi\circ\pi_\A:\varphi\in\StateSpace(\A)\right\},\StateSpace(\D)\right), \\
	\Haus{\Kantorovich{\Lip_\D}}\left( \left\{ \varphi\circ\pi_\B:\varphi\in\StateSpace(\B)\right\},\StateSpace(\D)\right) \Big\}
\end{multline*}
where we use the following notation.
\begin{notation}
If $(E,d)$ is a metric space, the Hausdorff distance induced on the hyperspace of all closed subsets of $E$ is denoted by $\Haus{d}$ \cite{Hausdorff}. If no confusion may arise, we also may denote this Hausdorff metric by $\Haus{E}$.
\end{notation}

The \emph{$(4d,0)$-propinquity} $\dpropinquity{4d,0}((\A,\Lip_\A),(\B,\Lip_\B))$ between any two $(4d,0)$-{\qcms s} is then  
\begin{equation*}
	\dpropinquity{4d,0}((\A,\Lip_\A),(\B,\Lip_\B)) \coloneqq \inf\left\{ \tunnelextent{\tau} : \tau \text{ $(4d,0)$-tunnel from }(\A,\Lip_\A)\text{ to }(\B,\Lip_\B) \right\} \text.
\end{equation*}
We thus define a complete metric up to full quantum isometry on the class of $(4d,0)$-{\qcms s}, as seen in \cite{Latremoliere13b,Latremoliere14,Latremoliere15}, and many examples of convergence for this metric are known. This includes all examples of convergence for the Gromov-Hausdorff distance \cite{Gromov} since  the propinquity, restricted to classical compact metric spaces $(X,d)$ identified with the {\qcms s} of the form $(C(X),\Lip_d)$ (where $\Lip_d$ is the Lipschitz seminorm induced by $d$), is topologically equivalent to the Gromov-Hausdorff distance.

In addition of the intrinsic value of proving a new convergence result for the propinquity, this section also lays the foundation for our work in the next sections. Our proof heavily borrows from our work on quantum and fuzzy tori in \cite{Latremoliere13c} and \cite{Latremoliere20a}, and we will refer to these work when appropriate.

\begin{remark}
	We could easily extend the results in this paper to limits at any point in $\Xi$, but we prefer to focus on the core example which includes all the important techniques.
\end{remark}

Let $(k,\sigma) \in \Xi$. For any $(z_1,\ldots,z_d) \in \U^d_k$, the *-automorphism $\alpha_{k,\sigma}^z$ extends to the *-automorphism $\alpha_{k,\sigma}^z\otimes \mathrm{id}_{\operatorname{CAR}_d}$ on $\A_{k,\sigma}\otimes\operatorname{CAR}_d$. By abuse of notation, we denote this extension still by $\alpha_{k,\sigma}^z$.

\subsection{Gauge Actions act by Isometries, and a Mean Value Theorem}

The proof of Equation (\ref{goal-prop-limit-eq}) follows the main argument of \cite{Latremoliere13c,Latremoliere21a}. In fact, it suffices to simply establish a few key properties, which we can then use and plug in our argument in \cite{Latremoliere13c} to get the stated continuity. We already showed that the dual action acts on $\A_{h,\varsigma}$ by quantum isometries for all $(h,\varsigma) \in \Xi$ in Lemma \ref{dual-iso-lemma}.

The main tool in \cite{Latremoliere05,Latremoliere13c} relate convolution with Fejer kernels and the metric properties.

\begin{notation}
	For any $z \in \T$, we define:
	\begin{equation*}
		\mathsf{len}(z) \coloneqq \min\{ |t| : t \in \R, z = \exp(it) \} \text. 
	\end{equation*}
	Note that $\mathsf{len}$ is a length function on the circle $\T$, and induces a length function by restriction on $\U^d_k$ for all $k\in\Pbar$. The associated distance function induces the usual topology on $\T$.
	
	We then define, for all $(z_1,\ldots,z_d) \in \T^d$:
	\begin{equation*}
		\mathsf{slen}(z_1,\ldots,z_d) \coloneqq \sum_{j=1}^d \mathsf{len}(z_j) \text.
	\end{equation*}
	The function $\mathsf{slen}$ is a length function on $\T^d$, inducing its usual topology. Note that it is \emph{not} the length function induced by the Riemannian metric on $\T^d$ which gives rise to $\Dirac_{\infty,\ldots,\infty}$.
\end{notation}

\begin{lemma}\label{compression-lemma}
	For all $(k,\sigma) \in \Xi$, and for all $\varepsilon > 0$, there exists an open neighborhood $\Omega$ of $(k,\sigma)$ in $\Xi$, a nonempty finite subset $S\subseteq \Z^d_k$, and a function $f \colon \T \to [0,\infty)$ such that, for all $(k,\sigma) \in \Xi$:
	\begin{enumerate}
		\item the range of $\alpha_{k,\sigma}^f$ is the subspace $\A_{k,\sigma}(S)$ of $\A_{k,\sigma}$,
		\item for all $a\in \dom{\Lip_{k,\sigma}}$, we have $\norm{a-\alpha_{k,\sigma}^f(a)}{\A_{k,\sigma}} \leq \varepsilon \Lip_{k,\sigma}(a)$.
	\end{enumerate}
\end{lemma}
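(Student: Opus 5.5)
The plan is to take $f$ to be a product of Fejér kernels, viewed as a function on $\T^d$ (the statement writes $\T$, but a function on $\T^d$ is needed so that $\alpha_{h,\varsigma}^f$ makes sense on $\U^d_h$). The point where the argument genuinely depends on $\varepsilon$ is $(k,\sigma)=(\infty,\sigma)$, so I treat that case. For a general $(k,\sigma)$, the finite coordinates of $k$ are isolated points of $\Pbar$, so they stay fixed on a small enough neighborhood. In those directions I use the trivial kernel $\equiv 1$, whose averaging is the conditional expectation onto the fixed-point algebra, and handle them by the same estimates.

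Fix $N\in\N_\ast$ and let $F_N(e^{it})=\sum_{|n|<N}(1-\frac{|n|}{N})e^{int}\geq 0$ be the Fejér kernel. Set $f(z_1,\ldots,z_d)=\prod_{j=1}^d F_N(z_j)$ and $S=\{m\in\Z^d:|m_j|<N \text{ for all } j\}$. Let $\Omega$ consist of those $(h,\varsigma)\in\Xi$ with $h_j>M$ for every $j$, where $M\geq 2N$ will be chosen below. For such $h$, the map $q_h$ is injective on $S$. There is also no aliasing: characters $z\mapsto z^m$ with $m\in S$ restrict to distinct characters of $\U^d_h$, and none of them is trivial unless $m=0$. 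Hence $\int_{\U^d_h} f\,d\mu_h=1$, so $\norm{f}{L^1(\U^d_h,\mu_h)}=1$ since $f\geq0$. Moreover $\alpha_{h,\varsigma}^f(W^{q_h(m)}_{h,\varsigma})=\widehat{f}(m)W^{q_h(m)}_{h,\varsigma}$ with $\widehat f(m)=\prod_j(1-|m_j|/N)$, which is nonzero on $S$. For $m\notin q_h(S)$ we get $\alpha_{h,\varsigma}^f(W^{m}_{h,\varsigma})=0$. By continuity and density of the span of the $W^m_{h,\varsigma}$, the range of $\alpha_{h,\varsigma}^f$ is exactly $\A_{h,\varsigma}(S)$, which proves (1).

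For (2) I first prove a mean value estimate: for all $(h,\varsigma)\in\Xi$, $z\in\U^d_h$ and $a\in\dom{\Lip_{h,\varsigma}}$,
\begin{equation*}
\norm{a-\alpha_{h,\varsigma}^z(a)}{\A_{h,\varsigma}}\leq 2\,\mathsf{slen}(z)\,\Lip_{h,\varsigma}(a)\text.
\end{equation*}
I first take $a\in\dom{\gradient_{h,\varsigma}}$ and move one coordinate at a time, using that each $\alpha^z_{h,\varsigma}$ is an isometry so the errors add up.
\begin{itemize}
\item In a finite direction, write $z_j=\lambda_{h,j}^p$ with $|p|\leq h_j/2$, so that $\mathsf{len}(z_j)=2\pi|p|/h_j$. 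Telescoping gives $\norm{(\alpha^j)^p(a)-a}{}\leq |p|\,\frac{2\pi}{h_j}\norm{\fpartial^j a}{}$ (or the same with $\bpartial^j$ when $p<0$). By \eqref{car-component-bound-eq}, this is at most $2\,\mathsf{len}(z_j)\norm{\gradient a}{}$.
\item In an infinite direction, the usual integral formula $\alpha^{(j\gets e^{it})}(a)-a=\int_0^t\alpha^{(j\gets e^{is})}(\partial^j a)\,ds$ gives the same bound.
\end{itemize}
The bound then passes to $\dom{\Lip_{h,\varsigma}}$ by taking norm-limits of the approximating sequences in the definition of $\Lip_{h,\varsigma}$; both sides behave well under these limits.

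Since $\int f\,d\mu_h=1$, we have $a-\alpha^f_{h,\varsigma}(a)=\int_{\U^d_h}f(z)\,(a-\alpha^z_{h,\varsigma}(a))\,d\mu_h(z)$. The mean value estimate then yields
\begin{equation*}
\norm{a-\alpha_{h,\varsigma}^f(a)}{}\leq 2\Lip_{h,\varsigma}(a)\int_{\U^d_h}f\,\mathsf{slen}\,d\mu_h=2d\,\Lip_{h,\varsigma}(a)\int_{\U_{h_1}}F_N(z)\,\mathsf{len}(z)\,d\mu_{h_1}(z)\text,
\end{equation*}
where the last equality holds by symmetry and by the product structure: the one-variable integrals of $F_N$ equal $1$, and the Haar measure is a product. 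I expect the main obstacle to be making this integral small uniformly in $h$.
\begin{enumerate}
\item Choose $N$ so that $\int_\T F_N\,\mathsf{len}\,d\mu_\infty<\varepsilon/(8d)$. This is possible because Fejér kernels form an approximate identity and $\mathsf{len}$ is continuous with $\mathsf{len}(1)=0$.
\item Then choose $M\geq 2N$ so that for every $h_j>M$ the discrete average of $F_N\,\mathsf{len}$ over the $h_j$-th roots of unity is within $\varepsilon/(8d)$ of its integral. This is a Riemann-sum estimate for the single continuous function $F_N\,\mathsf{len}$ on $\T$, hence uniform in $h_j\geq M$.
\end{enumerate}
With these choices, $\norm{a-\alpha^f_{h,\varsigma}(a)}{}\leq \varepsilon\Lip_{h,\varsigma}(a)$ on $\Omega$, which gives (2).
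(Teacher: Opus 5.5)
Your argument at $(\infty,\sigma)$ is correct and is essentially the paper's proof. You use the same coordinatewise mean value estimate $\norm{a-\alpha_{h,\varsigma}^z(a)}{\A_{h,\varsigma}}\leq 2\,\mathsf{slen}(z)\,\Lip_{h,\varsigma}(a)$, first on $\dom{\gradient_{h,\varsigma}}$ and then extended by approximation. You then average against a nonnegative kernel of integral one with finitely supported Fourier transform, and compare the integrals over $\U^d_h$ and $\T^d$ by a Riemann-sum argument. The paper gets the kernel and the Riemann-sum step by citing \cite{Latremoliere05}, whereas you build them from explicit Fejér kernels; this also gives you that the range is exactly $\A_{h,\varsigma}(S)$. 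One small slip: the $h_j$ may differ, so $\int_{\U^d_h}f\,\mathsf{slen}\,d\mu_h=\sum_{j=1}^d\int_{\U_{h_j}}F_N\,\mathsf{len}\,d\mu_{h_j}$, not $d$ times the first term. Your bounds control each summand uniformly, so the conclusion stands.

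The step that fails is your reduction at a point $(k,\sigma)$ with some finite $k_j\geq 2$. Taking the constant kernel $\equiv 1$ on $\U_{k_j}$ turns averaging in the $j$-th direction into the conditional expectation onto the fixed points of that direction, and this is not close to the identity. Concretely, take $a=W_{k,\sigma}^{e_j}$, with $e_j$ the $j$-th standard basis vector. Since $\int_{\U_{k_j}}z\,d\mu_{k_j}(z)=0$, you get $\alpha_{k,\sigma}^f(a)=0$, so $\norm{a-\alpha_{k,\sigma}^f(a)}{\A_{k,\sigma}}=1$. On the other hand, $\Lip_{k,\sigma}(a)\leq\norm{\gradient_{k,\sigma}a}{\A_{k,\sigma}\otimes\operatorname{CAR}_d}=\frac{k_j}{\pi}\sin\left(\frac{\pi}{k_j}\right)\leq 1$. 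Hence (2) fails for every $\varepsilon<1$. In your own estimate this appears as the term $2\int_{\U_{k_j}}\mathsf{len}\,d\mu_{k_j}$, which is of order $1$ and cannot be made small by any choice of $N$ or $M$.

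In a finite direction you should instead take the kernel equal to $k_j$ at $1$ and $0$ elsewhere on $\U_{k_j}$. The restriction of the Fejér kernel $F_{k_j}$ works: it is nonnegative on $\T$ and vanishes at the nontrivial $k_j$-th roots of unity. With this kernel:
\begin{itemize}
\item averaging is the identity in that direction;
\item it contributes nothing to $\int f\,\mathsf{slen}$, since $\mathsf{len}(1)=0$;
\item it only forces $S$ to contain all residues modulo $k_j$ in that coordinate, which is harmless on a neighborhood where $h_j=k_j$ is frozen.
\end{itemize}
For the record, the paper's own proof only produces a neighborhood of $(\infty,\sigma)$, which is the only case used afterwards.
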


\begin{proof}
Let $(k,\sigma) \in \Xi$. Let $z  \in \U_k$. In a useful departure from our usual notation, we write $z = (y_1,\ldots,y_d)$, and set, for each $j \in \{1,\ldots,d\}$:
\begin{equation*}
	z_j \coloneqq (j\gets y_j) =\big(1 ,\ldots, 1, \underbracket{y_j}_{\text{index }j}, 1, \ldots, 1 \big)\text.
\end{equation*}

For all $a \in \A_{k,\sigma}$, noting that $z = \prod_{j=1}^d z_j$, we then have:
\begin{align*}
	a - \alpha_{k,\sigma}^z(a)
	&= a - \alpha_{k,\sigma}^{z_1}\alpha_{k,\sigma}^{z_2} \cdots \alpha_{k,\sigma}^{z_d}(a) \\
	&= a - \alpha_{k,\sigma}^{z_1}(a) + \alpha_{k,\sigma}^{z_1}(a - \alpha_{k,\sigma}^{z_2}(a) ) + \alpha_{k,\sigma}^{z_1}\alpha_{k,\sigma}^{z_2}(a - \alpha_{k,\sigma}^{z_3}(a)) + \cdots \\
	&\quad  - \alpha_{k,\sigma}^{z_1}\alpha_{k,\sigma}^{z_2} \cdots \alpha_{k,\sigma}^{z_d}(a) \\
	&= a - \alpha_{k,\sigma}^{z_1}(a) + \sum_{j=2}^d \alpha_{k,\sigma}^{z_1} \cdots \alpha_{k,\sigma}^{z_{j-1}}(a - \alpha_{k,\sigma}^{z_j}(a)) \text.
\end{align*}
Therefore,
\begin{align*}
	\norm{a - \alpha_{k,\sigma}^z(a)}{\A_{k,\sigma}}
	&\leq \norm{a-\alpha_{k,\sigma}^{z_1}(a)}{\A_{k,\sigma}} + \sum_{j=2}^d \norm{\alpha_{k,\sigma}^{z_1}\cdots\alpha_{k,\sigma}^{z_{j-1}}(a-\alpha_{k,\sigma}^{z_j}(a))}{\A_{k,\sigma}} \\
	&=\sum_{j=1}^d \norm{\alpha_{k,\sigma}^{z_j}(a) - a}{\A_{k,\sigma}} \text{ since $\alpha_{k,\sigma}$ acts by *-automorphisms.}
\end{align*}

We now consider each term above separately. First, it will be helpful to note that if $a \in \dom{\gradient_{k,\sigma}}$, then for each $j \in \{1,\ldots,d\}$,
\begin{align*}
	\max\left\{\norm{\fpartial_{k,\sigma}^j a}{\A_{k,\sigma}},\norm{\bpartial_{k,\sigma}^j a}{\A_{k,\sigma}}\right\}
	&\leq2\norm{\gradient_{k,\sigma} a}{\A_{k,\sigma}\otimes\operatorname{CAR}_d} \\
	&=2\Lip_{k,\sigma}(a) \text{ by Theorem \ref{L-explicit-thm}.}
\end{align*}

Fix $j \in \{1,\ldots,d\}$. If $k_j = \infty$, then writing $y_j = \exp(i s_j \mathsf{len}(y_j) )$ with $s_j \in \{-1,1\}$, we see that if $a\in\dom{\gradient_{k,\sigma}}$ then:
\begin{align*}
	\norm{a-\alpha_{k,\sigma}^{z_j}(a)}{\A_{k,\sigma}}
	&= \norm{\int_0^{s_j \mathsf{len}(y_j)} \alpha_{k,\sigma}^{(j\gets \exp(i t))} \fpartial_{k,\sigma}^j(a) \, dt}{\A_{k,\sigma}} \\
	&\leq \int_0^{\mathsf{len}(y_j)} \norm{\fpartial_{k,\sigma}^j(a)}{\A_{k,\sigma}} \, dt \\
	&\leq 2\mathsf{len}(y_j)\Lip_{k,\sigma}(a) \text.
\end{align*}
If $a \in \dom{\Lip_{k,\sigma}}$ with $\Lip_{k,\sigma}(a)\leq 1$, then there exists $(a_n)_{n\in\N}$ in $\dom{\gradient_{k,\sigma}}$ with $\lim_{n\to\infty} a_n = a$ and 
\begin{equation*}
	\Lip_{k,\sigma}(a_n) = \norm{\gradient_{k,\sigma} a_n}{\A_{k,\sigma}\otimes\operatorname{CAR}_d} \leq 1
\end{equation*}
for all $n\in\N$. Therefore:
\begin{align*}
	\norm{a-\alpha_{k,\sigma}^{z_j}(a)}{\A_{k,\sigma}}
	&= \lim_{n\to\infty} \norm{a_n -\alpha_{k,\sigma}^{z_j}(a_n)}{\A_{k,\sigma}} \\
	&\leq \liminf_{n\to\infty} \; 2\mathsf{len}(y_j) \Lip_{k,\sigma}(a_n) \leq  2\mathsf{len}(y_j) \text.
\end{align*}
By homogeneity, for all $a\in \dom{\Lip_{k,\sigma}}$,
\begin{equation*}
\norm{a-\alpha_{k,\sigma}^{z_j}(a)}{\A_{k,\sigma}}\leq2\mathsf{len}(y_j) \Lip_{k,\sigma}(a) \text.
\end{equation*}

On the other hand, if $k_j < \infty$, choose the integer $q_j$ of smallest absolute value such that $y_j=\lambda_{k,j}^{q_j}$, and set $m\coloneqq|q_j|$. Thus $\mathsf{len}(y_j)=2\pi m/k_j$. Assume first $q_j\geq0$. Let $a \in \dom{\gradient_{k,\sigma}}$. We compute:
\begin{align*}
	\norm{a-\alpha_{k,\sigma}^{z_j} (a)}{\A_{k,\sigma}}
	&\leq \sum_{n = 0}^{m-1} \norm{\alpha_{k,\sigma}^{j \gets \lambda_{k,j}^n} (a)-\alpha_{k,\sigma}^{j \gets \lambda_{k,j}^{n+1}} (a)}{\A_{k,\sigma}} \\
	&\leq \sum_{n=0}^{m-1} \norm{a-\alpha_{k,\sigma}^{j\gets\lambda_{k,j}}(a)}{\A_{k,\sigma}} \\
	&= \frac{2\pi m}{k_j}\norm{\fpartial_{k,\sigma}^j(a)}{\A_{k,\sigma}} \\
	&\leq 2\mathsf{len}(y_j)  \Lip_{k,\sigma}(a) \text.
\end{align*}
If instead $q_j<0$, then by a similar computation using the backward derivative:
\begin{align*}
\norm{a-\alpha_{k,\sigma}^{z_j} (a)}{\A_{k,\sigma}} 
&\leq \sum_{n=0}^{m-1} \norm{a-\alpha_{k,\sigma}^{j\gets\overline{\lambda_{k,j}}}(a)}{\A_{k,\sigma}}\\
&=\frac{2\pi m}{k_j}\norm{\bpartial_{k,\sigma}^j(a)}{\A_{k,\sigma}}\\
&\leq2\mathsf{len}(y_j)  \Lip_{k,\sigma}(a) \text.
\end{align*}
Either way, we can extend these inequalities to any $a\in \dom{\Lip_{k,\sigma}}$ as in the case $k_j = \infty$.

Therefore, is $\operatorname{slen}(z) \coloneqq \sum_{j=1}^d \mathsf{len}(y_j)$, we have shown that if $a\in\dom{\Lip_{k,\sigma}}$, then
\begin{equation*}
	\norm{a-\alpha_{k,\sigma}^z(a)}{\A_{k,\sigma}} \leq  2\operatorname{slen}(z) \Lip_{k,\sigma}(a) \text.
\end{equation*}

It thus follows that if $f\colon \T\to[0,\infty)$ with $\int_{\U_k^d} f = 1$, then, for all $a \in \dom{\Lip_{k,\sigma}}$:
\begin{equation*}
	\norm{a - \alpha_{k,\sigma}^f(a)}{\A_{k,\sigma}} \leq \int_{\U_k^d} f(z) \norm{a-\alpha_{k,\sigma}^z(a)}{\A_{k,\sigma}} \,d\mu_k(z) \leq 2\Lip_{k,\sigma}(a) \int_{\U_k^d} f(z) \operatorname{slen}(z)\, d\mu_k(z) \text.
\end{equation*}
By \cite[Corollary 3.1]{Latremoliere05}, we may indeed choose a function $f \colon \T \to [0,\infty)$ which is a linear combination of characters of $\Z^d$ and such that 
\begin{equation*}
	\int_{\T^d} f(z) \operatorname{slen}(z)\,d\mu(z) < \frac{\varepsilon}{4}\text{ and }\int_{\T^d} f(z) \,d\mu(z) = 1 \text.
\end{equation*}
Now, we note that in particular, $\int_{\U_k^d} f(z)\, d\mu_k (z) = 1$ as well. By \cite[Lemma 3.6]{Latremoliere05}, we also note that there exists $N$ such that if $k\geq N$ then 
\begin{equation*}
	\left|\int_{\U^d_k} f(z) \operatorname{slen}(z) \,d\mu_k(z) - \int_{\T^d} f(z) \operatorname{slen}(z) \,d\mu(z)\right| < \frac{\varepsilon}{4}\text.
\end{equation*}
Altogether, we conclude that, for $k\geq N$:
\begin{equation*}
	\norm{a - \alpha_{k,\sigma}^f(a)}{\A_{k,\sigma}} \leq \varepsilon \Lip_{k,\sigma}(a) \text.
\end{equation*}
Moreover, as discussed in \cite[Lemma 3.2]{Latremoliere05}, the element $\alpha_{k,\sigma}^f(a)$ lies in $\A_{k,\sigma}(S)$.
\end{proof}

\subsection{Continuity of the L-seminorms in the natural parameter}

The last key ingredient is the continuity of the L-seminorms as the parameter varies, in the following sense. We recall from Notation \ref{integrated-rep-notation} that if $f : S\to \C$ is a function over a finite subset $S$ of $\Z^d$, and if $(k,\sigma) \in \Xi$, then $f_{k,\sigma} \coloneqq \sum_{m \in S} f(m) W_{k,\sigma}^{q_k(m)} \in \A_{k,\sigma}(S)$, where $q_k : \Z^d\to \Z^d_k$ is the canonical surjection.

\begin{lemma}\label{continuous-field-L-lemma}
	For all finite subset $S\subseteq \Z^d$, for all $(k,\sigma) \in \Xi$ such that $q_k$ restricts to an injection on $S$, and for all $f :S\to\C$,
	\begin{equation*}
		\lim_{\substack{(y,\varsigma)\to (k,\sigma) \\ (y,\varsigma) \in \Xi }} \Lip_{y,\varsigma}(f_{y,\varsigma}) = \Lip_{k,\sigma}(f_{k,\sigma}) \text.
	\end{equation*}
\end{lemma}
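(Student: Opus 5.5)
Since $f_{y,\varsigma}$ is a finite linear combination of the unitaries $W^{q_y(m)}_{y,\varsigma}$, it lies in $\dom{\gradient_{y,\varsigma}}$. By Theorem \ref{L-explicit-thm}, or directly from the definition of $\Lip_{y,\varsigma}$ on $\dom{\gradient_{y,\varsigma}}$,
\begin{equation*}
\Lip_{y,\varsigma}(f_{y,\varsigma})=\norm{\gradient_{y,\varsigma}f_{y,\varsigma}}{\A_{y,\varsigma}\otimes\operatorname{CAR}_d}.
\end{equation*}
The plan is to write this gradient as the image of an explicit $\operatorname{CAR}_d$-valued function on $S$ whose coefficients depend continuously on $y$. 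Then the lemma reduces to continuity of norms in a continuous field of twisted group C*-algebras.

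\emph{Step 1: explicit gradient.} Since $\alpha^{j}_{y,\varsigma}(W^{m}_{y,\varsigma})=\lambda_{y,j}^{m_j}W^m_{y,\varsigma}$, for $y_j<\infty$ we get
\begin{equation*}
\fpartial^j_{y,\varsigma}W^{q_y(m)}_{y,\varsigma}=\tfrac{y_j}{2\pi}\left(e^{2i\pi m_j/y_j}-1\right)W^{q_y(m)}_{y,\varsigma},\qquad \bpartial^j_{y,\varsigma}W^{q_y(m)}_{y,\varsigma}=\tfrac{y_j}{2\pi}\left(1-e^{-2i\pi m_j/y_j}\right)W^{q_y(m)}_{y,\varsigma}.
\end{equation*}
For $y_j=\infty$, both equal $i m_j W^{q_y(m)}_{y,\varsigma}$. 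Hence
\begin{equation*}
\gradient_{y,\varsigma}f_{y,\varsigma}=\sum_{m\in S}f(m)\,W^{q_y(m)}_{y,\varsigma}\otimes c_m(y),
\end{equation*}
where
\begin{equation*}
c_m(y)\coloneqq\sum_{j=1}^d\left(\phi_{y_j}(m_j)\car_j-\overline{\phi_{y_j}(-m_j)}\,\car_j^*\right)\in\operatorname{CAR}_d,
\end{equation*}
with $\phi_{t}(n)=\frac{t}{2\pi}(e^{2i\pi n/t}-1)$ for $t<\infty$ and $\phi_\infty(n)=in$. The function $t\mapsto\phi_t(n)$ is continuous on $\Pbar$ for each fixed $n$, since $\frac{t}{2\pi}(e^{2i\pi n/t}-1)\to in$. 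On the finite coordinates of $k$, $y_j$ is eventually equal to $k_j$ near $k$. So $y\mapsto c_m(y)$ is continuous at $k$ for each of the finitely many $m\in S$.

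\emph{Step 2: freeze the coefficients.} Set $g_{y,\varsigma}\coloneqq\sum_{m\in S}f(m)W^{q_y(m)}_{y,\varsigma}\otimes c_m(k)$. Since each $W\otimes 1$ is unitary,
\begin{equation*}
\left|\norm{\gradient_{y,\varsigma}f_{y,\varsigma}}{}-\norm{g_{y,\varsigma}}{}\right|\leq\sum_{m\in S}|f(m)|\,\norm{c_m(y)-c_m(k)}{\operatorname{CAR}_d}\to0.
\end{equation*}
It therefore suffices to prove that $(y,\varsigma)\mapsto\norm{g_{y,\varsigma}}{}$ is continuous at $(k,\sigma)$.

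\emph{Step 3: continuous field (main obstacle).} Expand each $c_m(k)$ in the canonical unitaries of $\operatorname{CAR}_d\cong C^*(\Z_2^{2d},\mathfrak c)$, using Expression \eqref{car-field-cocycle-eq}. Then $\A_{y,\varsigma}\otimes\operatorname{CAR}_d\cong C^*(\Z^d_y\times\Z_2^{2d},\varsigma\times\mathfrak c)$, and $g_{y,\varsigma}$ becomes $h_{(y,\varsigma)}$ for one fixed finitely supported function $h$ on $\Z^d\times\Z_2^{2d}$, independent of $(y,\varsigma)$. The parameters $(y\times(2,\dots,2),\varsigma\times\mathfrak c)$ range continuously in the analogous parameter space for $\Z^{d+2d}$ quotients. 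I would then invoke the continuity of the field of twisted group C*-algebras over $\Xi$ established in \cite{Latremoliere21a}: for a fixed finitely supported $h$, the norm of its image depends continuously on the parameter. Upper semicontinuity comes from the universal property of the full algebras. Lower semicontinuity comes from amenability, via the regular representation and pointwise convergence of the cocycles on the finite set $S$.

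This step is where the real content lies. If the cited result is only stated for $\Z^d$-type parameters, I would redo its argument directly. Lower semicontinuity follows by testing $g_{y,\varsigma}$ against finitely supported vectors in $\ell^2(\Z^d_y)\otimes\Spinor_d$, which are transported along $q_y$ for $y$ near $k$. Upper semicontinuity follows by compressing with Fejér-type approximate units as in Lemma \ref{compression-lemma}, or by a compactness argument over states. Combining this with Step 2 gives the limit.
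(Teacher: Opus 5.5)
Your proposal is correct and follows essentially the paper's route. Both arguments compute $\gradient_{y,\varsigma}f_{y,\varsigma}$ explicitly with coefficients continuous in $y$, then conclude from continuity of norms in the continuous field $\left(C^\ast(\Z^d_y,\varsigma)\otimes\operatorname{CAR}_d\right)_{(y,\varsigma)\in\Xi}$, realized through twisted group C*-algebras of $\Z^d_y\times\Z_2^{2d}$. The paper builds a single continuous section with $y$-dependent coefficients on the groupoid $\mathscr{G}$ via \cite[Theorem 2.6]{Latremoliere05}, whereas you first freeze the coefficients; this is an inessential variation.

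One small slip: in $c_m(y)$ the coefficient of $\car_j^*$ should be $-\phi_{y_j}(-m_j)=\frac{y_j}{2\pi}\left(1-e^{-2i\pi m_j/y_j}\right)$, not $\overline{\phi_{y_j}(-m_j)}=\phi_{y_j}(m_j)$. The corrected coefficient is still continuous in $y_j$ with limit $im_j$, so nothing else in your argument changes.
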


\begin{proof}
	Let
	\begin{equation*}
		\mathscr{G} \coloneqq \coprod_{(k,\sigma) \in \Xi} \left(\Z^d_k \times \Z_2^{2d}\right) \text,
	\end{equation*}
	endowed with the algebraic structure of a groupoid as a bundle of discrete groups. We topologize $\mathscr{G}$ using the final topology for the quotient map:
	\begin{equation*}
		q : (k,\sigma,m,p) \in \Xi\times\Z^d\times\Z_2^{2d} \mapsto (k,\sigma,q_k(m),p) \text,
	\end{equation*}
	where $q_k : \Z^d\twoheadrightarrow \Z^d_k$ is the canonical surjection for each $k \in \Pbar^d$. We note that $q$ is an open map as well. Since $q$ is a surjection, it suffices to show that for all open subsets $U$ of $\Xi$, and for all $n \in \Z^d$ and $p \in \Z_2^{2d}$, the set $q^{-1}(q(U\times\{n\}\times\{p\}))$ is open in the product $\Xi\times\Z^d\times\Z_2^{2d}$ (as such sets form a topological basis for the product topology). 
	
	Let thus $(k,\sigma,m,p) \in q^{-1}(q(U\times\{n\}\times\{p\}))$. Write $n = (n_1,\ldots,n_d)$. By definition, $(k,\sigma) \in U$, which is open in $\Xi$. In particular, there exists an open neighborhood $U_1$ of $k$ in $\Pbar^d$ and an open neighborhood $U_2$ of $\sigma$ in $\mathcal{C}_2(\Z^d)$ such that $U_1\times U_2 \subseteq U$. In turn, there exist open neighborhood $U_{1,1}$, \ldots, $U_{1,d}$ of, respectively, $k_1$, \ldots, $k_d$ in $\Pbar$ such that $U_{1,1}\times \cdots \times U_{1,d} \subseteq U_1$. Now, for each $j \in \{1,\ldots,d\}$ such that $k_j < \infty$, we simply restrict $U_{1,j}$ to $\{ k_j \}$, which indeed, is an open neighborhood of $k_j$ in $\Pbar$.
	 
	 We now set:
	\begin{multline*}
		V = \Bigg\{ ( y , \varsigma , m , p ) \in U \times \Z^d \times \{p\} : \eta \in U_2\text, \\
		 y = (y_1,\ldots,y_d), m = (m_1,\ldots,m_d), \text{ such that } \\
		 \begin{cases}
		 	y_j = k_j, m_j \in n_j + k_j \Z \text{ if }k_j < \infty \text, \\
		 	y_j  \in U_{1,j} , m_j = n_j \text{ if } k_j = \infty 
		\end{cases} \Bigg\}\text.
	\end{multline*} 
	 By construction, $(k,\sigma,m,p) \in V$. Moreover, if $(y,\varsigma,m,p) \in V$, then 
	 \begin{equation*}
	 	q(y,\varsigma,m,p) = (y,\varsigma,q_y(m),p) = (y,\varsigma,q_y(n),p) = q(y,\varsigma,n,p) \in q(U\times\{n\}\times\{p\})\text.
	 \end{equation*}
	 Thus $V\subseteq q^{-1}(q(U\times\{n\}\times\{p\}))$ is an open neighborhood of $(k,\sigma,m,p)$. Hence, as claimed, $q^{-1}(q(U\times\{n\}\times\{p\}))$, being an open neighborhood of each of its point, is itself open. So $q$ is indeed an open map.
	 
	 As a result of $q$ being an open surjection, we conclude that a sequence $(g_n)_{n\in\N}$ converges to $g$ in $\mathscr{G}$ if, and only if, there exists $w \in \Xi\times\Z^d\times\Z_2^{2d}$ such that $q(w) =  g$, and there exists a subsequence $(g_{f(n)})_{n\in\N}$ of $(g_n)_{n\in\N}$ and a sequence $(w_n)_{n\in\N}$ in $\Xi\times\Z^d\times\Z_2^{2d}$ such that $q(w_n) = g_{f(n)}$ and $(w_n)_{n\in\N}$ converges to $w$. Since $\mathscr{G}$ is first countable as easily checked, this fully characterizes the topology on $\mathscr{G}$. In particular, endowing each fiber with the counting measure, the groupoid $\mathscr{G}$ is a topological (locally compact) groupoid.

	The set of composable pairs for our groupoid $\mathscr{G}$ is:
	\begin{equation*}
		\mathscr{G}^{(2)} = \left\{ ((k,\sigma,m,p),(y,\eta,m',p')) \in \mathscr{G}^2 : (k,\sigma) = (y,\eta) \right\}\text.
	\end{equation*}
	By \cite[Theorem 2.6]{Latremoliere05}, if we define the continuous $2$-cocycle $\beta$ of $\mathscr{G}$ by setting, for all $((k,\sigma,m,p), (k,\sigma,m',p')) \in \mathscr{G}^{(2)}$:
	\begin{equation*}
		\beta( (k,\sigma,m,p), (k,\sigma,m',p')) = \sigma(m,m') \mathfrak c(p,p')
	\end{equation*}
	where $\mathfrak c$ is defined via Equation \eqref{car-field-cocycle-eq}, then $\C^\ast(\mathscr{G},\beta)$ is the C*-algebra of continuous sections for the continuous field of C*-algebras $\left( \C^\ast(\Z^d_k,\sigma)\otimes\operatorname{CAR}_d\right)_{(k,\sigma) \in \Xi}$.
	
	We now fix $(k,\sigma) \in \Xi$, and $f : S \to \C$, where $S\subseteq\Z^d$ is a finite, nonempty subset of $\Z^d$ such that $q_k$ restricts to an injection on $S$. Let $\Omega\subseteq\Xi$ be an open neighborhood of $(k,\sigma)$ such that, if $(y,\varsigma) \in \Omega$, then $q_y$ also restricts to an injection on $S$. Practically, it suffices that the entries of $k$ and $y$ simply are larger than all the entries of all the elements in $S$ in absolute value.
	
	For each $(y,\varsigma) \in \Omega$ and $m \in q_y(S)$, we let
	\begin{align*}
		p_j(f)(y,m)&=\begin{cases}
			i m_j f\circ ({q_y}_{|S})^{-1}(m) &\text{if }y_j = \infty,\\
			\frac{y_j}{2 \pi}\left(\exp\left(\frac{2\pi i m_j}{y_j}\right)-1\right)f\circ ({q_y}_{|S})^{-1}(m)&\text{if }y_j < \infty,
		\end{cases}\\
		q_j(f)(y,m)&=\begin{cases}
			i m_j f\circ ({q_y}_{|S})^{-1}(m)&\text{if }y_j = \infty,\\
			\frac{y_j}{2 \pi}\left(1-\exp\left(-\frac{2\pi i m_j}{y_j}\right)\right)f\circ ({q_y}_{|S})^{-1}(m)&\text{if }y_j < \infty.
		\end{cases}
	\end{align*}
	Choose a continuous function $\chi$ supported in $\Omega$ and equal to $1$ on a neighborhood of $(k,\sigma)$. Viewing $\car_j$ and $\car_j^*$ as finitely supported coefficient functions on $\Z_2^{2d}$, define the following compactly supported function over our groupoid $\mathscr{G}$:
	\begin{equation*}
		h(y,\varsigma,m,p)
		=\chi(y,\varsigma)\sum_{j=1}^d\left(p_j(f)(y,m)\car_j(p)-q_j(f)(y,m)\car_j^*(p)\right),
	\end{equation*}
	with value $0$ when $m\notin q_y(S)$.
	
	Since $S$ is finite, the components $p_j(f)(y,m)$ and $q_j(f)(y,m)$ depend continuously on $y \in\Nbar^d$. By appealing to our sequential characterization of the final topology on $\mathscr{G}$, any converging sequence in $\mathscr{G}$ lifts locally to a sequence in the domain whose discrete components are eventually constant. 
	
	By construction, $h$ is a continuous function over $\mathscr{G}$, and an element of $\C^\ast(\mathscr{G},\beta)$. Moreover, on the neighborhood where $\chi=1$, the image of $h$ in the quotient over $(y,\varsigma)$ is, by construction:
	\begin{equation*}
		\sum_{j=1}^d\left(\fpartial_{y,\varsigma}^j(f_{y,\varsigma})\otimes\car_j-\bpartial_{y,\varsigma}^j(f_{y,\varsigma})\otimes\car_j^*\right) = \gradient_{y,\varsigma} f_{y,\varsigma} \text.
	\end{equation*}
	Hence, the continuous field property applies to show that (using Theorem \ref{L-explicit-thm}):
	\begin{equation*}
		\Lip_{y,\varsigma}(f_{y,\varsigma}) = \norm{\gradient_{y,\varsigma} f_{y,\varsigma}}{\A_{y,\varsigma}\otimes\operatorname{CAR}_d} \xrightarrow{(y,\varsigma)\to (k,\sigma)} \norm{\gradient_{k,\sigma} f_{k,\sigma}}{\A_{k,\sigma}\otimes\operatorname{CAR}_d} = \Lip_{k,\sigma}(f_{k,\sigma}) \text.
	\end{equation*}
	We thus have shown the desired pointwise convergence of the L-seminorms, as stated.
\end{proof}

\subsection{Continuity for the propinquity}

We are now ready to apply \cite{Latremoliere21a}, using Lemmas \ref{dual-iso-lemma}, \ref{compression-lemma} and \ref{continuous-field-L-lemma}.

\begin{theorem}\label{prop-cv-thm}
For all normalized $2$-cocycle $\sigma$ of $\Z^d$, the following limit holds:
	\begin{equation*}
		\lim_{\substack{(k,\varsigma)\to (\infty,\sigma) \\ (k,\varsigma) \in \Xi}} \dpropinquity{4d,0}((\A_{k,\varsigma},\Lip_{k,\varsigma}),(\A_{\infty,\sigma},\Lip_{\infty,\sigma})) = 0 \text.
	\end{equation*}
\end{theorem}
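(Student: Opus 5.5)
We will follow the strategy of \cite{Latremoliere13c,Latremoliere21a}: for $(k,\varsigma)$ close to $(\infty,\sigma)$ we build an explicit $(4d,0)$-tunnel between $(\A_{k,\varsigma},\Lip_{k,\varsigma})$ and $(\A_{\infty,\sigma},\Lip_{\infty,\sigma})$. The tunnel will come from a bridge whose pivot lives in a common finite-dimensional model, and the extent will be controlled by an arbitrary $\varepsilon>0$.

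\textbf{Step 1: compressing both sides to a common finite support.} Fix $\varepsilon>0$.
\begin{enumerate}
\item Apply Lemma \ref{compression-lemma} at $(\infty,\sigma)$. This gives a neighborhood $\Omega$, a finite $S\subseteq\Z^d$ and a kernel $f\geq 0$ with $\int f=1$, a trigonometric polynomial. For all $(k,\varsigma)\in\Omega$, the map $\alpha^f_{k,\varsigma}$ has range in $\A_{k,\varsigma}(S)$ and satisfies $\norm{a-\alpha^f_{k,\varsigma}(a)}{}\leq\varepsilon\Lip_{k,\varsigma}(a)$.
\item Shrink $\Omega$ so that $q_k$ is injective on $S$. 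Then $\A_{k,\varsigma}(S)$ and $\A_{\infty,\sigma}(S)$ are both identified with $C(S)$ through $g\mapsto g_{k,\varsigma}$.
\item By Corollary \ref{convolution-L-cor} together with Theorem \ref{L-explicit-thm}, the map $\alpha^f$ is a contraction for $\Lip$ on both sides.
\item Since $f$ is positive with integral $1$, $\alpha^f$ is also a unital positive contraction in norm. It acts on the Fourier coefficients $g$ by multiplication by $\hat f$.
\end{enumerate}

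\textbf{Step 2: uniform comparison of norms and L-seminorms on $C(S)$.} Lemma \ref{continuous-field-L-lemma} gives pointwise convergence $\Lip_{k,\varsigma}(g_{k,\varsigma})\to\Lip_{\infty,\sigma}(g_{\infty,\sigma})$. The continuous field structure $C^*(\mathscr G,\beta)$ from its proof gives the same pointwise convergence for the norms. Because $C(S)$ is finite dimensional and all the quantities involved are seminorms, we upgrade to uniform convergence on bounded sets: pointwise convergence of seminorms on a finite-dimensional space, combined with equicontinuity coming from the bound by $\sum_m|g(m)|$ times a constant uniform in $(k,\varsigma)$, gives uniform convergence on compact sets. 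Hence, for $(k,\varsigma)$ close enough to $(\infty,\sigma)$,
\[ |\Lip_{k,\varsigma}(g_{k,\varsigma})-\Lip_{\infty,\sigma}(g_{\infty,\sigma})|+|\norm{g_{k,\varsigma}}{}-\norm{g_{\infty,\sigma}}{}|\leq\varepsilon\left(\Lip_{\infty,\sigma}(g_{\infty,\sigma})+\norm{g_{\infty,\sigma}}{}\right), \]
with a similar control on products needed for the Leibniz property.

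\textbf{Step 3: building the tunnel and bounding its extent.} We take $\D=\A_{k,\varsigma}\oplus\A_{\infty,\sigma}$ and define
\[ \Lip_\D(a,b)=\max\Bigl\{\Lip_{k,\varsigma}(a),\Lip_{\infty,\sigma}(b),\tfrac1{\varepsilon'}\norm{\alpha^f(a)-\Phi(\alpha^f(b))}{}+\text{bridge terms}\Bigr\}, \]
where $\Phi$ transports $g_{\infty,\sigma}\mapsto g_{k,\varsigma}$. This is the bridge/treillis construction of \cite{Latremoliere13c,Latremoliere21a}, with Lemma \ref{dual-iso-lemma} supplying the isometric dual actions that the argument uses. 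We then need three things:
\begin{enumerate}
\item the two coordinate projections are quantum isometries, which follows from Steps 1 and 2 by lifting $a$ to $(a,\Phi(\alpha^f(\cdot))\ldots)$;
\item $\Lip_\D$ is $(4d,0)$-Leibniz, which follows from Lemma \ref{L-Leibniz-lemma} together with the standard estimate for bridge terms;
\item the extent is $O(\varepsilon)$, which comes from the compression estimate of Step 1 and the near-isometry of Step 2.
\end{enumerate}

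\textbf{Expected main obstacle.} The hardest part is Step 2: obtaining uniform rather than pointwise control, and making the Leibniz constant of the bridge seminorm stay exactly $(4d,0)$. This is the reason we rely on the general continuity theorem of \cite{Latremoliere21a}, whose hypotheses are precisely Lemmas \ref{dual-iso-lemma}, \ref{compression-lemma} and \ref{continuous-field-L-lemma}. Invoking it directly yields the limit in the statement.
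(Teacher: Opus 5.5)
Your proposal is essentially the paper's proof: the paper also obtains this theorem by running the continuity argument of \cite[Theorem 5]{Latremoliere13c} (as discussed in \cite{Latremoliere21a}), with Lemmas \ref{dual-iso-lemma}, \ref{compression-lemma} and \ref{continuous-field-L-lemma} as its inputs. The one correction concerns your sketch of Step 3. There, the tunnel is $\A_{k,\varsigma}\oplus\A_{\infty,\sigma}$ with link term $\frac{1}{\varepsilon}\opnorm{\pi_{k,\varsigma}(a)R_\varepsilon-R_\varepsilon\pi_{\infty,\sigma}(b)}{}{\ell^2(\Z^d)}$, built from a norm-one pivot $R_\varepsilon$ and faithful representations on $\ell^2(\Z^d)$; this term is automatically $(1,0)$-Leibniz, whereas your term $\norm{\alpha^f(a)-\Phi(\alpha^f(b))}{}$ has no reason to be, since neither $\alpha^f$ nor $\Phi$ is multiplicative. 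As a result, the $(4d,0)$ constant you flag as the main obstacle comes for free, and the actual work lies in the estimates $\opnorm{[R_\varepsilon,\pi_{k,\varsigma}(a)]}{}{\ell^2(\Z^d)}\leq\varepsilon\Lip_{k,\varsigma}(a)$ and $|\Lip_{\infty,\sigma}(a)-\Lip_{k,\varsigma}(a)|\leq\varepsilon\Lip_{\infty,\sigma}(a)$ for compressed elements $a\in\alpha_{k,\varsigma}^{f_\varepsilon}(\A_{k,\varsigma})$.
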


This theorem directly follows the argument laid in \cite[Theorem 5]{Latremoliere13c}, also discussed in \cite{Latremoliere21a}. For our purpose, we only record the ingredients which we need. We denote, for all $h \in \Pbar^d$ and $S\subseteq\Z^d_h$, the subspace $\{ \xi \in \ell^2(\Z^d_h) : \forall n \notin S \quad \xi(n) = 0 \}$ by $\ell^2(\Z^d_h | S)$.

For all $\sigma\in\mathscr{C}_2(\Z^d)$, and for all $\varepsilon > 0$, there exists, by \cite{Latremoliere13c},
\begin{itemize}
 	\item a finite nonempty subset $S_\varepsilon \subseteq \Z^d$, 
 	\item a function $f_\varepsilon \in C(\T^d)$ whose Fourier transform is supported on $S_\varepsilon$ and such that $\int_{\T^d} f_\varepsilon(z)\operatorname{slen}(z)\,d\mu_k(z) < \frac{\varepsilon}{4}$, 
 	\item an open neighborhood $\Omega_\varepsilon$ of $(\infty,\sigma)$ in $\Xi$,
 	\item for all $(k,\varsigma) \in \Omega_\varepsilon$, a nondegenerate faithful *-representation $\pi_{k,\varsigma}$ of $\A_{k,\varsigma}$ on $\ell^2(\Z^d)$, and a linear isometry $\vartheta_{k} : \ell^2(\Z^d_k) \to \ell^2(\Z^d)$ 
 	\item a trace-class operator $R_{\varepsilon}$ on $\ell^2(\Z^d)$ of norm $1$,
\end{itemize}
such that, for all $(k,\varsigma) \in \Omega_\varepsilon$,
\begin{itemize}
	\item for all $a \in \A_{k,\varsigma}$
 	\begin{equation*}
 		\pi_{k,\varsigma}(a) \vartheta_{k} = \vartheta_{k} a
 	\end{equation*} 
 	\item $\vartheta_{k} \ell^2(\Z^d_k | S_\varepsilon) = \ell^2(\Z^d | S_\varepsilon)$,
 	\item if $a \in \alpha_{k,\varsigma}^{f_\varepsilon} (\A_{k,\varsigma})$, then $\opnorm{[R_{\varepsilon}, \pi_{k,\varsigma}(a)]}{}{\ell^2(\Z^d)} \leq \varepsilon \Lip_{k,\varsigma}(a)$,
 	\item if $a \in \alpha_{k,\varsigma}^{f_\varepsilon}(\A_{k,\varsigma})$, then $|\Lip_{\infty,\sigma}(a) - \Lip_{k,\varsigma}(a)| \leq \varepsilon \Lip_{\infty,\sigma}(a)$,
 	\item the 4-tuple $(\B(\ell^2(\Z^d)), \pi_{\infty,\sigma}, \pi_{k,\varsigma}, R_{\varepsilon})$ is a bridge of length at most $\varepsilon$.
\end{itemize}
By \cite{Latremoliere13b}, we thus will work with the tunnel, for each $(k,\varsigma) \in \Omega_\varepsilon$:
\begin{equation*}
	\tau_{k,\varsigma,\varepsilon} \coloneqq \left( \A_{k,\varsigma}\oplus \A_{\infty,\sigma}, \Lip[T]_{k,\varsigma,\varepsilon}, j_{k,\varsigma}, j_{\infty,\sigma} \right)
\end{equation*}
where
\begin{equation*}
	\Lip[T]_{k,\varsigma,\varepsilon}(a,b) \coloneqq \max\left\{ \Lip_{k,\varsigma}(a), \Lip_{\infty,\sigma}(b), \frac{1}{\varepsilon}\opnorm{\pi_{k,\varsigma}(a) R_{\varepsilon} - R_{\varepsilon} \pi_{\infty,\sigma}(b)}{}{\ell^2(\Z^d)} \right\} \text,
\end{equation*}
while $j_{k,\varsigma} : (a,b)\in\A_{k,\varsigma}\oplus \A_{\infty,\sigma}\mapsto a$ and $j_{\infty,\sigma}:(a,b)\in\A_{k,\varsigma}\oplus \A_{\infty,\sigma}\mapsto b$.

This tunnel $\tau_{k,\varsigma,\varepsilon}$ has extent at most $\varepsilon$.

In the last section of this work, we will start from this tunnel to build a tunnel between our twisted spectral triples, which justifies why we give these details. The complete proof this tunnel works is the entire purpose of \cite{Latremoliere13c}.

We now turn to convergence for the spectral propinquity adjusted to include the convergence of the twist.

\section{The Spectral Propinquity for twisted spectral triples}

The \emph{spectral propinquity} \cite{Latremoliere18g} is a distance function on the class of  metric spectral triples, up to unitary equivalence, obtained by applying the more general \emph{covariant metrical propinquity} \cite{Latremoliere18d, Latremoliere18g} to a specific action of the monoid $[0,\infty)$ on specific metrical C*-correspondences constructed from spectral triples. Our extension of the spectral propinquity to twisted spectral triples follow this approach and thus, is reasonably straightforward. We nonetheless have to check that our construction does lead to a distance function up to unitary equivalence, including for the twist. Moreover, a welcome surprise arises: at the very light cost of generalizing metrical C*-correspondences, we can work with twisted spectral triples with unbounded twists, even if the resulting {\qcms} is \emph{relaxed}, i.e. does not satisfy any Leibniz inequality. 

We thus extend the spectral propinquity to the following class of twisted metric spectral triples.

\subsection{Metric Twisted Spectral Triples}

We begin from Definition \ref{twisted-def} of a twisted spectral triple, and add appropriate regularity conditions and a relaxed {\qcms} structure.

\begin{definition}\label{metric-twisted-def}
	A \emph{$\kappa$-metric twisted spectral triple} $(\A,\Hilbert,\Dirac,\rho)$, for $0 < \kappa\leq 1$, is a twisted spectral triple such that
	\begin{enumerate}
	\item if we define
	\begin{equation*}
		\forall a \in \A_{\Dirac} \quad \Lip(a) \coloneqq \max\left\{
		\opnorm{[\Dirac,a]_\rho}{}{\Hilbert},
		\kappa\opnorm{\rho(a)-a}{}{\Hilbert}
		\right\}
	\end{equation*}
	then the {\MongeKant} $\Kantorovich{\Lip}$ metrizes the weak* topology on $\StateSpace(\A)$,
		\item $\rho$ has a closed graph for the product of the topology of the norm on $\A$ with the weak operator topology on $\B(\Hilbert)$,
	\item $\rho(1) = 1$.
	\end{enumerate}
\end{definition}

The second term records, in the metric data themselves, how far the twist is from the identity. Notice that it gives
\begin{equation}\label{augmented-twist-bound-eq}
	\opnorm{\rho(a)}{}{\Hilbert}
	\leq\norm{a}{\A}+\kappa^{-1}\Lip(a)
	\leq(1+\kappa^{-1})\max\{\norm{a}{\A},\Lip(a)\}.
\end{equation}

\begin{remark}
	To be clear, we require in (2) that our twist has the following closure property: if $(a_n)_{n\in\N}$ converges in norm to $a$ in $\A$, and if there exists $T :\Hilbert\to\Hilbert$ such that, for all $\xi,\eta \in \Hilbert$, the sequence $(\inner{\rho(a_n)\xi}{\eta}{\Hilbert})_{n\in\N}$ converges to $\inner{T\xi}{\eta}{\Hilbert}$, then $\rho(a) = T$.
\end{remark}

\begin{remark}
	Expression \eqref{augmented-twist-bound-eq} implies that $\rho$ is continuous from $(\dom{\Lip},\max\{\Lip,\norm{\cdot}{\A}\})$ to the norm topology on $\B(\Hilbert)$.
\end{remark}

\begin{remark}
	We do not require any Leibniz property in Definition \ref{metric-twisted-def}. This is a surprise to us: as we shall see, in this particular setting, we can circumvent this, unlike the general situation for {\qcms s}. In particular, the space $\A_{\Dirac}$ of a metric twisted spectral triple need not be a subalgebra.
\end{remark}

\begin{proposition}
	If $(\A,\Hilbert,\Dirac,\rho)$ is a twisted spectral triple satisfying Conditions (1) and (3) of Definition \ref{metric-twisted-def}, and if $\rho$ is continuous from $\A$ to the algebra of bounded linear operators on $\Hilbert$ endowed with the weak operator topology, then $(\A,\Hilbert,\Dirac,\rho)$ is a metric twisted spectral triple.
\end{proposition}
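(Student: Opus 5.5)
Only Condition (2) of Definition \ref{metric-twisted-def} needs checking, since Conditions (1) and (3) are assumed. The plan is to show that continuity of $\rho$ into the weak operator topology forces its graph to be closed in the product of the norm topology on $\A$ and the weak operator topology on $\B(\Hilbert)$. This is the standard fact that a continuous map into a Hausdorff space has a closed graph. We apply it to $\rho$ viewed on its domain $\A_{\Dirac}$ with the norm topology inherited from $\A$.

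Concretely, let $(a_n)_{n\in\N}$ be a sequence in the domain of $\rho$ converging in norm to some $a$. Suppose there is $T\in\B(\Hilbert)$ with $\inner{\rho(a_n)\xi}{\eta}{\Hilbert}\to\inner{T\xi}{\eta}{\Hilbert}$ for all $\xi,\eta\in\Hilbert$. First, $a$ lies in the domain of $\rho$. This is automatic when $\rho$ is continuous on all of $\A$, as the hypothesis literally states. If one only knows continuity on $\A_{\Dirac}$, one reads the hypothesis as saying $\rho$ is defined and continuous on the closure in question. Continuity then gives $\inner{\rho(a_n)\xi}{\eta}{\Hilbert}\to\inner{\rho(a)\xi}{\eta}{\Hilbert}$ for every $\xi,\eta$. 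Since limits of scalar sequences are unique, $\inner{\rho(a)\xi}{\eta}{\Hilbert}=\inner{T\xi}{\eta}{\Hilbert}$ for all $\xi,\eta$. Hence $\rho(a)=T$, which is exactly the closure property spelled out in the remark following Definition \ref{metric-twisted-def}.

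Sequences suffice here because the closed-graph condition in Definition \ref{metric-twisted-def} is the one formulated in that remark with sequences. If one insists on nets in the product topology, the same argument works verbatim with nets, since the weak operator topology is Hausdorff.

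The only potential subtlety, and the main obstacle, is the domain issue. Definition \ref{twisted-def} defines $\rho$ only on the dense subspace $\A_{\Dirac}$, while the hypothesis speaks of $\rho$ being continuous from $\A$. The intended reading must be that $\rho$ is defined on $\A$, or at least on a norm-closed set containing the relevant limits. Under that reading the argument above is complete. I would state this interpretation explicitly at the start of the proof.
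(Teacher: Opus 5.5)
Your proof is correct and is the paper's argument: a norm-to-weak-operator continuous $\rho$ has closed graph because $\B(\Hilbert)$ with the weak operator topology is Hausdorff. You unwind this into the sequential closure property stated in the remark after Definition \ref{metric-twisted-def}, and you spell out that $\rho$ must be defined on all of $\A$, a domain point the paper's one-line proof leaves implicit.
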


\begin{proof}
	Since the weak operator topology is Hausdorff, the graph of the continuous $\rho$ is closed.
\end{proof}

Of course, if $\rho$ is continuous for the norm topologies, then it is continuous from the norm topology to the weak operator topology, and thus satisfies the closure Condition (2) of Definition \ref{metric-twisted-def}. Thus, in particular:
\begin{proposition}
	For all $(k,\sigma) \in \Xi$, the twisted spectral triple $(\A_{k,\sigma}, \Hilbert_k, \Dirac_k, \rho_{k,\sigma})$ is $\kappa_d$-metric.
\end{proposition}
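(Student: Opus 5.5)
The plan is to check, for a fixed $(k,\sigma)\in\Xi$, the three conditions of Definition \ref{metric-twisted-def} with $\kappa\coloneqq\kappa_d$ from Expression \eqref{kappa-d-eq}. Before that, I would confirm that $(\A_{k,\sigma},\Hilbert_k,\Dirac_k,\rho_{k,\sigma})$ is a twisted spectral triple in the sense of Definition \ref{twisted-def}, taking $\A_{\Dirac}\coloneqq\A_{k,\sigma}(\Dirac_k)$. Theorem \ref{L-explicit-thm} identifies this set with $\dom{\Lip_{k,\sigma}}$, which is dense by Corollary \ref{Kantorovich-cor}.

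\textbf{The twist and the Dirac operator.} For $k\neq\infty$, Expressions \eqref{car-twist-eq} and \eqref{car-error-eq} make sense for every $a\in\A_{k,\sigma}$, not only on $\dom{\gradient_{k,\sigma}}$. Indeed, only the directions with $k_j<\infty$ appear in $E_{k,\sigma}$, and there $\fpartial_{k,\sigma}^j$ and $\bpartial_{k,\sigma}^j$ are bounded maps. Moreover, $\Dirac_k^{-1}$ is bounded: by the proof of Lemma \ref{car-riesz-lemma} its norm is at most $\max\{\varepsilon_k^{-1},\pi\}$. So I would view $\rho_{k,\sigma}$ as a norm-continuous linear map $\A_{k,\sigma}\to\B(\Hilbert_k)$, and for $k=\infty$ it is simply the identity. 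For $\Dirac_k$ itself, Expression \eqref{dirac-symbol-eq} shows that $\Dirac_k$ is diagonal in the blocks $\C e_m\otimes\Spinor_d$. Each block has eigenvalues $\pm r_k(m)+\varepsilon_k$, so $\Dirac_k$ is self-adjoint on the natural domain. Its resolvent is compact because $r_k(m)\to\infty$ as $m\to\infty$ in $\Z^d_k$: the finite coordinates take only finitely many values, and the infinite coordinates contribute $m_j^2$.

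\textbf{Conditions (1), (2) and (3).} For Condition (1), Theorem \ref{L-explicit-thm} gives $\opnorm{[\Dirac_k,a]_{\rho_{k,\sigma}}}{}{\Hilbert_k}=\Lip_{k,\sigma}(a)$ on $\A_{k,\sigma}(\Dirac_k)$. Corollary \ref{car-twist-estimate-cor} gives $\kappa_d\opnorm{\rho_{k,\sigma}(a)-a}{}{\Hilbert_k}\leq\Lip_{k,\sigma}(a)$. Hence the seminorm $\Lip$ of Definition \ref{metric-twisted-def} coincides with $\Lip_{k,\sigma}$, and Corollary \ref{Kantorovich-cor} says its {\MongeKant} metrizes the weak* topology. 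Condition (2) follows from the preceding proposition and the remark before it: $\rho_{k,\sigma}$ is norm-to-norm continuous, hence norm-to-weak-operator continuous, so its graph is closed. For Condition (3), $\alpha_{k,\sigma}^j(1)=1$, so every difference of $1$ vanishes; thus $E_{k,\sigma}(1)=0$ and $\rho_{k,\sigma}(1)=1$. The case $k=\infty$ is trivial.

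The main obstacle, though minor, is the bookkeeping: $\rho_{k,\sigma}$ was originally written only on $\dom{\gradient_{k,\sigma}}$, and it has to be extended so that it is defined on $\A_{\Dirac}=\dom{\Lip_{k,\sigma}}$ and is norm continuous. This is exactly what the boundedness of $\Dirac_k^{-1}$ and of the finite-direction differences provides. It is the same continuity already invoked implicitly in the proofs of Lemma \ref{lower-semi-lemma} and Corollary \ref{car-twist-estimate-cor}.
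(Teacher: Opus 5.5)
Your proposal is correct and follows the paper's route. The paper's proof just cites Theorem~\ref{qcms-thm}, Theorem~\ref{L-explicit-thm}, Corollary~\ref{car-twist-estimate-cor} and the norm continuity of $\rho_{k,\sigma}$, which are exactly your ingredients for identifying $\Lip$ with $\Lip_{k,\sigma}$ and obtaining the closed graph; your explicit checks of $\rho_{k,\sigma}(1)=1$, of the extension of $\rho_{k,\sigma}$ to all of $\A_{k,\sigma}$, and of the spectral properties of $\Dirac_k$ fill in details the paper leaves implicit.
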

\begin{proof}
	This follows from Theorem \ref{qcms-thm}, Theorem \ref{L-explicit-thm}, Corollary \ref{car-twist-estimate-cor}, and the norm continuity of $\rho_{k,\sigma}$.
\end{proof}

We now make a small digression to prove that other twists of a form commonly found in the literature (for instance, for conformal transformations of spectral triples) also satisfy our closure condition. We believe this case and the simpler, continuous twist case cover many common examples of twisted spectral triples.

\begin{proposition}
	Let $\Hilbert$ be a Hilbert space, and let $\A$ be a C*-algebra acting on $\Hilbert$. If $S$ is a closed invertible operator on a dense subspace $\dom{S}$ of $\Hilbert$, and if $\rho(a) = S a S^{-1}$ for all $a \in \A$ such that $a\dom S \subseteq \dom S$ and $S a S^{-1}$ is bounded, then $\rho$ is closed from the norm topology on $\A$ to the weak operator topology.
\end{proposition}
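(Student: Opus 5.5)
We have to show the closed graph property required in Definition \ref{metric-twisted-def}: if $(a_n)_{n\in\N}$ lies in the domain of $\rho$ (so $a_n\dom S\subseteq\dom S$ and $Sa_nS^{-1}$ is bounded), if $a_n\to a$ in norm, and if $\rho(a_n)\to T$ in the weak operator topology for some bounded $T$, then $a$ is in the domain of $\rho$ and $\rho(a)=T$. The plan is to pass everything through the adjoint $S^\ast$, which is densely defined because $S$ is closed and densely defined. This lets us put all unbounded operators on the test vectors, where only norm convergence of $a_n$ is needed.

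\textbf{Step 1: a weak identity for the limit.} For $\xi\in\dom{S}$ set $\zeta\coloneqq S\xi$, so that $S^{-1}\zeta=\xi$; as $\xi$ ranges over $\dom{S}$, $\zeta$ ranges over the range of $S$, which is all of $\Hilbert$ since $S$ is invertible. Since $a_n\xi\in\dom S$, we have $\rho(a_n)\zeta=Sa_n\xi$. For $\eta\in\dom{S^\ast}$ this gives
\begin{equation*}
\inner{\rho(a_n)\zeta}{\eta}{\Hilbert}=\inner{Sa_n\xi}{\eta}{\Hilbert}=\inner{a_n\xi}{S^\ast\eta}{\Hilbert}\text.
\end{equation*}
Let $n\to\infty$. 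The left side tends to $\inner{T\zeta}{\eta}{\Hilbert}$ by weak operator convergence, and the right side tends to $\inner{a\xi}{S^\ast\eta}{\Hilbert}$ by norm convergence. Hence
\begin{equation*}
\inner{a\xi}{S^\ast\eta}{\Hilbert}=\inner{TS\xi}{\eta}{\Hilbert}\qquad\text{for all }\eta\in\dom{S^\ast}\text.
\end{equation*}

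\textbf{Step 2: invariance of the domain.} The identity from Step 1 says exactly that $a\xi\in\dom{S^{\ast\ast}}$ and $S^{\ast\ast}a\xi=TS\xi$. Since $S$ is closed, $S^{\ast\ast}=S$. So $a\xi\in\dom S$ and $Sa\xi=TS\xi$ for every $\xi\in\dom S$. In particular $a\dom S\subseteq\dom S$.

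\textbf{Step 3: identifying $\rho(a)$.} Take any $\zeta\in\Hilbert$ and put $\xi=S^{-1}\zeta\in\dom S$. Step 2 gives $SaS^{-1}\zeta=TSS^{-1}\zeta=T\zeta$. So $SaS^{-1}$ is everywhere defined and equals the bounded operator $T$. Thus $a$ is in the domain of $\rho$ and $\rho(a)=T$, which is the required closedness of the graph.

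The only delicate point is Step 2, the use of closedness through $S^{\ast\ast}=S$. This is also why we test against $\eta\in\dom{S^\ast}$ rather than trying to control $Sa_n\xi$ in norm, which need not converge. One should also check that $S^{-1}$ is meant as a bounded, everywhere-defined inverse, so that $SaS^{-1}$ has domain all of $\Hilbert$. This follows from the closed graph theorem, since $S^{-1}$ is closed and everywhere defined. The paper's convention writes $\rho(a)$ for bounded $SaS^{-1}$, which is consistent with this reading.
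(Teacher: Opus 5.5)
Your proof is correct and is essentially the paper's argument. Both fix $\xi\in\dom{S}$, observe that $Sa_n\xi=\rho(a_n)S\xi$ converges weakly to $TS\xi$ while $a_n\xi\to a\xi$ in norm, and use that the graph of the closed operator $S$ is weakly closed to get $Sa\xi=TS\xi$, hence $SaS^{-1}=T$. The differences are small: the paper justifies weak closedness of the graph in $\Hilbert\oplus\Hilbert$ by Mazur's theorem, while you test against $\dom{S^\ast}$ and use $S^{\ast\ast}=S$. You also make explicit two points the paper leaves implicit, namely $a\dom{S}\subseteq\dom{S}$ and the boundedness of $S^{-1}$ via the closed graph theorem.
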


\begin{proof}
	Let $(a_n)_{n\in\N}$ be a sequence in $\dom{\rho}$ converging in $\A$ to $a$, and such that $\rho(a_n) = S a_n S^{-1}$ converges to some operator $T$ in the weak operator topology.

	Since $S$ is a closed operator, its graph is closed in the norm topology of $\Hilbert\oplus\Hilbert$ by definition. By Mazur's theorem, since the graph of $S$ is a subspace, it is convex, and thus it is closed in the weak topology. So if $(\zeta_n)_{n\in\N}$ is a sequence in $\Hilbert$ weakly converging to some $\zeta\in\Hilbert$, and if $(S\zeta_n)_{n\in\N}$ weakly converges to some $\omega$, then $S\zeta = \omega$.

	Let $\xi \in \dom{S}$ and let $\eta \coloneqq S\xi$. For all $\omega\in\Hilbert$, the sequence $(\inner{\rho(a_n)\eta}{\omega}{\Hilbert})_{n\in\N}$ converges to $\inner{T\eta}{\omega}{\Hilbert}$. So $(S a_n \xi)_{n\in\N}$ weakly converges to $TS\xi$.
	
	Now, $(a_n)_{n\in\N}$ converges in norm, so $(a_n \xi)_{n\in\N}$ converges in norm to $a\xi$, hence it converges weakly. Therefore, as $S$ is closed, its graph is closed for the weak topology, and thus we conclude the weak limit of $(S a_n\xi)_{n\in\N}$ is, in fact, $S a \xi$. So $S a \xi = T S \xi$. So $S a S^{-1} = T$ as desired.
\end{proof}

We now return to our primary interest. The construction used to associate a monoid action on a metrical C*-correspondence to any spectral triple generalizes immediately to our present situation, with the change that we must allow for the lack of the Leibniz properties for the Lip-norm induced by a twisted spectral triple. However, we \emph{do} require the other Leibniz properties (modular and inner) that we used for metrical C*-correspondences.

\begin{definition}
	A \emph{relaxed $(C,D,G,H)$-metrical C*-correspondence} $(\module{M}, \CDN, \A, \Lip_\A, \B, \Lip_\B)$, for $C\geq 1$, $D\geq 0$, $G\geq 1$ and $H\geq 2$, is an $\A$-$\B$ C*-correspondence $(\module{M}, \A, \B)$, together with:
	\begin{enumerate}
		\item a norm $\CDN$ defined on a dense $\C$-subspace $\dom{\CDN}$ of $\module{M}$ 
		\item a $(C,D)$-{\qcms} $(\B,\Lip_\B)$,
		\item a relaxed \qcms $(\A,\Lip_\A)$,
	\end{enumerate}
	such that
	\begin{enumerate}
		\item the closed unit ball is compact for the Hilbert norm on $\module{M}$, and which dominates the Hilbert module norm of $\module{M}$,
		\item we require a form of \emph{modular Leibniz property}: for all $a \in \dom{\Lip_\A}$ and for all $\xi \in \dom{\CDN}$, we impose $a\xi \in \dom{\CDN}$, and
		\begin{equation*}
			\CDN(a\xi) \leq G\max\left\{ \norm{a}{\A} , \Lip_\A(a)\right\}  \CDN(\xi) \text,
		\end{equation*}
		\item we also require a form of \emph{inner Leibniz property}: for all $\xi, \eta \in \dom{\CDN}$, the inner product $\inner{\xi}{\eta}{\module{M}}$ is in $\dom{\Lip_\B}$, and
		\begin{equation*}
			\Lip_\B(\inner{\xi}{\eta}{\module{M}}) \leq H \CDN(\xi) \CDN(\eta) \text.
		\end{equation*}
	\end{enumerate}

	If $(\A,\Lip_\A)$ is a $(F,F')$-{\qcms}, then $(\module{M},\CDN,\A,\Lip_\A,\B,\Lip_\B)$ is a \emph{metrical C*-correspondence} with parameter $(F,F',C,D,G,H)$. 
\end{definition}

The key observation which starts our conversation on convergence of metric twisted spectral triples is the following generalization of \cite[Theorem 2.7]{Latremoliere18g}.
\begin{theorem}
	If $(\A,\Hilbert,\Dirac,\rho)$ is a $\kappa$-metric twisted spectral triple for $0 < \kappa \leq 1$, then the sextuple
	\begin{equation*}
		\mcc{\A}{\Hilbert}{\Dirac,\rho} \coloneqq (\Hilbert,\CDN,\A,\Lip,\C,0)
	\end{equation*}
	where, for all $\xi \in \dom{\Dirac}$, we define
	\begin{equation*}
	\CDN(\xi) = \norm{\xi}{\Hilbert} + \norm{\Dirac\xi}{\Hilbert}
	\end{equation*}
	and for all $a \in \A_{\Dirac}$, we set
	\begin{equation*}
	\Lip(a) = \max\left\{
	\opnorm{[\Dirac,a]_\rho}{}{\Hilbert},
	\kappa\opnorm{\rho(a)-a}{}{\Hilbert}
	\right\}\text,
	\end{equation*}
	is a $(1,0,\kappa^{-1}+2,2)$-relaxed metrical C*-correspondence. Moreover, if we set for all $t \in \R$:
	\begin{equation*}
		V_t \coloneqq \exp( i t \Dirac )	
	\end{equation*}
	then $t \in [0,\infty) \mapsto V_t$ is a strongly continuous action on $\mcc{\A}{\Hilbert}{\Dirac,\rho}$ of the monoid $[0,\infty)$ by unitaries. 
\end{theorem}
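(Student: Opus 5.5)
The plan is to check, one at a time, the axioms of a relaxed $(1,0,\kappa^{-1}+2,2)$-metrical C*-correspondence for $(\Hilbert,\CDN,\A,\Lip,\C,0)$, and then the properties of $(V_t)_{t\geq 0}$.

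\textbf{The $\C$ side, the inner Leibniz property and compactness.} The target $(\C,0)$ is trivially a $(1,0)$-{\qcms}: the kernel of $0$ is $\C$, and the state space is a single point. The inner Leibniz property is then vacuous, since $\inner{\xi}{\eta}{\Hilbert}\in\C=\dom{0}$ and $0\leq 2\CDN(\xi)\CDN(\eta)$.

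$\CDN$ is a norm on the dense subspace $\dom{\Dirac}$, and it dominates $\norm{\cdot}{\Hilbert}$. Its closed unit ball is contained in $\{\xi:\norm{\xi}{\Hilbert}+\norm{\Dirac\xi}{\Hilbert}\leq 1\}$, and this set is the image of a bounded set under $(\Dirac+i)^{-1}$. Since $\Dirac$ has compact resolvent, it is relatively compact. To get actual compactness I would show the ball is closed in $\Hilbert$: if $\xi_n\to\xi$ with $\CDN(\xi_n)\leq 1$, then $(\Dirac\xi_n)_n$ has a weakly convergent subsequence, and closedness of $\Dirac$ (its graph is weakly closed, as in the argument given for conjugation twists) gives $\xi\in\dom{\Dirac}$. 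Weak lower semicontinuity of norms then gives $\CDN(\xi)\leq 1$.

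\textbf{$(\A,\Lip)$ is a relaxed {\qcms}.} Condition (2) is Definition \ref{metric-twisted-def}(1). For condition (1), $\rho(1)=1$ gives $[\Dirac,1]_\rho=0$ and $\rho(1)-1=0$, so $\Lip(1)=0$. Conversely, if $\Lip(a)=0$, then since $\Kantorovich{\Lip}$ metrizes a compact space it has finite diameter. Arguing as in Corollary \ref{Kantorovich-cor}, every state takes the same value on $a$, so $a$ is scalar.

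The main obstacle is condition (3), closedness of $\{a:\Lip(a)\leq 1\}$, since $\rho$ is not assumed continuous. Let $a_n\to a$ with $\Lip(a_n)\leq 1$. By Expression \eqref{augmented-twist-bound-eq}, both $\rho(a_n)$ and $[\Dirac,a_n]_\rho$ are uniformly bounded. $\Hilbert$ is separable, because $\Dirac$ has a countable orthonormal eigenbasis. So bounded sets of $\B(\Hilbert)$ are weak-operator metrizable and compact, and I can pass to a subsequence along which $\rho(a_n)\to R$ and $[\Dirac,a_n]_\rho\to T$ weakly. The closed graph condition gives $a\in\dom{\rho}$ and $\rho(a)=R$. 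Weak lower semicontinuity of the operator norm gives $\kappa\opnorm{\rho(a)-a}{}{\Hilbert}\leq\liminf_n \kappa\opnorm{\rho(a_n)-a_n}{}{\Hilbert}\leq 1$. Next I repeat the proof of Lemma \ref{lower-semi-lemma}: for $\xi,\eta\in\dom{\Dirac}$, $\inner{T\xi}{\eta}{}=\inner{a\xi}{\Dirac\eta}{}-\inner{\rho(a)\Dirac\xi}{\eta}{}$, and self-adjointness of $\Dirac$ yields $a\xi\in\dom{\Dirac}$ with $[\Dirac,a]_\rho=T$, of norm at most $1$. Hence $a\in\A_{\Dirac}$ and $\Lip(a)\leq 1$.

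\textbf{Modular Leibniz property and the action.} For $a\in\A_{\Dirac}$ and $\xi\in\dom{\Dirac}$ we have $a\xi\in\dom{\Dirac}$ and
\[
\Dirac a\xi=[\Dirac,a]_\rho\xi+\rho(a)\Dirac\xi .
\]
Combined with Expression \eqref{augmented-twist-bound-eq}, this gives
\[
\CDN(a\xi)\leq(\norm{a}{\A}+\Lip(a))\norm{\xi}{\Hilbert}+(\norm{a}{\A}+\kappa^{-1}\Lip(a))\norm{\Dirac\xi}{\Hilbert}\leq(\kappa^{-1}+2)\max\{\norm{a}{\A},\Lip(a)\}\CDN(\xi).
\]
Finally, by Stone's theorem $t\mapsto V_t=\exp(it\Dirac)$ is a strongly continuous unitary group. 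Each $V_t$ is $\C$-linear, preserves the inner product, leaves $\dom{\Dirac}$ invariant and commutes with $\Dirac$, so $\CDN(V_t\xi)=\CDN(\xi)$. The trivial action on $\C$ is compatible with this. Restricting to $[0,\infty)$ gives the stated monoid action by unitaries, exactly as in \cite[Theorem 2.7]{Latremoliere18g}.
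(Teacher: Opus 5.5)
Your proposal is correct and follows the paper's proof essentially step for step. It uses the same weak-operator compactness and closed-graph argument to show $\{a:\Lip(a)\leq 1\}$ is closed, the same identification $[\Dirac,a]_\rho=T$ via self-adjointness of $\Dirac$, and the same modular Leibniz estimate via Expression \eqref{augmented-twist-bound-eq}. Beyond that, you extract subsequences using the separability of $\Hilbert$, where the paper uses subnets, which fits the sequential form of the closed-graph condition in the remark after Definition \ref{metric-twisted-def}, and you prove directly that the $\CDN$-unit ball is closed, where the paper only cites earlier work.
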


\begin{remark}
	If moreover, $\Lip$ is $(C,D)$-Leibniz, then $\mcc{\A}{\Hilbert}{\Dirac,\rho}$ is a $(C,D,1,0,\kappa^{-1}+2,2)$ metrical C*-correspondence.
\end{remark}

\begin{proof}
First, note that $[\Dirac,1]_\rho = 0$ by Condition (3) in Definition \ref{metric-twisted-def}. Since $\Kantorovich{\Lip}$ metrizes the weak* topology of $\StateSpace(\A)$, it follows that $\Lip(a) = 0 \implies a \in \C\unit_\A$, so we conclude $\ker \Lip = \C \unit_\A$. 

By assumption, $\Kantorovich{\Lip}$ metrizes the weak* topology. We now prove that $\Lip$ is also lower semicontinuous on $\A$.

Let $(a_n)_{n\in\N}$ be a sequence in $\A$ converging to $a$. If $\liminf_{n\to\infty}\Lip(a_n)=\infty$, there is nothing to prove. Otherwise, after passing to a subsequence, we may assume that
\begin{equation*}
	\lim_{n\to\infty}\Lip(a_n)=\liminf_{n\to\infty}\Lip(a_n)<\infty.
\end{equation*}
The sequences $(\rho(a_n))_{n\in\N}$ and $([\Dirac,a_n]_\rho)_{n\in\N}$ are norm-bounded by Expression \eqref{augmented-twist-bound-eq} and the definition of $\Lip$. By weak-operator compactness of bounded balls in $\B(\Hilbert)$, they admit a common subnet converging in the weak operator topology, respectively, to operators $R,T\in\B(\Hilbert)$. Since $\rho$ has a closed graph, $R=\rho(a)$.

For all $\xi,\eta\in\dom{\Dirac}$, we then have
\begin{equation*}
	\inner{T\xi}{\eta}{\Hilbert}
	=
	\inner{a\xi}{\Dirac\eta}{\Hilbert}
	-
	\inner{\rho(a)\Dirac\xi}{\eta}{\Hilbert}.
\end{equation*}
Thus
\begin{equation*}
	\inner{a\xi}{\Dirac\eta}{\Hilbert}
	=
	\inner{T\xi+\rho(a)\Dirac\xi}{\eta}{\Hilbert}
\end{equation*}
for all $\eta\in\dom{\Dirac}$. Since $\Dirac$ is self-adjoint, this proves that $a\xi\in\dom{\Dirac}$ and $[\Dirac,a]_\rho=T$. The weak-operator lower semicontinuity of the operator norm gives
\begin{align*}
	\opnorm{[\Dirac,a]_\rho}{}{\Hilbert}
	&\leq\liminf_{n\to\infty}\opnorm{[\Dirac,a_n]_\rho}{}{\Hilbert},\\
	\opnorm{\rho(a)-a}{}{\Hilbert}
	&\leq\liminf_{n\to\infty}\opnorm{\rho(a_n)-a_n}{}{\Hilbert}.
\end{align*}
Consequently, $\Lip(a)\leq\liminf_{n\to\infty}\Lip(a_n)$.
Hence $\Lip$ is lower semicontinuous. So $(\A,\Lip)$ is indeed a relaxed {\qcms}.

It is immediate that $(\Hilbert,\A,\C)$ is a C*-correspondence. Moreover, since $\Dirac$ has compact resolvent, just as in \cite{Latremoliere18g}, we note that $\{ \xi \in \dom{\Dirac} : \CDN(\xi) \leq 1\}$ is compact. Moreover $\CDN\geq\norm{\cdot}{\Hilbert}$ by construction.

We turn to the modular Leibniz properties. Let $a \in \dom{\Lip}$ and $\xi \in \dom{\Dirac}$. We compute:
	\begin{align*}
		\CDN(a\xi)
		&=\norm{a\xi}{\Hilbert}+\norm{\Dirac a \xi}{\Hilbert} \\
		&\leq\norm{a}{\A}\norm{\xi}{\Hilbert}
		+ \norm{(\Dirac a - \rho(a)\Dirac)\xi}{\Hilbert} + \norm{\rho(a)\Dirac\xi}{\Hilbert} \\
		&\leq\norm{a}{\A}\norm{\xi}{\Hilbert}
		+\Lip(a)\norm{\xi}{\Hilbert}+\opnorm{\rho(a)}{}{\Hilbert}\norm{\Dirac\xi}{\Hilbert} \\
		&\leq(2+\kappa^{-1})\max\{\norm{a}{\A},\Lip(a)\}\CDN(\xi) \text.
	\end{align*}
	
	The inner Leibniz property is trivial here as the L-seminorm on $\C$ is $0$. Hence, we have shown that $\mcc{\A}{\Hilbert}{\Dirac,\rho}$ is a $(1,0,\kappa^{-1}+2,2)$-relaxed metrical C*-correspondence.
	
	It is of course standard that $V:t\in [0,\infty) \to V_t$ is a strongly continuous action by unitaries of $\Hilbert$ of the additive monoid $[0,\infty)$.
\end{proof}

\begin{remark}
	Lemma \ref{lower-semi-lemma} is now just a simple corollary of the above result.
\end{remark}

We note in passing the following consequence of our construction.
\begin{corollary}
	If $(\A,\Hilbert,\Dirac,\rho)$ is a metric twisted spectral triple, then $(\A,\Lip)$ is a relaxed \qcms.
\end{corollary}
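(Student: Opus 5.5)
This is essentially a restatement of part of the preceding theorem, so I will extract it from that proof rather than argue from scratch. Let $(\A,\Hilbert,\Dirac,\rho)$ be a $\kappa$-metric twisted spectral triple and let $\Lip(a)=\max\{\opnorm{[\Dirac,a]_\rho}{}{\Hilbert},\kappa\opnorm{\rho(a)-a}{}{\Hilbert}\}$ on $\A_{\Dirac}$, extended by $\infty$ elsewhere. I must check the three conditions defining a relaxed {\qcms}: density of the domain, $\ker\Lip=\R\unit_\A$ (on self-adjoint elements), the metrization of the weak* topology by $\Kantorovich{\Lip}$, and closedness of the unit ball of $\Lip$.

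The order is as follows. First, density of $\dom{\Lip}=\A_{\Dirac}$ is part of Definition \ref{twisted-def}. Second, metrization of the weak* topology by $\Kantorovich{\Lip}$ is exactly Condition (1) of Definition \ref{metric-twisted-def}. Third, for the kernel: Condition (3) gives $\rho(1)=1$, so $[\Dirac,1]_\rho=\Dirac-\Dirac=0$ on $\dom{\Dirac}$ and $\rho(1)-1=0$, hence $\Lip(\unit_\A)=0$. Conversely, because $\Kantorovich{\Lip}$ is a genuine metric on the weak*-compact state space, it has finite diameter. If $\Lip(a)=0$, then by homogeneity $|\varphi(a)-\psi(a)|\leq t\,\Kantorovich{\Lip}(\varphi,\psi)$ for every $t>0$, so all states agree on $a$ and $a$ is scalar, as in Corollary \ref{Kantorovich-cor}.

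The remaining and only substantive step is lower semicontinuity, i.e.\ closedness of $\{a:\Lip(a)\leq1\}$, which I take from the preceding theorem. Take $a_n\to a$ with $\Lip(a_n)\leq 1$. By Expression \eqref{augmented-twist-bound-eq}, both $\rho(a_n)$ and $[\Dirac,a_n]_\rho$ are uniformly bounded, so a common subnet converges in the weak operator topology to some $R$ and $T$. The closed graph Condition (2) forces $R=\rho(a)$. Self-adjointness of $\Dirac$ then shows that $a\dom{\Dirac}\subseteq\dom{\Dirac}$ and that $[\Dirac,a]_\rho=T$. Finally, WOT lower semicontinuity of the operator norm gives $\Lip(a)\leq1$.

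The main obstacle is the closed-graph argument. Without Condition (2) one could not identify the WOT limit of $\rho(a_n)$ with $\rho(a)$, since $\rho$ may be unbounded. Everything else is bookkeeping against the definition.
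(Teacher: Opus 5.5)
Your proposal is correct and is essentially the paper's own argument. The corollary is read off from the proof of the preceding theorem, which gets $\ker\Lip=\C\unit_\A$ from $\rho(1)=1$ and the metrization hypothesis, and gets lower semicontinuity of $\Lip$ from three ingredients: a weak-operator limit point of the bounded families $(\rho(a_n))_{n\in\N}$ and $([\Dirac,a_n]_\rho)_{n\in\N}$, the closed-graph Condition (2), and self-adjointness of $\Dirac$.

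One small refinement: since $\Dirac$ has compact resolvent, $\Hilbert$ is separable, so bounded subsets of $\B(\Hilbert)$ are metrizable for the weak operator topology. You may therefore extract a subsequence instead of a subnet, which lets you use Condition (2) in the sequential form stated in the remark after Definition \ref{metric-twisted-def}.
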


We pause to emphasize once more than, in our current setting, we \emph{do not} require that our twisted spectral triples give rise to \emph{Leibniz}-type L-seminorms via the twisted commutator. This indeed allows us to deal with unbounded twists in general. The \emph{surprise} here will be that the spectral propinquity, extended to metric twisted spectral triples, is still a metric up to unitary equivalence, including distance zero implying that the underlying C*-algebras are isomorphic. This normally follows from the now unassumed Leibniz property. This is specific to the spectral propinquity situation, however: in general, when working with the propinquity or its covariant and modular counterparts, we \emph{do} need the Leibniz property. For instance, removing the Leibniz property from the dual propinquity construction leads to Rieffel's distance instead \cite{Latremoliere20b}. So it is very surprising that in this very special case of the spectral propinquity for metric twisted spectral triple, this condition is not needed --- it is particularly welcomed since indeed, we do not expect the Leibniz property to hold in any form when working with unbounded twists. To be clear, it means that when working with metric twisted spectral triples, the dual propinquity alone does not give us a distance up to full quantum isometry, since the Leibniz property may not hold, \emph{but} the spectral propinquity will apply just well by capturing what is missing from the lack of the Leibniz property from the modular Leibniz property instead.

\emph{However}, it should be made equally clear that in the case of the twisted spectral triples on fuzzy tori, the twists are bounded and the L-seminorms are, in fact, $(4d,0)$-Leibniz. We also note that when working with untwisted metric spectral triples, then the Leibniz property is immediately satisfied, so these observations were not made in that case.

Since we have a strongly continuous action of a monoid on a relaxed metrical C*-correspondence, we naturally would like to use the covariant modular propinquity to define a distance on metric twisted spectral triples, just as we did in the untwisted case. However, we do want to also account for the twist in our distance, and we also must keep in mind that we work with \emph{relaxed} metrical C*-correspondence.

\subsection{Incorporating a distance between twists in the spectral propinquity}

Let us first agree on what mean by unitarily equivalent twisted spectral triples.

\begin{definition}\label{unitary-eq-def}
	Two twisted spectral triples $(\A,\Hilbert,\Dirac,\rho)$ and $(\B,\Hilbert[J],\Dirac[\nabla],\varrho)$ are \emph{unitarily equivalent} when there exists a unitary $U : \Hilbert\rightarrow\Hilbert[J]$ such that
	\begin{enumerate}
		\item $U\dom{\Dirac} = \dom{\Dirac[\nabla]}$,
		\item $U\Dirac U^\ast = \Dirac[\nabla]$,
		\item $\pi : a \in \A \mapsto U a U^\ast$ is a *-isomorphism from $\A$ onto $\B$,
		\item $U\rho(\cdot) U^\ast = \varrho\circ\pi(\cdot)$ on $\A_{\Dirac}$.
	\end{enumerate}
	We say that such a unitary $U$ \emph{implements} the unitary equivalence between the two twisted spectral triples.
\end{definition}
\begin{remark}
	If $\varrho$ and $\rho$ are the identity, then (3) and (4) become the same statement, meaning that our notion is compatible with the usual unitary equivalence of spectral triples. It will be the pattern moving forward: our construction immediately reduces to the usual propinquity between spectral triples when they are ``untwisted'', i.e. the twist is the identity.
\end{remark}

We dive a little bit in the details on how the spectral propinquity was constructed between (untwisted) spectral triples, at the cost of unpacking the definition of the covariant metrical propinquity. This will be necessary to prove that a natural extension of the spectral propinquity can indeed account for the twist, and to emphasize how working with relaxed metrical C*-correspondences affect our construction.

\begin{definition}
  A \emph{quantum isometry} $(\Psi,\pi,\theta)$ from a relaxed metrical C*-correspondence $\mathds{P} = (\module{P},\TDN,\D,\Lip_\D,\alg{E},\Lip[T])$ to another relaxed metrical C*-correspondence $\mathds{M} = (\module{M},\CDN,\A,\Lip_\A,\B,\Lip_\B)$ is a triple of a surjective $\C$-linear map $\Psi :  \module{P} \to \module{M}$ and two unital surjective *-morphisms $\pi : \D \to \A$ and $\theta : \alg{E} \to \B$, such that:
\begin{enumerate}
	\item $\Psi(a \xi b) = \pi(a) \Psi(\xi) \theta(b)$ for all $a \in \D$, $b\in\alg{E}$ and $\xi \in \module{P}$,
	\item $\inner{\Psi(\xi)}{\Psi(\eta)}{\module{M}} = \theta(\inner{\xi}{\eta}{\module{P}})$ for all $\xi,\eta \in \module{P}$,
	\item $\pi$ and $\theta$ are quantum isometries,
	\item $\Psi(\dom{\TDN}) = \dom{\CDN}$
	\item $\CDN(\xi) = \inf\left\{ \TDN(\omega) : \omega\in\dom{\TDN}, \Psi(\omega) = \xi \right\}$ for all $\xi \in \dom{\CDN}$.
\end{enumerate}
\end{definition}
Our definition of a quantum isometry is exactly the same definition as for metrical C*-correspondences, since none of these requirements depend on $(\D,\Lip_\D)$ and $(\A,\Lip_\A)$ being Leibniz.

\begin{definition}\label{tunnel-def}
We assume henceforth that we are given two relaxed $(C,D,G,H)$-metrical C*-correspondences $\mathds{M}_1$ and $\mathds{M}_2$ (for $C\geq 1$, $D\geq 0$, $G\geq 1$ and $H\geq 2$), which we write for each $j \in \{1,2\}$ as:
\begin{equation*}
	\mathds{M}_j \coloneqq \left( \mathscr{M}_j, \CDN_j, \A_j, \Lip_j, \B_j, \Lip[Q]_j \right)\text.
\end{equation*}
A \emph{$(C,D,G,H)$-tunnel} $\tau\coloneqq (\mathds{P},\Pi_1,\Pi_2)$ between these two relaxed metrical C*-correspondences is the data of another $(C,D,G,H)$-relaxed metrical C*-correspondace
\begin{equation*}
	\mathds{P} \coloneqq \left( \mathscr{P}, \TDN, \D, \Lip[T], \alg{E}, \Lip[S] \right)
\end{equation*}
together with two quantum isometries of relaxed metrical C*-correspondences $\Pi_1 = (\Psi_1,\pi_1,\theta_1)$ from $\mathds{P}$ to $\mathds{M}_1$ and $\Pi_2 = (\Psi_2, \pi_2, \theta_2)$ from $\mathds{P}$ to $\mathds{M}_2$.

The \emph{domain} $\dom{\tau}$ of $\tau$ is $\mathds{M}_1$, and the codomain $\codom{\tau}$ of $\tau$  is $\mathds{M}_2$.
\end{definition}

\begin{notation}
	If $\tau = (\mathds{P},\Pi_1,\Pi_2)$ is a tunnel, then $\tau^{-1}$ is the tunnel $(\mathds{P},\Pi_2,\Pi_1)$ from $\codom{\tau}$ to $\dom{\tau}$.
\end{notation}

\begin{definition}
		If $(\A,\Hilbert,\Dirac,\rho)$ and $(\B,\Hilbert[S],\Dirac[\nabla],\varrho)$ are two $\kappa$-metric twisted spectral triple, then a tunnel from $(\A,\Hilbert,\Dirac,\rho)$ to $(\B,\Hilbert[S],\Dirac[\nabla],\varrho)$ is a tunnel from $\mcc{\A}{\Hilbert}{\Dirac,\rho}$ to $\mcc{\B}{\Hilbert[S]}{\Dirac[\nabla],\varrho}$.
\end{definition}

\begin{definition}
	If $(\A,\Hilbert,\Dirac,\rho)$ and $(\B,\Hilbert[S],\Dirac[\nabla],\varrho)$ are two unitarily equivalent $\kappa$-metric twisted spectral triples, with $U : \Hilbert_1 \to \Hilbert_2$ a unitary implementing this equivalence, then
	\begin{equation*}
		\left( \mcc{\A}{\Hilbert}{\Dirac,\rho} , (\mathrm{id}_{\Hilbert_1}, \mathrm{id}_{\A}, \mathrm{id}_\C) , (U, U\cdot U^\ast, \mathrm{id}_\C) \right)
	\end{equation*}
	is called the \emph{trivial tunnel} between $(\A,\Hilbert,\Dirac,\rho)$ and $(\B,\Hilbert[S],\Dirac[\nabla],\varrho)$.
\end{definition}

A tunnel allows us to define certain quantities to quantify how close the relaxed metrical C*-correspondences $\mathds{M}_1$ and $\mathds{M}_2$, and actions of monoids on them, are. 
\begin{definition}
	With the notations of Definition \ref{tunnel-def}, the \emph{extent} $\tunnelextent{\tau}$ of the $(C,D,G,H)$-tunnel $\tau \coloneqq (\mathds{P},\Pi_1,\Pi_2)$ is the maximum of the extents of the tunnels $(\D,\Lip[T],\pi_1,\pi_2)$ and $(\alg{E},\Lip[S],\rho_1,\rho_2)$.
\end{definition}

\begin{proposition}
	The extent of a trivial tunnel is $0$.
\end{proposition}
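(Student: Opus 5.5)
The plan is to compute the two extents directly. The trivial tunnel is $(\mcc{\A}{\Hilbert}{\Dirac,\rho}, (\mathrm{id}, \mathrm{id}_\A, \mathrm{id}_\C), (U, \pi, \mathrm{id}_\C))$ with $\pi = U\cdot U^\ast$. Its extent is the maximum of the extents of two tunnels of {\qcms s}: the base tunnel $(\A,\Lip,\mathrm{id}_\A,\pi)$ and the coefficient tunnel $(\C,0,\mathrm{id}_\C,\mathrm{id}_\C)$.

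The coefficient tunnel is immediate. $\StateSpace(\C)$ is a single point, and composing with the identity recovers all of it, so both Hausdorff distances vanish.

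For the base tunnel, the first Hausdorff distance compares $\{\varphi\circ\mathrm{id}_\A:\varphi\in\StateSpace(\A)\}$ with $\StateSpace(\A)$. These are the same set, so the distance is $0$.

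For the second, we must compare $\{\psi\circ\pi : \psi\in\StateSpace(\B)\}$ with $\StateSpace(\A)$. Since $\pi$ is a *-isomorphism, $\psi\mapsto\psi\circ\pi$ is a bijection from $\StateSpace(\B)$ onto $\StateSpace(\A)$, with inverse $\varphi\mapsto\varphi\circ\pi^{-1}$. The two sets are therefore equal and this distance is also $0$.

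The only point needing care is that the trivial tunnel really is a tunnel, namely that $(U,\pi,\mathrm{id}_\C)$ is a quantum isometry. This is presumably part of the definition being well posed, but I would verify it briefly.

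\emph{The module maps.} We have $U(a\xi)=\pi(a)U\xi$ and $\inner{U\xi}{U\eta}{}=\inner{\xi}{\eta}{}$. From $U\dom{\Dirac}=\dom{\Dirac[\nabla]}$ and $U\Dirac U^\ast=\Dirac[\nabla]$ we get $\CDN_2(U\xi)=\CDN_1(\xi)$, and since $U$ is bijective the infimum in condition (5) is attained trivially.

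\emph{The algebra map.} We need $\Lip_\B\circ\pi=\Lip_\A$ and $\pi(\A_{\Dirac})=\B_{\Dirac[\nabla]}$. Condition (4) of Definition \ref{unitary-eq-def} gives $\varrho(\pi(a))=U\rho(a)U^\ast$. Hence
\[
[\Dirac[\nabla],\pi(a)]_\varrho=U[\Dirac,a]_\rho U^\ast
\quad\text{and}\quad
\varrho(\pi(a))-\pi(a)=U(\rho(a)-a)U^\ast,
\]
so both terms in $\Lip$ are preserved and $\pi$ is a quantum isometry. The domain statement follows in the same way, by conjugating with $U$ in each direction.

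With this in place, all four Hausdorff distances are $0$, and the extent of the trivial tunnel is $0$.
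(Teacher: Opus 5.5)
Your computation is correct and is exactly the ``immediate computation'' the paper has in mind. The pullback of $\StateSpace(\A)$ through $\mathrm{id}_\A$ and the pullback of $\StateSpace(\B)$ through the *-isomorphism $U\cdot U^\ast$ both equal $\StateSpace(\A)$, and the coefficient tunnel over $\C$ trivially has extent $0$. Your side check that $(U,U\cdot U^\ast,\mathrm{id}_\C)$ is a quantum isometry is not needed for the statement, and its reverse domain inclusion $\B_{\Dirac[\nabla]}\subseteq U\A_{\Dirac}U^\ast$ does not follow from conjugation alone: condition (4) of Definition \ref{unitary-eq-def} only intertwines the twists on $\A_{\Dirac}$, so you must read that condition as also asserting equality of the twists' domains, as the paper implicitly does.
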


\begin{proof}
	This is an immediate computation.
\end{proof}

\begin{remark}
	For any metrical tunnel, $\tunnelextent{\tau^{-1}} = \tunnelextent{\tau}$ trivially.
\end{remark}

	 The extent suffices to define the metrical propinquity, albeit when working with metrical C*-correspondences; we work here with relaxed metrical C*-correspondences. Our focus remains the construction of a metric between metric twisted spectral triples, rather than general relaxed metrical C*-correspondences, in the present paper.
	 
	 When working with spectral triples, twisted or not, we wish to also account for the Dirac operators more explicitly, so as to obtain the desired coincidence property for the spectral propinquity. Henceforth, we assume that $\mathds{M}_1 = \mcc{\A_1}{\Hilbert_1}{\Dirac_1,\rho_1}$ and $\mathds{M}_2 = \mcc{\A_2}{\Hilbert_2}{\Dirac_2,\rho_2}$ for two $\kappa$-metric twisted spectral triples $(\A_1,\Hilbert_1,\Dirac_1,\rho_1)$ and $(\A_2,\Hilbert_2,\Dirac_2,\rho_2)$ for some $0<\kappa\leq1$. 

As in \cite{Latremoliere18g,Latremoliere22}, given a $(C,D,G,H)$-tunnel $(\mathds{P}, (\Psi_1,\pi_1,\theta_1), (\Psi_2,\pi_2,\theta_2))$ from $\module{M}_1$ to $\module{M}_2$, where
\begin{equation*}
	\mathds{P} \coloneqq (\module{P}, \TDN, \D, \TLip, \alg{E}, \Lip[Q])\text,
\end{equation*}
we propose a tool to quantify how far the actions $t \in [0,\infty) \mapsto \exp(2i\pi t \Dirac_1)$ and $t \in [0,\infty) \mapsto \exp(2i\pi t \Dirac_2)$ are. Given a nonempty set $J$ and two families of linear maps $(A_j)_{j\in J}$ and $(B_j)_{j \in J}$ on $\Hilbert_1$ and $\Hilbert_2$, respectively, we set
\begin{multline*}
	\tunnelsep{(A_j)_{j \in J} }{(B_j)_{j\in J}} \coloneqq \Haus{\Kantorovich{\TDN,J}}\Bigg( \left\{ \left( \inner{A_j\Psi_1(\omega)}{\Psi_1(\cdot)}{\Hilbert_1}\right)_{j\in J} : \TDN(\omega)\leq 1 \right\}, \\
 	 \left\{ \left( \inner{B_j\Psi_2(\omega)}{\Psi_2(\cdot)}{\Hilbert_2}\right)_{j \in J} : \TDN(\omega)\leq 1\right\} \Bigg) \text,
\end{multline*}
where, for any two families $(\varphi_j)_{j\in J}$ and $(\lambda_j)_{j\in J}$ of linear functionals over $\module{P}$, we set: 
\begin{equation*}
	\Kantorovich{\TDN,J}((\varphi_j)_{j \in J},(\lambda_j)_{j \in J}) \coloneqq \sup\left\{ |\varphi_j(\eta) - \lambda_j(\eta)| : \TDN(\eta)\leq 1, j \in J \right\} \text.
\end{equation*}
We refer to \cite{Latremoliere22} for discussion of these notions. We will also use the following notation for the closed unit ball in the domain on $\Lip[T]$:
\begin{equation*}
	B(\tau) \coloneqq \{ d \in \dom{\Lip[T]} : \max\{\norm{d}{\D}, \Lip[T](d) \} \leq 1 \} \text.
\end{equation*}

With this notation, we define the \emph{$\varepsilon$-magnitude} between our metric twisted spectral triples, as seen from the perspective of our tunnel $(\mathds{P},\Pi_1,\Pi_2)$, as the following number. We note that it is an immediate generalization of our work in \cite{Latremoliere18g}, designed to account for a possible twist.

\begin{multline}\label{magnitude-eq}
	\tunnelmagnitude{\tau ; \varepsilon} \coloneqq \\ 
	\max\Bigg\{ \overbracket[1pt]{\tunnelextent{\tau}, \tunnelsep{ (e^{i t \Dirac_1})_{t\in [0,\varepsilon^{-1}]}}{(e^{i t \Dirac_2})_{t\in [0,\varepsilon^{-1}]}}}^{\text{ from \cite{Latremoliere18g}, handles everything but the twist}}, \\ \underbracket[1pt]{\frac{1}{GH} \tunnelsep{(\rho_1\circ\pi_1(d)^\ast)_{d \in B(\tau)}}{ (\rho_2\circ\pi_2(d)^\ast)_{d\in B(\tau)}}}_{\text{ new part to handle the twist }}  \Bigg\} \text.
\end{multline}

Note that we use the separation between the families of the \emph{adjoints} of $\rho_1\circ\pi_1(d)$ and $\rho_2\circ\pi_2(d)$ as $d$ ranges over the ball $B(\tau)$. This is a quirk of our initial choice of definition for the separation in \cite{Latremoliere22}, and we use here the same notation.

In any case, as is common in our work, we allow for some flexibility in choosing a class of tunnels to define our propinquity. We will use the following notion.
	 \begin{definition}
	 	Let $0<\kappa\leq1$ and $\mathcal{C}$ be a nonempty class of $\kappa$-metric twisted spectral triples. A \emph{$\mathcal{C}$-appropriate class of tunnels} $\mathcal{T}$ is a class of $(C,D,G,H)$-tunnels between relaxed $(C,D,G,H)$-metrical C*-correspondences with $C\geq 1$, $D\geq 0$, $G\geq \kappa^{-1}+2$ and $H\geq 2$ such that:
	 	\begin{enumerate}
	 	 	\item there exists a tunnel in $\mathcal{T}$ between any two $\kappa$-metric twisted spectral triples in $\mathcal{C}$,
	 		\item if $(\A_1,\Hilbert_1,\Dirac_1,\rho_1)$ and $(\A_2,\Hilbert_2,\Dirac_2,\rho_2)$ are in $\mathcal{C}$ and are unitarily equivalent, then $\mathcal{T}$ contains a trivial tunnel between them,
	 		\item if $\tau \in \mathcal{T}$ then $\tau^{-1} \in \mathcal{T}$,
	 	 	\item if $\tau, \tau' \in \mathcal{T}$, with $\codom{\tau} = \dom{\tau'}$, if $\tunnelmagnitude{\tau;\varepsilon_1} \leq \varepsilon_1$ for some $\varepsilon_1 > 0$, if $\tunnelmagnitude{\tau';\varepsilon_2} \leq \varepsilon_2$ for some $\varepsilon_2 > 0$, and if $\varepsilon > 0$, then there exists a tunnel $\tau\circ_\varepsilon\tau' \in \mathcal{T}$ from $\dom{\tau}$ to $\codom{\tau'}$ such that
	 		\begin{equation*}
	 			\tunnelmagnitude{\tau\circ_\varepsilon\tau';\varepsilon_1 + \varepsilon_2 + \varepsilon} \leq \varepsilon_1 + \varepsilon_2 + \varepsilon \text.
	 		\end{equation*}
	 	\end{enumerate} 
	 \end{definition}

\begin{remark}
	It is important to note that an appropriate class of tunnels consists entirely of $(C,D,G,H)$ tunnels for some fixed $C\geq 1$, $D\geq 0$, $G\geq\kappa^{-1}+2$ and $H\geq 2$ common to \emph{all} the tunnels in $\mathcal{T}$. This uniformity is essential to obtain the desired coincidence property for the spectral propinquity in general.
\end{remark}

We then proceed as usual to define the spectral propinquity:
\begin{definition}\label{spectral-prop-def}
	Let $\mathcal{C}$ be a nonempty class of $\kappa$-metric twisted spectral triples. Let $\mathcal{T}$ be a $\mathcal{C}$-appropriate class of tunnels. The \emph{$\mathcal{T}$-spectral propinquity} between any two $\kappa$-metric twisted spectral triples $(\A_1,\Hilbert_1,\Dirac_1,\rho_1)$ and $(\A_2,\Hilbert_2,\Dirac_2,\rho_2)$ in $\mathcal{C}$ is the real number:
	\begin{multline*}
		\spectralpropinquity{\mathcal{T}}((\A_1,\Hilbert_1,\Dirac_1,\rho_1), (\A_2,\Hilbert_2,\Dirac_2,\rho_2)) = 
		\inf\Bigg\{ \varepsilon>0 : \tunnelmagnitude{\tau ; \varepsilon}\leq\varepsilon, \\
		 \tau \text{ a tunnel in $\mathcal{T}$ from }(\A_1,\Hilbert_1,\Dirac_1,\rho_1) \text{ to }(\A_2,\Hilbert_2,\Dirac_2,\rho_2) \Bigg\} \text,
	\end{multline*}
	where $\tunnelmagnitude{\tau ; \varepsilon}$ is defined by Expression \eqref{magnitude-eq}.
\end{definition}

A consequence of our definition of an appropriate class leads to the following.
\begin{proposition}
	Let $\mathcal{C}$ be a nonempty class of $\kappa$-metric twisted spectral triples. Let $\mathcal{T}$ be a $\mathcal{C}$-appropriate class of tunnels. The following assertions hold:
	\begin{enumerate}
		\item if $(\A_1,\Hilbert_1,\Dirac_1,\rho_1)$ and $(\A_2,\Hilbert_2,\Dirac_2,\rho_2)$ are in $\mathcal{C}$ are unitarily equivalent,
		\begin{equation*}
			\spectralpropinquity{\mathcal{T}}((\A_1,\Hilbert_1,\Dirac_1,\rho_1),(\A_2,\Hilbert_2,\Dirac_2,\rho_2)) = 0 \text;
		\end{equation*}
		\item if $(\A_1,\Hilbert_1,\Dirac_1,\rho_1)$ and $(\A_2,\Hilbert_2,\Dirac_2,\rho_2)$ are in $\mathcal{C}$ then
		\begin{equation*}
			\spectralpropinquity{\mathcal{T}}((\A_2,\Hilbert_2,\Dirac_2,\rho_2),(\A_1,\Hilbert_1,\Dirac_1,\rho_1)) = \spectralpropinquity{\mathcal{T}}((\A_1,\Hilbert_1,\Dirac_1,\rho_1),(\A_2,\Hilbert_2,\Dirac_2,\rho_2)) \text.
		\end{equation*}
	\end{enumerate}
\end{proposition}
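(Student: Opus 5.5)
For assertion (1), I use Condition (2) of a $\mathcal{C}$-appropriate class: $\mathcal{T}$ contains the trivial tunnel
\begin{equation*}
	\tau \coloneqq \left( \mcc{\A_1}{\Hilbert_1}{\Dirac_1,\rho_1}, (\mathrm{id}_{\Hilbert_1},\mathrm{id}_{\A_1},\mathrm{id}_\C), (U, U\cdot U^\ast, \mathrm{id}_\C) \right),
\end{equation*}
where $U$ implements the unitary equivalence. The goal is to show $\tunnelmagnitude{\tau;\varepsilon} = 0$ for every $\varepsilon>0$. The infimum in Definition \ref{spectral-prop-def} is then over all $\varepsilon>0$, so the propinquity vanishes. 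The extent is $0$ because the extent of a trivial tunnel is $0$. This leaves the two separation terms.

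Here $\Psi_1 = \mathrm{id}$, $\Psi_2 = U$, and $\TDN = \CDN_1$. For the Dirac term, I first need that $U$ intertwines the functional calculus: from $U\Dirac_1U^\ast = \Dirac_2$ with equal domains, I get $Ue^{it\Dirac_1}U^\ast = e^{it\Dirac_2}$. Hence, for all $\omega,\eta$,
\begin{equation*}
	\inner{e^{it\Dirac_2}U\omega}{U\eta}{\Hilbert_2} = \inner{e^{it\Dirac_1}\omega}{\eta}{\Hilbert_1}.
\end{equation*}
The two families of functionals indexed by $t\in[0,\varepsilon^{-1}]$ therefore coincide pointwise, and their Hausdorff distance is $0$.

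For the twist term, take $d\in B(\tau)$, so $d \in \dom{\Lip_1} = (\A_1)_{\Dirac_1}$, $\pi_1(d) = d$ and $\pi_2(d) = UdU^\ast$. Condition (4) of Definition \ref{unitary-eq-def} gives
\begin{equation*}
	\rho_2(UdU^\ast) = U\rho_1(d)U^\ast,
\end{equation*}
and so $\rho_2(\pi_2(d))^\ast = U\rho_1(d)^\ast U^\ast$. It follows that
\begin{equation*}
	\inner{\rho_2(\pi_2(d))^\ast U\omega}{U\eta}{\Hilbert_2} = \inner{\rho_1(d)^\ast\omega}{\eta}{\Hilbert_1},
\end{equation*}
and again the families coincide. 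The only point needing care is that $\pi_2(d)$ lies in the domain of $\rho_2$. I expect this to follow because conjugation by $U$ carries $\dom{\Dirac_1}$ to $\dom{\Dirac_2}$ and conjugates the twisted commutators, using (4) once more. I regard this bookkeeping as the main (mild) obstacle.

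For assertion (2), Condition (3) ensures that $\tau\in\mathcal{T}$ implies $\tau^{-1}\in\mathcal{T}$, and $\tau\mapsto\tau^{-1}$ is an involutive bijection between tunnels in each direction. It remains to check $\tunnelmagnitude{\tau^{-1};\varepsilon} = \tunnelmagnitude{\tau;\varepsilon}$. The extent is symmetric, as recorded in the remark on $\tunnelextent{\tau^{-1}}$. Each separation term is a Hausdorff distance between two sets computed over the same $\mathds{P}$, with the same $\TDN$ and the same ball $B(\tau)$; swapping $\Pi_1$ and $\Pi_2$ merely swaps the two sets. Since the Hausdorff distance is symmetric, the sets of admissible $\varepsilon$ coincide, and so do the infima.
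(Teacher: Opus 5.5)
Your proposal is correct and takes the same approach as the paper. The paper's proof simply observes that the magnitude of a trivial tunnel vanishes for every $\varepsilon>0$ and that $\tunnelmagnitude{\tau^{-1};\varepsilon}=\tunnelmagnitude{\tau;\varepsilon}$, and you have written out the verifications it calls immediate. The domain point you flag is not really an obstacle: condition (4) of Definition \ref{unitary-eq-def} is an identity of maps on $\A_{\Dirac}$, so it already asserts that $\pi$ maps $\A_{\Dirac}$ into the domain of the second twist. Equivalently, $U\cdot U^\ast$ being a quantum isometry in the trivial tunnel gives $\pi_2(\dom{\Lip_1})=\dom{\Lip_2}$. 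By contrast, your suggestion of conjugating twisted commutators would presuppose that $\rho_2(UdU^\ast)$ is already defined.
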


\begin{proof}
	These are all immediate consequences of the definition of an appropriate class of tunnels, and the observations that:
	\begin{enumerate}
		\item $\tunnelmagnitude{\tau;\varepsilon} = 0$ for all $\varepsilon > 0$ and for all trivial tunnel $\tau$ in $\mathcal{T}$,
		\item $\tunnelmagnitude{\tau^{-1};\varepsilon} = \tunnelmagnitude{\tau;\varepsilon}$ for all $\tau \in \mathcal{T}$, as is immediate by Expression \eqref{magnitude-eq}.
	\end{enumerate}
	Our proof follows.
\end{proof}

We check that, in particular, for given $0<\kappa\leq1$, $C\geq 1$, $D\geq 0$, $G\geq\kappa^{-1}+2$ and $H\geq 2$, the class of all $(C,D,G,H)$-relaxed metrical C*-correspondences is appropriate over the class of all $\kappa$-metric twisted spectral triples.

\begin{proposition}
	If $0<\kappa\leq1$ and $G\geq\kappa^{-1}+2$, then for all $C\geq 1$, $D\geq 0$ and $H\geq 2$, the class of all $(C,D,G,H)$ metrical tunnels is appropriate for the class of all $\kappa$-metric twisted spectral triples.
\end{proposition}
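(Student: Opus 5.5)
We verify the four conditions defining a $\mathcal{C}$-appropriate class of tunnels, with $\mathcal{C}$ the class of all $\kappa$-metric twisted spectral triples and $\mathcal{T}$ the class of all $(C,D,G,H)$-tunnels. Throughout, $\mcc{\A}{\Hilbert}{\Dirac,\rho}$ is a $(1,0,\kappa^{-1}+2,2)$-relaxed metrical C*-correspondence by the theorem above. Since the constraints on $G$ and $H$ only weaken as the parameters grow, it is also a $(C,D,G,H)$-relaxed metrical C*-correspondence: the base $(\C,0)$ is trivially $(C,D)$-Leibniz, and $G\geq\kappa^{-1}+2$, $H\geq 2$.

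For condition (2), the trivial tunnel built on $\mcc{\A}{\Hilbert}{\Dirac,\rho}$ itself is a $(C,D,G,H)$-tunnel. It remains to check that $(U,U\cdot U^\ast,\mathrm{id}_\C)$ is a quantum isometry. Since $U$ is unitary with $U\Dirac U^\ast=\Dirac[\nabla]$, we get $\CDN_2(U\xi)=\CDN_1(\xi)$. By condition (4) of Definition \ref{unitary-eq-def}, $\Lip_2\circ\pi=\Lip_1$, because both the twisted commutator and $\rho(a)-a$ are conjugated by $U$. Condition (3) follows because $\tau^{-1}$ is built on the same correspondence $\mathds{P}$ with the quantum isometries swapped, so it is again a $(C,D,G,H)$-tunnel.

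Condition (1) asks for existence of a tunnel between any two members of $\mathcal{C}$. I would use the direct sum $\mathds{P}=(\Hilbert_1\oplus\Hilbert_2,\TDN,\A_1\oplus\A_2,\Lip[T],\C\oplus\C,\Lip[S])$. Fix a state $\varphi_1$ of $\A_1$ and a state $\varphi_2$ of $\A_2$, and a constant $\lambda>0$ exceeding both diameters. Set
\begin{equation*}
\Lip[T](a,b)=\max\{\Lip_1(a),\Lip_2(b),\lambda^{-1}|\varphi_1(a)-\varphi_2(b)|\}, \qquad \Lip[S](x,y)=|x-y|,
\end{equation*}
and $\TDN(\xi,\eta)=\max\{\CDN_1(\xi),\CDN_2(\eta)\}$. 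The steps to check are the following.
\begin{enumerate}
\item $\Lip[T]$ is a relaxed L-seminorm. This follows the standard argument of \cite{Latremoliere13b}, and no Leibniz property is needed since $\A_1\oplus\A_2$ only needs to be relaxed.
\item $(\C^2,\Lip[S])$ is $(1,0)$-Leibniz, hence $(C,D)$-Leibniz.
\item The modular Leibniz inequality with constant $G$ holds componentwise.
\item The inner Leibniz inequality holds. Here $\inner{(\xi,\eta)}{(\xi',\eta')}{}=(\inner{\xi}{\xi'}{},\inner{\eta}{\eta'}{})$, and $\Lip[S]$ of it is bounded by $|\inner{\xi}{\xi'}{}|+|\inner{\eta}{\eta'}{}|\leq 2\TDN\TDN$, since $\CDN$ dominates the Hilbert norm. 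This is where $H\geq 2$ enters.
\item The coordinate projections are quantum isometries. For the algebra part we use the usual choice of $b$ with $\varphi_2(b)=\varphi_1(a)$ modulo constants. For $\Psi_j$ we take the coordinate projection, extending by $0$, and for $\theta_j$ the coordinate maps, which are quantum isometries onto $(\C,0)$ by extending with an equal scalar.
\end{enumerate}
The magnitude need not be finite-small, merely defined, so this suffices.

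Condition (4), composition, is the main obstacle. Following \cite{Latremoliere18g,Latremoliere22}, given $\tau=(\mathds{P},\Pi_1,\Pi_2)$ and $\tau'=(\mathds{P}',\Pi_2',\Pi_3')$, I would form the correspondence $\mathds{P}\oplus\mathds{P}'$ with the seminorms
\begin{equation*}
\max\{\Lip[T](d),\Lip[T]'(d'),\varepsilon^{-1}\opnorm{\cdot}{}{}\text{-bridge term}\}
\end{equation*}
coupling $\pi_2(d)$ and $\pi_2'(d')$, and analogously for $\TDN$ via $\Psi_2,\Psi_2'$. I would then invoke the triangle-type estimate for extents and separations already proven there. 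The new twist term is a Hausdorff distance of families indexed by $B(\tau)$, and it should obey the same triangle inequality. The estimate runs as follows. For $d''=(d,d')$ in the ball of the composite, $d\in B(\tau)$ and $d'\in B(\tau')$, and the coupling forces $\pi_2(d)$ and $\pi_2'(d')$ to be $\varepsilon$-close, so that by the closed-graph and norm-continuity bound \eqref{augmented-twist-bound-eq} the values $\rho_2(\pi_2(d))$ and $\rho_2(\pi_2'(d'))$ are $(1+\kappa^{-1})\varepsilon$-close. The factor $\frac{1}{GH}$ with $G\geq\kappa^{-1}+2$ should absorb this constant. Verifying that the composite remains a $(C,D,G,H)$-correspondence, especially the modular Leibniz constant $G$ with the coupling term, is the step I expect to require the most care. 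I would first try to reuse verbatim the construction of \cite[Theorem 3.12]{Latremoliere22}, checking only that no step there uses the Leibniz property of $\Lip[T]$ beyond what the relaxed setting provides.
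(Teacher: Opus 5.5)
\textbf{The proposal has a gap at the one step where the twist enters condition (4).} Your treatment of conditions (1)--(3) is fine, and more explicit than the paper, which calls them trivial. For (4) you start, as the paper does, from the composite of \cite[Theorem 3.21]{Latremoliere18g} on $\mathds{P}\oplus\mathds{P}'$, with a C*-norm coupling of $\pi_2(d)$ and $\pi_2'(d')$.

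The problem is your claim that this coupling makes $\rho_2(\pi_2(d))$ and $\rho_2(\pi_2'(d'))$ $(1+\kappa^{-1})\varepsilon$-close. Set $x\coloneqq\pi_2(d)-\pi_2'(d')$. Expression \eqref{augmented-twist-bound-eq} only gives $\opnorm{\rho_2(x)}{}{\Hilbert_2}\leq\norm{x}{\A_2}+\kappa^{-1}\Lip_2(x)$. Nothing in your composite makes $\Lip_2(x)$ small: you only know $\Lip_2(x)\leq\TLip(d)+\TLip'(d')\leq 2$. So the bound you actually get is $\varepsilon+2\kappa^{-1}$, which does not tend to $0$.

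The twist is continuous for the graph norm $\max\{\norm{\cdot}{\A_2},\Lip_2\}$, not for the C*-norm. The closed-graph condition is purely qualitative. For unbounded twists, which this framework is designed to allow, norm-closeness of the arguments gives no quantitative control of the twists. Hence the middle step of your triangle estimate for the twist separation is unsupported, and there is no constant for $\frac{1}{GH}$ to absorb.

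\textbf{The missing idea is to build the twist coupling into the composite itself, as the paper does.} Replace the L-seminorm $\TLip''$ of the composite from \cite[Theorem 3.21]{Latremoliere18g} by
\[
(d,d')\longmapsto\max\left\{\TLip''(d,d'),\ \frac{1}{\varepsilon}\opnorm{\rho_2(\pi_2(d))-\rho_2(\pi_2'(d'))}{}{\Hilbert_2}\right\}.
\]
This change costs nothing, for the following reasons.
\begin{enumerate}
\item In the lifting argument showing the coordinate maps are quantum isometries, one lifts exactly through the middle algebra. So $\pi_2(d)=\pi_2'(d')$ there, and the new term vanishes.
\item The new seminorm dominates $\TLip''$. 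Hence the modular Leibniz inequality survives, since its right-hand side only grows, and the extent can only decrease.
\item The Dirac separation involves only $\TDN$, so it is unchanged.
\item Lower semicontinuity of the new term follows from the closed graph of $\rho_2$, exactly as in the proof that $\Lip$ is lower semicontinuous.
\end{enumerate}
On the new unit ball the middle twists are $\varepsilon$-close in operator norm. The remaining cross term, from the coupling of test vectors, is controlled by $\opnorm{\rho_2(\pi_2(d))}{}{\Hilbert_2}\leq 1+\kappa^{-1}\leq G$. Together these give the triangle estimate $GH(\varepsilon_1+\varepsilon_2+\varepsilon)$ for the twist separation.

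Keeping your unmodified composite would instead require a separate argument. For example, you could use compactness of the relevant balls together with the closed graph of $\rho_2$ to choose the coupling constant depending on the middle triple. Your proposal supplies no such argument.
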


\begin{proof}
	All conditions for our class of tunnels to be appropriate are trivially met, except the fourth condition on the composition, up to an arbitrary $\varepsilon >0$, of two tunnels. However, most of this condition is also known to hold, as our starting point for this proof is \cite[Theorem 3.21]{Latremoliere18g}. Please note that we will use the notation of \cite{Latremoliere18g}, so that the magnitude defined there for tunnels is denoted by $\tunnelmodmagnitude{\cdot}{\cdot}$; this quantity is given by removing from Expression \eqref{magnitude-eq} the separation term involving the twists.
	
		Indeed, by that result, given three $\kappa$-metric twisted spectral triples $(\A_j,\Hilbert_j,\Dirac_j,\rho_j)$ for $j \in \{1,2,3\}$, a $(C,D,G,H)$-tunnel $\tau_1$ from $(\A_1,\Hilbert_1,\Dirac_1,\rho_1)$ to $(\A_2,\Hilbert_2,\Dirac_2,\rho_2)$ and a tunnel $\tau_2$ from $(\A_2,\Hilbert_2,\Dirac_2,\rho_2)$ to $(\A_3,\Hilbert_3,\Dirac_3,\rho_3)$, and
	\begin{enumerate}
	 	\item any $\varepsilon > 0$,
	 	\item any $\varepsilon_1>  0$ and $\varepsilon_2>  0$ such that $\tunnelmodmagnitude{\tau_1}{\varepsilon_1}\leq\varepsilon_1$ and $\tunnelmodmagnitude{\tau_2}{\varepsilon_2}\leq\varepsilon_2$,
	 \end{enumerate}
	 then there exists a tunnel $\tau_3$ from $\dom{\tau_1}$ to $\codom{\tau_2}$ such that 
	 \begin{equation*}
\tunnelmodmagnitude{\tau_3}{\varepsilon_1 + \varepsilon_2 + \varepsilon}\leq\varepsilon_1 + \varepsilon_2 + \varepsilon \text.
	\end{equation*}
		The only caveat, once more, is that $\tau_3 = (\mathds{P}_3, \Pi_{3,1},\Pi_{3,2})$ where $\mathds{P}_3$ is now a relaxed metrical C*-correspondence (note that, if in fact, $\tau_1$ and $\tau_2$ are built using actual metrical C*-correspondences, then $\mathds{P}_3$ would belong to the same class, but this is not important here).

	We now make one simple modification to our tunnel $\tau_3$. To this end, we need a few notation from \cite{Latremoliere18g}, though we will try to only involve what we need. First, write $\tau_j = (\mathds{P}_j, \Pi_{j,1}, \Pi_{j,2})$ for $j \in \{1,2,3\}$, where
	\begin{equation*}
		\mathds{P}_j = \left( \module{P}_j, \TDN_j, \D_j, \TLip_j, \alg{E}_j, \Lip[Q]_j \right) 
	\end{equation*}
	and $\Pi_{a,b} = (\Psi_{a,b}, \pi_{a,b}, \theta_{a,b})$ for any indices $a$ in $\{1,2,3\}$ and $b\in\{1,2\}$.
	
	The key aspect of \cite[Theorem 3.21]{Latremoliere18g} we need here is simply that $\D_3 = \D_1 \oplus \D_2$. With this in mind, we now construct a new tunnel $\tau_4$ by setting:
	\begin{equation*}
		\tau_4 \coloneqq \left( \mathds{P}_4, \Pi_{3,1}, \Pi_{3,2} \right)
	\end{equation*}
	with
	\begin{equation*}
		\mathds{P}_4 = \left( \module{P}_3, \TDN_3, \D_3, \TLip_4, \alg{E}_3, \Lip[Q]_3\right)
	\end{equation*} 
	where, for all $(d_1,d_2) \in \dom{\TLip_3}$, we set:
	\begin{equation*}
		\TLip_4(d_1,d_2) \coloneqq \max\left\{ \TLip_3(d_1,d_2), \frac{1}{\varepsilon} \opnorm{\rho_2\circ\pi_{1,2}(d_1) - \rho_2\circ\pi_{2,1}(d_2)}{}{\Hilbert_2} \right\}\text.
	\end{equation*}
		In plain language, the only change from $\tau_3$ to $\tau_4$ is the replacement of $\TLip_3$ as constructed in \cite[Theorem 3.21]{Latremoliere18g} with $\TLip_4$.
	
		Since $\TLip_4\geq\TLip_3$ on their common domain $\dom{\TLip_3}$, we conclude immediately from the proof of \cite[Theorem 3.21]{Latremoliere18g} that indeed, $\tau_4$ is again a $(C,D,G,H)$-tunnel from $\dom{\tau_1}$ to $\codom{\tau_2}$, and all the computations there remain valid, again with the exception that $\mathds{P}_4$ is now a relaxed C*-metrical correspondence.
	
	It is now an easy, though notationally heavy computation to check that:
	\begin{equation*}
		\tunnelsep{(\rho_1(\pi_{3,1}(d))^\ast)_{d \in B(\tau_4)}}{(\rho_3(\pi_{3,2}(d))^\ast)_{d \in B(\tau_4)}} \leq GH(\varepsilon_1 + \varepsilon_2 + \varepsilon) \text.
	\end{equation*}
	using the fact that the Hausdorff distance is indeed a metric. We will leave this check to the reader.
	
		The tunnel $\tau_4$ thus has the desired properties.
\end{proof}

\begin{notation}
	For any $0<\kappa\leq1$, we define $\spectralpropinquity{\kappa}$ as $\spectralpropinquity{\mathcal{T}_\kappa}$ where $\mathcal{T}_\kappa$ is the class of \emph{all} $(1,0,\kappa^{-1}+2,2)$ tunnels. This is the natural choice when working with $\kappa$-metric twisted spectral triples. However, when working with untwisted spectral triples, it is natural to restrict ourselves to the class of all $(1,0,1,0,2,2)$ tunnels, i.e. to metrical C*-correspondences built with Leibniz {\qcms s}. 
\end{notation}

We now reconcile our new definition of the spectral propinquity with our previous one, when working with (untwisted) metric spectral  triples. To this end, we establish two lemmas.

\begin{lemma}\label{zero-twist-lemma}
Let $\mathds{M}_j \coloneqq (\module{M}_j, \CDN_j, \A_j, \Lip_j, \B_j, \Lip[S]_j)$ be a relaxed $(C,D,G,H)$-metrical C*-correspondence for each $j \in \{1,2\}$. Let $(\mathds{P},(\Psi_1,\pi_1,\theta_1), (\Psi_2,\pi_2,\theta_2))$ be a tunnel from $\mathds{M}_1$ to $\mathds{M}_2$, with $\mathds{P} = (\module{P},\TDN,\D,\TLip,\alg{E},\Lip[Q])$ a relaxed $(C,D,G,H)$-metrical C*-correspondence. If $\omega,\zeta\in\dom{\TDN}$, then:
	\begin{equation*}
		\left|\theta_1(\inner{\zeta}{\omega}{\module{P}}) - \theta_2(\inner{\zeta}{\omega}{\module{P}})\right| \leq H \TDN(\zeta) \TDN(\omega) \tunnelextent{\tau} \text.
	\end{equation*}
\end{lemma}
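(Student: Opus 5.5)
The plan is to reduce the estimate to the Monge–Kantorovich metric on the base tunnel $(\alg{E},\Lip[Q],\theta_1,\theta_2)$. I read the statement as the paper uses it: the base algebras $\B_1,\B_2$ are both $\C$, as for $\mcc{\A}{\Hilbert}{\Dirac,\rho}$, so that $\theta_1$ and $\theta_2$ are unital *-morphisms $\alg{E}\to\C$, i.e. states (characters) of $\alg{E}$. Two facts then combine. The inner Leibniz property of $\mathds{P}$ bounds $\Lip[Q]$ of the inner product. The extent of $\tau$ bounds $\Kantorovich{\Lip[Q]}(\theta_1,\theta_2)$.

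\textbf{Step 1.} Set $e\coloneqq\inner{\zeta}{\omega}{\module{P}}$. By the inner Leibniz condition in the definition of a relaxed $(C,D,G,H)$-metrical C*-correspondence, $e\in\dom{\Lip[Q]}$ and $\Lip[Q](e)\leq H\,\TDN(\zeta)\TDN(\omega)$.

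\textbf{Step 2.} The extent of $\tau$ dominates the extent of $(\alg{E},\Lip[Q],\theta_1,\theta_2)$. Hence the Hausdorff distance, for $\Kantorovich{\Lip[Q]}$, between $\StateSpace(\alg{E})$ and $\{\varphi\circ\theta_2:\varphi\in\StateSpace(\C)\}=\{\theta_2\}$ is at most $\tunnelextent{\tau}$. Since $\theta_1\in\StateSpace(\alg{E})$, this gives $\Kantorovich{\Lip[Q]}(\theta_1,\theta_2)\leq\tunnelextent{\tau}$.

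\textbf{Step 3.} Deduce the desired bound from the definition of the Monge–Kantorovich metric, using homogeneity:
\begin{equation*}
|\theta_1(e')-\theta_2(e')|\leq\Lip[Q](e')\,\Kantorovich{\Lip[Q]}(\theta_1,\theta_2)
\end{equation*}
for every self-adjoint $e'\in\dom{\Lip[Q]}$.

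\textbf{Main obstacle.} The inner product $e$ need not be self-adjoint, while $\Lip[Q]$ and $\Kantorovich{\Lip[Q]}$ are defined through the self-adjoint part. I would handle this with a rotation rather than a real/imaginary split, since the split would cost a factor $2$.

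Choose $\lambda\in\T$ with $\lambda(\theta_1(e)-\theta_2(e))\in[0,\infty)$. Because $\theta_1,\theta_2$ are *-morphisms, this number equals $\theta_1(a)-\theta_2(a)$, where $a\coloneqq\frac{\lambda e+(\lambda e)^\ast}{2}$ is self-adjoint. The real part of $\theta_1(\lambda e)-\theta_2(\lambda e)$ is the full difference, since that difference is already real.

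It then remains to check that $\Lip[Q](a)\leq\Lip[Q](\lambda e)=\Lip[Q](e)$. This follows from the usual convention that the L-seminorm is a *-invariant seminorm extended to the whole domain, as in the paper's *-seminorm remark for $\Lip_{k,\sigma}$. Should that convention be unavailable, I would instead take the inner Leibniz inequality as controlling the self-adjoint parts of $\inner{\zeta}{\omega}{\module{P}}$ directly.

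Combining Steps 1–3 with this rotation gives
\begin{equation*}
|\theta_1(e)-\theta_2(e)|\leq H\,\TDN(\zeta)\TDN(\omega)\,\tunnelextent{\tau}\text{,}
\end{equation*}
which is the statement.
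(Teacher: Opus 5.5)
Your proposal is correct and follows the paper's proof. The inner Leibniz inequality bounds $\Lip[Q](\inner{\zeta}{\omega}{\module{P}})$ by $H\TDN(\zeta)\TDN(\omega)$, the extent of $\tau$ bounds $\Kantorovich{\Lip[Q]}(\theta_1,\theta_2)$ (you rightly state this as an inequality, where the paper writes an equality), and homogeneity finishes. Your rotation step for non-self-adjoint inner products is a careful extra that the paper skips. The paper does not need it because its Monge--Kantorovich metric is a supremum over all of $\dom{\Lip[Q]}$, which by the inner Leibniz property already contains the inner products.
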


\begin{proof}
Since $\theta_1 : \alg{E} \to \C$ and $\theta_2 : \alg{E}\to \C$ are, respectively, the unique lifts of the unique state of $\C$ as states of $\alg{E}$ in our tunnel $\tau$, we have by definition of the extent of the tunnel $(\alg{E},\Lip[Q],\theta_1,\theta_2)$ from $\C$ to $\C$ that
\begin{equation}\label{zero-twist-eq-5}
	\Kantorovich{\Lip[Q]}(\theta_{1}, \theta_{2}) = \tunnelextent{(\alg{E},\Lip[Q],\theta_{1},\theta_{2})} \leq \tunnelextent{\tau} \text.
\end{equation}
In particular, by the inner Leibniz inequality, for all $\omega , \zeta \in \dom{\TDN}$, the following holds:
\begin{equation*}
	\Lip[Q](\inner{\zeta}{\omega}{\module{P}}) \leq H \TDN(\zeta)\TDN(\omega) \leq H  \text.
\end{equation*}	
	Therefore, applying Expression \eqref{zero-twist-eq-5}, we get by homogeneity that
	\begin{equation*}
		\left| \theta_{2}(\inner{\zeta}{\omega}{\module{P}}) - \theta_{1}(\inner{\zeta}{\omega}{\module{P}})\right| \leq H  \TDN(\zeta)\TDN(\omega) \tunnelextent{\tau} \text,
	\end{equation*}
	as desired.
\end{proof}

\begin{lemma}\label{sanity-lemma}
Let $\mathds{M}_j \coloneqq (\module{M}_j, \CDN_j, \A_j, \Lip_j, \B_j, \Lip[S]_j)$ be a relaxed $(C,D,G,H)$-metrical C*-correspondence for each $j \in \{1,2\}$. Let $(\mathds{P},(\Psi_1,\pi_1,\theta_1), (\Psi_2,\pi_2,\theta_2))$ be a tunnel from $\mathds{M}_1$ to $\mathds{M}_2$, with $\mathds{P} = (\module{P},\TDN,\D,\TLip,\alg{E},\Lip[Q])$ a relaxed $(C,D,G,H)$-metrical C*-correspondence. We then have:
	\begin{equation*}
		\tunnelsep{(\pi_1(d))_{\TLip(d)\leq 1}}{(\pi_2(d))_{\TLip(d)\leq 1}} \leq G H \tunnelextent{\tau} \text.
	\end{equation*}
\end{lemma}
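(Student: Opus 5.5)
The plan is to bound the Hausdorff distance by pairing each point of one set with the point of the other set obtained from the \emph{same} vector $\omega\in\module{P}$. That is, for every $\omega\in\dom{\TDN}$ with $\TDN(\omega)\leq 1$, we compare the family $\left(\inner{\pi_1(d)\Psi_1(\omega)}{\Psi_1(\cdot)}{\module{M}_1}\right)_d$ with $\left(\inner{\pi_2(d)\Psi_2(\omega)}{\Psi_2(\cdot)}{\module{M}_2}\right)_d$. We show that their $\Kantorovich{\TDN,J}$-distance is at most $GH\tunnelextent{\tau}$. Since the matching is symmetric in the indices $1$ and $2$, both one-sided Hausdorff deviations are then bounded by $GH\tunnelextent{\tau}$, which gives the claim.

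The key steps, in order, are as follows. Fix $d$ in the index set, $\omega$ with $\TDN(\omega)\leq1$, and $\eta$ with $\TDN(\eta)\leq 1$.
\begin{enumerate}
\item Use Condition (1) of a quantum isometry, $\Psi_j(d\omega)=\pi_j(d)\Psi_j(\omega)$, together with Condition (2), to rewrite
\begin{equation*}
\inner{\pi_j(d)\Psi_j(\omega)}{\Psi_j(\eta)}{\module{M}_j}=\theta_j\left(\inner{d\omega}{\eta}{\module{P}}\right)\text{ for }j\in\{1,2\}
\end{equation*}
(with the argument order of the inner product adjusted to the paper's convention).
\item By the modular Leibniz property of $\mathds{P}$, $d\omega\in\dom{\TDN}$ and $\TDN(d\omega)\leq G\max\{\norm{d}{\D},\TLip(d)\}\TDN(\omega)$.
\item Apply Lemma \ref{zero-twist-lemma} to the pair $d\omega,\eta$. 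This gives
\begin{equation*}
\left|\theta_1(\inner{d\omega}{\eta}{\module{P}})-\theta_2(\inner{d\omega}{\eta}{\module{P}})\right|\leq H\,\TDN(d\omega)\,\TDN(\eta)\,\tunnelextent{\tau}\leq GH\max\{\norm{d}{\D},\TLip(d)\}\tunnelextent{\tau}\text.
\end{equation*}
\item Take the supremum over $\eta$ and over $d$ to obtain the pointwise bound on $\Kantorovich{\TDN,J}$.
\end{enumerate}

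The main obstacle is step 2: the modular Leibniz inequality involves $\max\{\norm{d}{\D},\TLip(d)\}$ rather than $\TLip(d)$ alone. So the estimate only closes when the index set also satisfies $\norm{d}{\D}\leq1$, that is, when $d$ ranges over $B(\tau)$. Indeed, replacing $d$ by $d+\lambda\unit_\D$ leaves $\TLip(d)$ unchanged but shifts the difference of the two functionals by $\lambda(\theta_1-\theta_2)(\inner{\omega}{\eta}{\module{P}})$, which is unbounded in $\lambda$. I would therefore read the index set $\{\TLip(d)\leq1\}$ as $B(\tau)$, which is the set actually used in the magnitude \eqref{magnitude-eq}, and carry out the argument there. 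This yields exactly the constant $GH$ that is divided out in the twist term of \eqref{magnitude-eq}.
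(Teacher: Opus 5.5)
Your argument is correct and is essentially the paper's proof. The paper lifts a unit vector $\xi$ to some $\chi$ with $\TDN(\chi)=1$ and pairs it with $\Psi_2(\chi)$, which is your ``same $\omega$'' matching. It then obtains $\zeta=d\chi$ with $\TDN(\zeta)\leq G$ via \cite[Proposition 4.7]{Latremoliere18d}, which is exactly your modular Leibniz plus module-map step, and concludes with Lemma \ref{zero-twist-lemma}. Your reading of the index set as $B(\tau)$ matches what the paper's proof actually uses, and your $d+\lambda\unit_\D$ remark correctly explains why the normalization $\norm{d}{\D}\leq 1$ cannot be dropped.
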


\begin{proof}
	Let $\xi \in \dom{\Dirac_1}$ with $\CDN_1(\xi) = 1$. Since $\Psi_1$ is a quantum isometry, there exists $\chi \in \dom{\TDN}$ with $\TDN(\chi) = 1$ and $\Psi_1(\chi) = \xi$. Let $\eta = \Psi_2(\chi)$ (in the terminology of \cite{Latremoliere18d}, $\eta \in \targetsettunnel{\tau}{\xi}{1}$).

	Let $d\in \dom{\TDN}$ with $\max\{\norm{d}{\D}, \TLip(d)\} = 1$. Let $a \coloneqq \pi_1(d)$ and $b = \pi_2(d)$, so that $\max\{ \norm{a}{\A_1}, \Lip_1(a) \} \leq 1$ and $\max\{ \norm{b}{\A_1}, \Lip_1(b) \} \leq 1$. By \cite[Proposition 4.7]{Latremoliere18d}, there exists $\zeta \in \dom{\TDN}$ with $\TDN(\zeta) = G$, such that $\Psi_1(\zeta) = a\xi$ and $\Psi_2(\zeta) = b\eta$. In particular, for all $\omega \in \dom{\TDN}$ with $\TDN(\omega)\leq 1$,
	\begin{align*}
		\left| \inner{a\xi}{\Psi_1(\omega)}{\Hilbert_1} - \inner{b\eta}{\Psi_2(\omega)}{\Hilbert_2}\right| 
		&=\left|\inner{\Psi_1(\zeta)}{\Psi_1(\omega)}{\Hilbert_1} - \inner{\Psi_2(\zeta)}{\Psi_2(\omega)}{\Hilbert_2}\right|\\
		&=\left|\theta_1(\inner{\zeta}{\omega}{\module{P}}) - \theta_2(\inner{\zeta}{\omega}{\module{P}})\right|\\
		&\leq G H \tunnelextent{\tau} \text{ by Lemma \ref{zero-twist-lemma}.}
	\end{align*}
	In summary, we have shown that:
	\begin{equation*}
		\sup_{\substack{\xi \in \dom{\CDN_1} \\ \CDN_1(\xi)\leq 1}} \inf_{\substack{\eta \in \dom{\CDN_2} \\ \CDN_2(\eta) \leq 1}} \underbracket{\sup_{\substack{d \in B(\tau) \\ \omega \in \dom{\TDN} \\ \TDN(\omega)\leq 1}} \left|\inner{\pi_1(d)\xi}{\Psi_1(\omega)}{\Hilbert_1}-\inner{\pi_2(d)\eta}{\Psi_2(\omega)}{\Hilbert_2}\right|}_{=\Kantorovich{\TDN,B(\tau)}(\pi_1(d),\pi_2(d))} \leq G H \tunnelextent{\tau} \text.
	\end{equation*}
	
	By switching the roles of $\xi$ and $\eta$ above, and by definition of the Hausdorff distance, we therefore conclude:
	\begin{equation*}
		\tunnelsep{(\pi_1(d))_{d \in B(\tau)}}{(\pi_2(d))_{d \in B(\tau)}} \leq G H \tunnelextent{\tau}
	\end{equation*}
	as desired.
\end{proof}

\begin{corollary}
	For two metric spectral triples $(\A_1,\Hilbert_1,\Dirac_1)$ and $(\A_2,\Hilbert_2,\Dirac_2)$, we have:
	\begin{equation*}
		\underbracket{\spectralpropinquity{}((\A_1,\Hilbert_1,\Dirac_1),(\A_2,\Hilbert_2,\Dirac_2))}_{\text{usual spectral propinquity \cite{Latremoliere18g,Latremoliere22}}} = \underbracket{\spectralpropinquity{\mathcal{L}}((\A_1,\Hilbert_1,\Dirac_1,\mathrm{id}_{\A_1}), (\A_2,\Hilbert_2,\Dirac_2,\mathrm{id}_{\A_2}))}_{\text{twisted spectral propinquity with the identity as twist from Def. \ref{spectral-prop-def}}} \text,
	\end{equation*}
	where $\mathcal{L}$ is the class of all $(1,0,1,0,2,2)$ tunnels.
\end{corollary}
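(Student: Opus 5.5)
The plan is to show that, when both twists are the identity, the magnitude $\tunnelmagnitude{\tau;\varepsilon}$ of Expression \eqref{magnitude-eq} equals the magnitude $\tunnelmodmagnitude{\tau}{\varepsilon}$ of \cite{Latremoliere18g} for every $\tau\in\mathcal{L}$ and every $\varepsilon>0$. We also check that $\mathcal{L}$ is exactly the class of tunnels used to define the usual spectral propinquity. The two infima in Definition \ref{spectral-prop-def} then range over the same set, so they agree.

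First, the objects match. With $\rho=\mathrm{id}_{\A}$, the term $\kappa\opnorm{\rho(a)-a}{}{\Hilbert}$ vanishes and $[\Dirac,a]_\rho=[\Dirac,a]$. Hence $\Lip(a)=\opnorm{[\Dirac,a]}{}{\Hilbert}$, and $\mcc{\A}{\Hilbert}{\Dirac,\mathrm{id}}$ coincides with the metrical C*-correspondence of \cite[Theorem 2.7]{Latremoliere18g}. I would record here that in the untwisted case the modular Leibniz bound $\CDN(a\xi)\leq(2+\kappa^{-1})\max\{\norm{a}{\A},\Lip(a)\}\CDN(\xi)$ improves. Since $\opnorm{\rho(a)}{}{\Hilbert}=\norm{a}{\A}$, the constant is $2$, and the usual parameters $(1,0,1,0,2,2)$ apply. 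So tunnels in $\mathcal{L}$ are precisely the tunnels of \cite{Latremoliere18g,Latremoliere22}, and quantum isometries and extents are defined identically. The first two terms of $\tunnelmagnitude{\tau;\varepsilon}$ are literally $\tunnelmodmagnitude{\tau}{\varepsilon}$, so $\tunnelmagnitude{\tau;\varepsilon}\geq\tunnelmodmagnitude{\tau}{\varepsilon}$, which already gives one inequality between the propinquities.

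For the reverse inequality, I must show that the twist term is dominated by the extent:
\[
\frac{1}{GH}\tunnelsep{(\pi_1(d)^\ast)_{d\in B(\tau)}}{(\pi_2(d)^\ast)_{d\in B(\tau)}}\leq\tunnelextent{\tau}.
\]
Because $B(\tau)$ is closed under adjoints ($\TLip$ is a $\ast$-seminorm in the Leibniz setting, and $\pi_j$ are $\ast$-morphisms), the family indexed by $\pi_j(d)^\ast$ is a reindexing of the family indexed by $\pi_j(d^\ast)$ over the same ball. The separation is then that of $(\pi_1(d))_{d\in B(\tau)}$ versus $(\pi_2(d))_{d\in B(\tau)}$. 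Lemma \ref{sanity-lemma} bounds this by $GH\tunnelextent{\tau}$. Thus $\tunnelmagnitude{\tau;\varepsilon}=\tunnelmodmagnitude{\tau}{\varepsilon}$, and the sets $\{\varepsilon:\tunnelmagnitude{\tau;\varepsilon}\leq\varepsilon\}$ coincide tunnel by tunnel.

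The main obstacle is bookkeeping rather than substance. I need to confirm that the reindexing by adjoints leaves the Hausdorff distance unchanged; this holds because the index set maps bijectively onto itself and $\Kantorovich{\TDN,J}$ takes a supremum over all indices. I also need Lemma \ref{sanity-lemma} with the index set $B(\tau)$ that is actually used in its proof, as opposed to $\{\TLip\leq1\}$ as written in its statement. In addition, the $G$ appearing there must be the modular constant of the chosen tunnel class. Whichever constant we use, the bound $GH\tunnelextent{\tau}$ divided by $GH$ gives exactly $\tunnelextent{\tau}$, so the constants cancel.
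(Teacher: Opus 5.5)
Your proposal is correct and follows the paper's argument. The paper's one-line proof is exactly your observation: with identity twists, Lemma \ref{sanity-lemma} bounds the twist separation by $GH\,\tunnelextent{\tau}$, so the new term in Expression \eqref{magnitude-eq} never exceeds the extent, and hence the two magnitudes and the two infima coincide.

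Your additional bookkeeping is sound and supplies details the paper leaves implicit:
\begin{itemize}
\item the modular constant drops to $2$ when $\rho=\mathrm{id}$;
\item the lemma should be applied with the index set $B(\tau)$ that its proof actually uses;
\item the adjoints are harmless, either because L-seminorms are defined on self-adjoint elements, or by swapping the two vectors in the lemma's estimate.
\end{itemize}
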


\begin{proof}
	This follows immediately from Lemma \ref{sanity-lemma} and the definition of the magnitude in Expression \eqref{magnitude-eq} compared to \cite[Definition ]{Latremoliere22}.
\end{proof}

Now, we apply the method introduced in \cite{Latremoliere18d} and \cite{Latremoliere18g} to obtain the following theorem validating our choice of a definition for the propinquity between twisted metric spectral triples.
\begin{theorem}
Let $0<\kappa\leq1$, let $\mathcal{C}$ be a nonempty class of $\kappa$-metric twisted spectral triples, and let $\mathcal{T}$ be a $\mathcal{C}$-appropriate class of tunnels. For all $(\A_1,\Hilbert_1,\Dirac_1,\rho_1)$ and $(\A_2,\Hilbert_2,\Dirac_2,\rho_2)$ in $\mathcal{C}$,
	\begin{equation*}
		\spectralpropinquity{\mathcal{T}}((\A_1, \Hilbert_1, \Dirac_1, \rho_1),(\A_2, \Hilbert_2, \Dirac_2 ,\rho_2)) = 0
	\end{equation*}
	if, and only if $(\A_1,\Hilbert_1,\Dirac_1,\rho_1)$ and $(\A_2,\Hilbert_2,\Dirac_2,\rho_2)$ are unitarily equivalent twisted spectral triples.
\end{theorem}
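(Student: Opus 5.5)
The easy direction is immediate. If $U$ implements a unitary equivalence, the appropriate class $\mathcal{T}$ contains a trivial tunnel, whose magnitude vanishes for every $\varepsilon>0$. For the twist term, condition (4) of Definition \ref{unitary-eq-def} gives $U\rho_1(\pi_1(d))U^\ast=\rho_2(\pi_2(d))$ for $d\in B(\tau)$, so that separation term is also $0$. Hence the spectral propinquity is $0$. The real work is the converse.

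For the converse, I would follow the strategy of \cite{Latremoliere18d,Latremoliere18g}. Assume the propinquity is $0$ and pick tunnels $\tau_n\in\mathcal{T}$ with $\tunnelmagnitude{\tau_n;\varepsilon_n}\leq\varepsilon_n\to 0$. Since all tunnels in $\mathcal{T}$ share the same constants $(C,D,G,H)$, the target-set machinery of \cite{Latremoliere18d} applies to them uniformly. Its key ingredients are the modular Leibniz property (with constant $G$), the inner Leibniz property, and compactness of the $\CDN$-balls. Using it, I would extract by a diagonal/Arzel\`a--Ascoli argument over countable dense subsets a limit map $U$ on $\dom{\Dirac_1}$: $U\xi$ is the limit of elements of the target sets $\targetsettunnel{\tau_n}{\xi}{l}$.

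The inner products $\inner{\Psi_1(\omega)}{\Psi_1(\zeta)}{}$ and $\inner{\Psi_2(\omega)}{\Psi_2(\zeta)}{}$ are controlled by Lemma \ref{zero-twist-lemma}. It follows that $U$ preserves inner products and extends to an isometry $\Hilbert_1\to\Hilbert_2$; by symmetry, using $\tau_n^{-1}$, it is a unitary. The separation term for $e^{it\Dirac_j}$, taken over $t\in[0,\varepsilon_n^{-1}]$ with $\varepsilon_n\to0$, yields $Ue^{it\Dirac_1}=e^{it\Dirac_2}U$ for all $t\geq0$. By Stone's theorem this gives $U\dom{\Dirac_1}=\dom{\Dirac_2}$ and $U\Dirac_1U^\ast=\Dirac_2$, exactly as in \cite{Latremoliere18g}.

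Next comes the algebra isomorphism, which is where the missing Leibniz property must be circumvented. Instead of building $\pi$ via the dual propinquity, I would use Lemma \ref{sanity-lemma}: the separation of $(\pi_1(d))$ and $(\pi_2(d))$ over $B(\tau_n)$ is at most $GH\tunnelextent{\tau_n}\to0$. For $a\in\dom{\Lip_1}$ with $\max\{\norm{a}{},\Lip_1(a)\}\leq1$, lift $a$ to $d_n\in B(\tau_n)$ (up to a vanishing error) and let $b_n=\pi_2(d_n)$. The $b_n$ lie in a compact set, by the relaxed quantum compact metric structure: the $\Kantorovich{}$ metrization, together with bounded norm, gives total boundedness. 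So a subsequence converges to some $b$, and passing to the limit in the separation estimate gives $UaU^\ast=b$ as operators, since $\dom{\Dirac_1}$ is dense. Thus $a\mapsto UaU^\ast$ maps $\dom{\Lip_1}$ into $\A_2$. By density and continuity it extends to all of $\A_1$; the symmetric argument gives surjectivity onto $\A_2$. Being spatial conjugation by a unitary, it is automatically a $*$-isomorphism. This spatial route is what replaces the Leibniz property.

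Finally, for the twist, the same argument applies to the third term of Expression \eqref{magnitude-eq}, which tends to $0$. By Expression \eqref{augmented-twist-bound-eq}, the operators $\rho_2(b_n)^\ast$ are uniformly bounded, so a subnet converges in the weak operator topology to some operator $T$. The closed-graph condition (2) of Definition \ref{metric-twisted-def}, applied with $b_n\to b=UaU^\ast$ in norm, gives $T^\ast=\rho_2(b)$. The vanishing separation then yields $U\rho_1(a)U^\ast=\rho_2(UaU^\ast)$. The main obstacle I expect is this step and the preceding one: making the lifts $d_n$ and the limits consistent simultaneously across the families indexed by $B(\tau_n)$, which change with $n$, without any Leibniz control. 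I would handle it by fixing $a$ first, choosing lifts with $\TLip(d_n)\leq\Lip_1(a)+1/n$, and rescaling slightly into $B(\tau_n)$.
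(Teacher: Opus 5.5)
Your proposal is correct and follows essentially the paper's route. You obtain $U$ and $U\Dirac_1U^\ast=\Dirac_2$ from the target-set machinery of \cite{Latremoliere18g}, you get multiplicativity spatially from the modular Leibniz property rather than from any Leibniz property of $\Lip_1,\Lip_2$, and you intertwine the twists through a weak-operator limit of $\rho_2(b_n)$ together with the closed-graph condition. Your direct use of Lemma \ref{sanity-lemma} plus compactness of the norm-bounded $\Lip_2$-balls merely repackages the paper's appeal to an abstract norm-one self-adjoint linear $\pi$ followed by $\pi(a)U\xi=U(a\xi)$.

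The one step you leave implicit, which the paper spells out, is why the vectors $y_n$ supplied by the Hausdorff distance in the twist separation term (paired with a lift of $U^\ast\psi$) actually converge to $\psi$. The paper evaluates that separation at $d=1\in B(\tau_n)$, which is exactly where $\rho_j(1)=1$ is used, and combines it with Lemma \ref{zero-twist-lemma}. This bounds the distance from $y_n$ to a target-set element over $U^\ast\psi$ by $\sqrt{2s_n+2H\tunnelextent{\tau_n}}$, where $s_n\leq GH\varepsilon_n$ is the twist separation.
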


\begin{proof}
	If $(\A_1,\Hilbert_1,\Dirac_1,\rho_1)$ and $(\A_2,\Hilbert_2,\Dirac_2,\rho_2)$ are unitarily equivalent, then by definition of an appropriate class of tunnels, there exists a trivial tunnel between them in our class, whose extent is $0$. We now turn to the converse, and assume
	\begin{equation*}
	\spectralpropinquity{\mathcal{T}}((\A_1, \Hilbert_1, \Dirac_1, \rho_1),(\A_2, \Hilbert_2, \Dirac_2 ,\rho_2)) = 0\text.
	\end{equation*}

	In this proof, for each $j \in \{1,2\}$, we write:
	\begin{equation*}
		\mcc{\A_j}{\Hilbert_j}{\Dirac_j,\rho_j} = (\Hilbert_j, \CDN_j, \A_j, \Lip_j, \C, 0)
	\end{equation*}
	so $\CDN_j$ is the graph norm of $\Dirac_j$ and $\Lip_j(a) = \max\{\opnorm{[\Dirac_j,a]_{\rho_j}}{}{\Hilbert_j},\kappa\opnorm{\rho_j(a)-a}{}{\Hilbert_j}\}$ for all $a$ where this is well-defined.

	By \cite{Latremoliere18g}, we immediately gather the following (since the spectral propinquity for twisted spectral triples dominates the metrical propinquity without a twist): there exists a unitary $U : \Hilbert_1 \to \Hilbert_2$, a continuous self-adjoint linear isomorphism $\pi : (\A_1,\Lip_1) \to (\A_2,\Lip_2)$ of norm $1$, and a sequence $(\tau_n)_{n\in\N}$ of metrical tunnels from $(\A_1,\Hilbert_1,\Dirac_1,\rho_1)$ to $(\A_2,\Hilbert_2,\Dirac_2,\rho_2)$, 
	\begin{equation*}
		\tau_n = \left( \mathds{P}_n, (\Psi_{1,n}, \pi_{1,n}, \theta_{1,n}),  (\Psi_{2,n}, \pi_{2,n}, \theta_{2,n}) \right)
	\end{equation*}
	where
	\begin{equation*}
	\mathds{P}_n \coloneqq \left( \module{P}_n, \TDN_n, \D_n, \TLip_n, \alg{E}_n, \Lip[Q]_n\right) \text,
	\end{equation*}
	with the following properties. 
	\begin{enumerate}
		\item there is a sequence $(\varepsilon_n)_{n\in\N}$ converging to $0$ such that $\tunnelmagnitude{\tau_n;\varepsilon_n}\leq\varepsilon_n$ for all $n\in\N$,
		\item for all $a \in \dom{\Lip_1}$ and for all $l \geq \Lip_1(a)$, we have 
		\begin{equation*}
			\lim_{n\to\infty}\Haus{\A_2}(\targetsettunnel{\tau_n}{a}{l}, \{ \pi(a) \} ) = 0\text,
		\end{equation*}
		where
		\begin{equation*}
			\targetsettunnel{\tau_n}{a}{l} = \left\{ \pi_{2,n}(d) : d \in \dom{\TLip_n}, \TLip_n(d) \leq l, \norm{d}{\D_n}\leq\norm{a}{\A_1}+l\tunnelextent{\tau_n}, \pi_{1,n}(d) = a \right\}\text;
		\end{equation*}
		\item  for all $\xi \in \dom{\CDN_2}$ and for all $l \geq \CDN_2(\xi)$, we have 
		\begin{equation*}
			\lim_{n\to\infty}\Haus{\Hilbert_1}(\targetsettunnel{\tau_n^{-1}}{\xi}{l}, \{ U^\ast \xi \} ) = 0\text,
		\end{equation*}
		where
		\begin{equation*}
		\targetsettunnel{\tau_n^{-1}}{\xi}{l} = \left\{ \Psi_{1,n}(\omega) : \omega \in \dom{\TDN_n}, \TDN_n(\omega) \leq l, \Psi_{2,n}(\omega) = \xi \right\}\text;
		\end{equation*}
		\item  for all $\xi \in \dom{\CDN_1}$ and for all $l \geq \CDN_1(\xi)$, we have 
		\begin{equation*}
			\lim_{n\to\infty}\Haus{\Hilbert_2}(\targetsettunnel{\tau_n}{\xi}{l}, \{ U \xi \} ) = 0\text,
		\end{equation*}
		where
		\begin{equation*}
		\targetsettunnel{\tau_n}{\xi}{l} = \left\{ \Psi_{2,n}(\omega) : \omega \in \dom{\TDN_n}, \TDN_n(\omega) \leq l, \Psi_{1,n}(\omega) = \xi \right\}\text;
		\end{equation*}
		\item $U\Dirac_1 U^\ast = \Dirac_2$ (including equality of the domains, as in particular $U\dom{\Dirac_1} = \dom{\Dirac_2}$),
		\item $\pi(a) = U a U^\ast$ for all $a \in \A_1$,
		\item $\pi(\dom{\Lip_1}) = \dom{\Lip_2}$ and $\Lip_2\circ\pi = \Lip_1$ on $\dom{\Lip_1}$.
	\end{enumerate}
	
	\emph{There is one apparent change in the above: the map $\pi$ is not stated to be a quantum isometry in (1), because it is no longer multiplicative, from the proof in \cite{Latremoliere13,Latremoliere13b}.} This is because, as we work with \emph{twisted} spectral triples, which in turn means working with\emph{relaxed} metrical C*-correspondences, we do \emph{not assume} that $\Lip_1$ and $\Lip_2$ have any Leibniz property, so \cite[Proposition 5.12]{Latremoliere13} and \cite[Claim 5.17, 5.18]{Latremoliere13} do not apply. However, \cite[Claim 5.19]{Latremoliere13} does, by replacing *-homomorphism with self-adjoint linear map of norm $1$. As it is based on this construction, \cite{Latremoliere13b,Latremoliere14,Latremoliere15} apply unchanged except that distance zero gives us a self-adjoint linear isomorphism of norm $1$. 
	
	From there, everything else in the proof of \cite{Latremoliere18d} remains unchanged, leading to the conclusion stated above. To be clear, first note that we do require that relaxed metrical C*-correspondences still include a {\qcms} as a C*-algebra of scalars --- not a relaxed one. This is to ensure that distance zero does imply *-isomorphism between these C*-algebras of scalars for our Hilbert modules in our C*-correspondences. Here, the metrical C*-correspondences for our twisted spectral triples use $\C$ as scalar C*-algebra, and thus, isomorphism here simply means the identity. In particular, $(\pi,U)$ is indeed a modular morphism, not as a corollary of the (missing) Leibniz property of $\Lip_1$ and $\Lip_2$, but from the \emph{modular Leibniz property} used in \cite[Proposition 4.7]{Latremoliere18d}, then used in \cite[Theorem 4.9]{Latremoliere18d}. Since, for all $a \in \A_1$ and $\xi \in \Hilbert_1$, we must have $\pi(a) U \xi = U(a\xi)$, we conclude that $\pi(a)=UaU^\ast$. As $U$ is a unitary, we indeed recover that $\pi$ is multiplicative. Since $\pi$ is self-adjoint and onto $\A_2$, the map $\pi$ is a *-isomorphism from $\A_1$ onto $\A_2$. Since it is an isometry for the L-seminorms, it is also a full quantum isometry!
	
	Thus Assertions (1), (2) and (3) of Definition \ref{unitary-eq-def} hold. It remains to prove that the twists are intertwined.

	Let \(a\in\dom{\Lip_1}\) with \(\max\{\norm{a}{\A_1},\Lip_1(a)\}\leq1\), and choose
	\[
		b_n\in\targetsettunnel{\tau_n}{a}{1}.
	\]
	Then \(b_n\to\pi(a)\). Set
	\[
		s_n\coloneqq
		\tunnelsep{(\rho_1(\pi_{1,n}(d))^\ast)_{d\in B(\tau_n)}}
		{(\rho_2(\pi_{2,n}(d))^\ast)_{d\in B(\tau_n)}}.
	\]
	By the definition of the magnitude, \(s_n\leq GH\varepsilon_n\), and hence \(s_n\to0\).

	Fix \(\xi,\psi\in\dom{\Dirac_2}\) with \(\CDN_2(\xi),\CDN_2(\psi)\leq1\). Since \(\Psi_{1,n}\) is a quantum isometry, choose \(\omega_n\in\dom{\TDN_n}\) such that
	\[
		\TDN_n(\omega_n)\leq1
		\qquad\text{and}\qquad
		\Psi_{1,n}(\omega_n)=U^\ast\psi.
	\]
	The definition of \(s_n\) provides \(\chi_n\in\dom{\TDN_n}\), with \(\TDN_n(\chi_n)\leq1\), such that, if \(y_n\coloneqq\Psi_{2,n}(\chi_n)\), then
	\begin{multline}\label{zero-twist-sep-eq}
		\sup_{\substack{d\in B(\tau_n)\\ \vartheta\in\dom{\TDN_n}\\\TDN_n(\vartheta)\leq1}}
		\Big|
		\inner{\rho_1(\pi_{1,n}(d))^\ast U^\ast\psi}{\Psi_{1,n}(\vartheta)}{\Hilbert_1}\\
		-\inner{\rho_2(\pi_{2,n}(d))^\ast y_n}{\Psi_{2,n}(\vartheta)}{\Hilbert_2}
		\Big|
		\leq s_n.
	\end{multline}
	Write \(t_n\coloneqq\Psi_{2,n}(\omega_n)\). Then \(t_n\to\psi\) by the target-set convergence above. Applying \eqref{zero-twist-sep-eq} with \(d=1\), first to \(\vartheta=\chi_n\) and then to \(\vartheta=\omega_n\), and applying Lemma \ref{zero-twist-lemma} to \(\omega_n\) and \(\chi_n\), gives
	\[
		\norm{y_n-t_n}{\Hilbert_2}^2
		\leq2s_n+2H\tunnelextent{\tau_n}.
	\]
	Consequently, \(y_n\to\psi\).

	Similarly, choose \(\vartheta_n\in\dom{\TDN_n}\) with
	\[
		\TDN_n(\vartheta_n)\leq1,
		\qquad
		\Psi_{1,n}(\vartheta_n)=U^\ast\xi,
	\]
	and set \(z_n\coloneqq\Psi_{2,n}(\vartheta_n)\). The target-set convergence gives \(z_n\to\xi\).

	For all sufficiently large \(n\), we may choose \(d_n\in\dom{\TLip_n}\) such that
	\[
		\pi_{1,n}(d_n)=a,\qquad
		\pi_{2,n}(d_n)=b_n,\qquad
		\TLip_n(d_n)\leq1,\qquad
		\norm{d_n}{\D_n}\leq\norm{a}{\A_1}+\tunnelextent{\tau_n}\leq2.
	\]
	Thus \(d_n/2\in B(\tau_n)\). Using \(d_n/2\) and \(\vartheta_n\) in \eqref{zero-twist-sep-eq}, then taking complex conjugates, yields
	\[
		\left|
		\inner{\rho_1(a)U^\ast\xi}{U^\ast\psi}{\Hilbert_1}
		-\inner{\rho_2(b_n)z_n}{y_n}{\Hilbert_2}
		\right|
		\leq2s_n.
	\]
	Moreover, \(\Lip_2(b_n)\leq1\), \(\norm{b_n}{\A_2}\leq2\) for all sufficiently large \(n\), and hence
	\[
		\opnorm{\rho_2(b_n)}{}{\Hilbert_2}
		\leq\norm{b_n}{\A_2}+\kappa^{-1}\Lip_2(b_n)
		\leq2+\kappa^{-1}\leq G.
	\]
	Since \(y_n\to\psi\), \(z_n\to\xi\), and \(s_n\to0\), we conclude that
	\[
		\inner{\rho_2(b_n)\xi}{\psi}{\Hilbert_2}
		\longrightarrow
		\inner{U\rho_1(a)U^\ast\xi}{\psi}{\Hilbert_2}.
	\]
	By density and the uniform bound above, \(\rho_2(b_n)\) converges to \(U\rho_1(a)U^\ast\) in the weak operator topology. Since \(b_n\to\pi(a)\) in norm and \(\rho_2\) has a closed graph from the norm topology to the weak operator topology,
	\[
		\rho_2(\pi(a))=U\rho_1(a)U^\ast.
	\]
	By homogeneity, this identity holds for every \(a\in\dom{\Lip_1}\), as desired.
\end{proof}

In summary of this section, we record that:
\begin{corollary}
	If $0<\kappa\leq1$, if $\mathcal{C}$ is a nonempty class of $\kappa$-metric twisted spectral triples, and if $\mathcal{T}$ is a $\mathcal{C}$-appropriate class of tunnels, then $\spectralpropinquity{\mathcal{T}}$ is a metric on $\mathcal{C}$ up to unitary equivalence. This applies, in particular, to $\spectralpropinquity{\kappa}$. 
\end{corollary}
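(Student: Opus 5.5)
The statement is that $\spectralpropinquity{\mathcal{T}}$ is a metric on $\mathcal{C}$ up to unitary equivalence. I will assemble it from the results already established in this section and check the four metric axioms in turn.

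\emph{Finiteness and nonnegativity.} Let $(\A_1,\Hilbert_1,\Dirac_1,\rho_1),(\A_2,\Hilbert_2,\Dirac_2,\rho_2)\in\mathcal{C}$. Condition (1) of an appropriate class gives a tunnel $\tau\in\mathcal{T}$ between them. The extent is bounded by the diameters of the state spaces, which are finite because the Monge-Kantorovich metrics metrize compact weak* topologies. Each separation term is a Hausdorff distance between bounded sets of functionals: the operators $e^{it\Dirac_j}$ are unitary, and $\rho_j\circ\pi_j(d)$ is bounded by Expression \eqref{augmented-twist-bound-eq} for $d\in B(\tau)$. So $\tunnelmagnitude{\tau;\varepsilon}$ stays bounded independently of $\varepsilon$, and it is at most $\varepsilon$ once $\varepsilon$ is large. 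The infimum defining the propinquity is therefore a finite nonnegative real number.

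\emph{Symmetry and coincidence.} Symmetry, and vanishing on unitarily equivalent pairs, are the preceding proposition, via trivial tunnels and $\tunnelmagnitude{\tau^{-1};\varepsilon}=\tunnelmagnitude{\tau;\varepsilon}$. The converse, that distance zero implies unitary equivalence, is exactly the theorem just proven.

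\emph{Triangle inequality.} This is the only step that needs an argument, and it uses condition (4). Suppose $\spectralpropinquity{\mathcal{T}}(T_1,T_2)<r_1$ and $\spectralpropinquity{\mathcal{T}}(T_2,T_3)<r_2$. By definition of the infimum, there are $\varepsilon_1<r_1$, $\varepsilon_2<r_2$ and tunnels $\tau,\tau'\in\mathcal{T}$ with $\tunnelmagnitude{\tau;\varepsilon_1}\leq\varepsilon_1$ and $\tunnelmagnitude{\tau';\varepsilon_2}\leq\varepsilon_2$. For any $\varepsilon>0$, condition (4) then produces $\tau\circ_\varepsilon\tau'\in\mathcal{T}$ with
\[
\tunnelmagnitude{\tau\circ_\varepsilon\tau';\varepsilon_1+\varepsilon_2+\varepsilon}\leq\varepsilon_1+\varepsilon_2+\varepsilon .
\]
Hence $\spectralpropinquity{\mathcal{T}}(T_1,T_3)\leq r_1+r_2+\varepsilon$. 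Letting $\varepsilon\to0$ and then letting $r_1,r_2$ decrease to the respective distances gives the triangle inequality.

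\emph{Application to $\spectralpropinquity{\kappa}$.} It only remains to recall that $\mathcal{T}_\kappa$, the class of all $(1,0,\kappa^{-1}+2,2)$-tunnels, is appropriate for the class of all $\kappa$-metric twisted spectral triples. This is the earlier proposition with $G=\kappa^{-1}+2$.

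The main subtlety is the coincidence axiom, but it is fully carried by the previous theorem, so no real obstacle remains.
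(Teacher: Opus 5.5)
Your proposal is correct and follows the paper. The paper records this corollary as a summary of the section's results with no separate proof:
\begin{enumerate}
\item symmetry and vanishing on unitarily equivalent triples come from the proposition on trivial and inverse tunnels;
\item the converse, that distance zero implies unitary equivalence, comes from the coincidence theorem;
\item the triangle inequality comes from condition (4) of an appropriate class;
\item the case of $\spectralpropinquity{\kappa}$ comes from the proposition that all $(1,0,\kappa^{-1}+2,2)$-tunnels form an appropriate class.
\end{enumerate}
Your explicit check that each tunnel's magnitude is bounded independently of $\varepsilon$, so the defining infimum is finite, is a detail the paper leaves implicit, and your argument for it is sound.
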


Therefore, the spectral propinquity can be easily extended to account for twisted spectral triples, and remains a distance up to unitary equivalence. Note that indeed, this revised definition agrees with the usual spectral propinquity for metric spectral triples by Lemma (\ref{sanity-lemma}).

We also note that the results in \cite{Latremoliere22} all apply immediately since none of these results depend on the specific choice of the formula relating the Dirac operator and the L-seminorm (a remark we made in the introduction of \cite{Latremoliere22}).

\section{Convergence for the spectral propinquity}

We now can conclude our proof that our twisted, fuzzy tori provide finite dimensional approximations of the flat spectral triple of quantum tori, including the classical torus, for the spectral propinquity, thus concluding our paper. We use the notations of Sections 2 and 3.

We begin with a few technical results in the same vein as previously. 
\begin{notation}
	For any $k \in \Pbar^d$, and for any $f \in L^1(\T^d,\mu_k)$, the following defines a continuous linear operator on $\ell^2(\Z^d_k)$:
	\begin{equation*}
		u_k^f  \colon \xi \in \ell^2(\Z^d_k) \mapsto \int_{\U^d_k} f(z) u_k^z \xi \, d\mu_k(z) \text,
	\end{equation*}
	with $\opnorm{u_k^f}{}{\ell^2(\Z^d_k)} \leq \norm{f}{L^1(\U^d_k,\mu)}$.
\end{notation}

In the next two lemmas, for any $k \in \Pbar^d$, we set for all $\xi \in \dom{\Dirac_k}$:
\begin{equation*}
	\CDN_k(\xi) = \norm{\xi}{\Hilbert_k} + \norm{\Dirac_k \xi}{\Hilbert_k} \text.
\end{equation*}

\begin{lemma}\label{Hilbert-compression-lemma}
	If $f \colon \T^d \to [0,\infty)$ with $\int_{\T^d} f \, d\mu= 1$, and if the Fourier transform of $f$ is supported on a finite, nonempty subset $S$ of $\Z^d$, then for all $k \in \Pbar^d$, and for all $\xi \in \dom{\Dirac_k}$, we have $(u_k^f\otimes\mathrm{id}_{\Spinor_d}) \xi \in \ell^2(\Z^d_k | S)\otimes\Spinor_d$, while:
	\begin{equation*}
		\CDN_{k}((u_k^f\otimes \mathrm{id}_{\Spinor_d}) \xi) \leq \CDN_k(\xi)\text,
	\end{equation*}
	and
	\begin{equation*}
		\norm{\xi - (u_k^f\otimes \mathrm{id}_{\Spinor_d}) \xi}{\Hilbert_k} \leq \CDN_k(\xi) \int_{\U_k^d} f(z)\operatorname{slen}(z) \, d\mu(z) \text.
	\end{equation*}
\end{lemma}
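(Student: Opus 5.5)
The approach is to diagonalize everything in the Fourier basis $\{e_m\otimes\zeta\}$. Both $u_k^z$ and $\Dirac_k$ leave each finite-dimensional subspace $\C e_m\otimes\Spinor_d$ invariant, so every estimate reduces to a scalar inequality on each Fourier mode.

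\textbf{Step 1: $u_k^f$ is a Fourier multiplier.} Since $u_k^z e_m=z^m e_m$, we get $u_k^f e_m=c_k(m)e_m$ with
\begin{equation*}
c_k(m)=\int_{\U_k^d}f(z)z^m\,d\mu_k(z).
\end{equation*}
Write $f$ as a trigonometric polynomial with frequencies in $S$. Orthogonality of the characters of $\U^d_k$ shows that $c_k(m)=0$ unless $-m\in q_k(S)$ (up to the sign convention for the Fourier transform). This gives the support claim. Because $f\geq 0$, we also have $|c_k(m)|\leq\int_{\U_k^d}f\,d\mu_k$.

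I expect one genuine subtlety here. For small $k$, the integral $\int_{\U^d_k}f\,d\mu_k$ equals $\sum_{n\in S\cap k\Z^d}\widehat f(n)$, which is $1$ only once $k\Z^d\cap S=\{0\}$. I would therefore either restrict to such $k$, as in Lemma \ref{compression-lemma}, or read the normalization as $\int_{\U_k^d}f\,d\mu_k=1$. With this normalization, $u_k^f$ is a contraction.

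\textbf{Step 2: the $\CDN_k$ bound.} $u_k^f\otimes\mathrm{id}$ commutes with $\Dirac_k$, since both preserve each $\C e_m\otimes\Spinor_d$ and $u_k^f$ acts there as a scalar. Hence it maps $\dom{\Dirac_k}$ into itself, and
\begin{equation*}
\norm{\Dirac_k (u_k^f\otimes\mathrm{id})\xi}{}\leq\norm{\Dirac_k\xi}{}.
\end{equation*}
Together with contractivity, this gives $\CDN_k((u_k^f\otimes\mathrm{id})\xi)\leq\CDN_k(\xi)$.

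\textbf{Step 3: the approximation bound.} I would write
\begin{equation*}
\xi-(u_k^f\otimes\mathrm{id})\xi=\int_{\U_k^d} f(z)\,\bigl(\xi-(u_k^z\otimes\mathrm{id})\xi\bigr)\,d\mu_k(z),
\end{equation*}
so it suffices to prove $\norm{\xi-(u^z_k\otimes\mathrm{id})\xi}{}\leq \operatorname{slen}(z)\,\CDN_k(\xi)$ for each $z$.

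Mode by mode, with $z=(y_1,\ldots,y_d)$, we have
\begin{equation*}
|1-z^m|\leq\sum_j|1-y_j^{m_j}|.
\end{equation*}
We then bound each summand by $\mathsf{len}(y_j)s_j(m)$, where $s_j(m)=|m_j|$ if $k_j=\infty$ and $s_j(m)=\frac{k_j}{\pi}|\sin(\pi m_j/k_j)|$ otherwise.
\begin{itemize}
\item For $k_j=\infty$ this is $|1-e^{itm_j}|\leq|t||m_j|$.
\item For $k_j<\infty$, write $y_j=\lambda_{k,j}^q$ with $|q|$ minimal and set $\theta=2\pi m_j/k_j$. Then $|1-e^{iq\theta}|\leq|q|\,|1-e^{i\theta}|=|q|\cdot 2|\sin(\pi m_j/k_j)|$, and $\mathsf{len}(y_j)=2\pi|q|/k_j$ gives exactly $\mathsf{len}(y_j)s_j(m)$.
\end{itemize}
Since $s_j(m)\leq r_k(m)$, with $r_k$ the symbol from the proof of Lemma \ref{car-riesz-lemma}, we obtain $|1-z^m|\leq\operatorname{slen}(z)\,r_k(m)$.

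Summing over modes, and using that $\norm{\Dirac_k^0\eta}{}=r_k(m)\norm{\eta}{}$ on each mode, gives
\begin{equation*}
\norm{\xi-(u_k^z\otimes\mathrm{id})\xi}{}\leq\operatorname{slen}(z)\,\norm{\Dirac_k^0\xi}{}.
\end{equation*}
Finally,
\begin{equation*}
\norm{\Dirac_k^0\xi}{}\leq\norm{\Dirac_k\xi}{}+\varepsilon_k\norm{\xi}{}\leq\CDN_k(\xi),
\end{equation*}
since $\varepsilon_k<1$. Integrating against $f\,d\mu_k$ yields the stated inequality.

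The main obstacle is the per-mode estimate $|1-y_j^{m_j}|\leq\mathsf{len}(y_j)\,s_j(m)$ in the discrete directions, together with the normalization issue noted in Step 1; everything else is routine diagonalization.
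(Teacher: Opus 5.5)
Your proof is correct and is essentially the paper's argument. The paper also gets the $\CDN_k$ bound from the commutation of $u_k^z\otimes\mathrm{id}_{\Spinor_d}$ with $\Dirac_k$ followed by averaging, and gets the approximation bound from a coordinatewise telescoping estimate controlled by $\norm{\Dirac_k^0\xi}{}\leq\CDN_k(\xi)$; it simply works at the operator level rather than mode by mode in the Fourier basis.

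Your normalization caveat is also well founded. The paper's proof silently uses $\int_{\U_k^d}f\,d\mu_k=1$, both for the contraction and to write $\xi-(u_k^f\otimes\mathrm{id}_{\Spinor_d})\xi$ as an average. This holds once $k\Z^d\cap S=\{0\}$, as in the later application near $k=\infty$. It can fail otherwise: for instance, $f(e^{i\theta_1},\ldots,e^{i\theta_d})=1+\cos\theta_1$ with $k=(1,\ldots,1)$ gives $u_k^f=2$, and then both inequalities fail.
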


\begin{proof}
	For all $z \in \U^d_k$, the unitary $(u_k^z\otimes \mathrm{id}_{\Spinor_d})$ commutes with $\Dirac_k$, so it acts as an isometry of $\CDN_k$. It follows that $(u_k^f\otimes \mathrm{id}_{\Spinor_d})$ is a weak contraction of $\CDN_k$ (note that $\CDN_k$ is lower semicontinuous). Moreover, for all $\xi \in \dom{\CDN_{k}}$, a coordinate telescoping argument gives
	\begin{align*}
		\norm{\xi - (u_k^f\otimes\mathrm{id}_{\Spinor_d})\xi}{\Hilbert_k}
		&\leq \int_{\U_k^d} f(z) \norm{\xi - (u_k^z\otimes\mathrm{id}_{\Spinor_d})\xi}{\Hilbert_k} \, d\mu_k(z) \\
		&\leq \CDN_k(\xi) \int_{\U_k^d} f(z) \operatorname{slen}(z) \, d\mu_k(z) \text.
	\end{align*}
	Indeed, in a continuous coordinate $\|m_j\xi\|\leq\|\Dirac_k^0\xi\|$, while in a finite coordinate $\|(u_k^j-1)\xi\|=(2\pi/k_j)\|\nabla_k^j\xi\|\leq(2\pi/k_j)\|\Dirac_k^0\xi\|$ by Expression \eqref{dirac-square-eq}. Also $\|\Dirac_k^0\xi\|\leq\CDN_k(\xi)$ by our choice of $\varepsilon_k$.
\end{proof}

We now conclude with our main result.
Set
\begin{equation*}
	\kappa_d\coloneqq \frac{1}{16\pi d} \text,
	\qquad
	G_d\coloneqq 2+16\pi d.
\end{equation*}
\begin{theorem}
For all normalized $2$-cocycles $\sigma$ of $\Z^d$, and if $\mathcal{T}$ is the class of all $(4d,0,1,0,G_d,2)$-tunnels between $\kappa_d$-metric twisted spectral triples, then
	\begin{equation*}
		\lim_{\substack{(k,\varsigma) \to (\infty,\sigma) \\ (k,\varsigma) \in \Xi}} \spectralpropinquity{\mathcal{T}}((\A_{k,\varsigma},\Hilbert_{k},\Dirac_{k},\rho_{k,\varsigma}), (\A_{\infty,\sigma},\Hilbert_\infty,\Dirac_\infty,\mathrm{id})) = 0 \text.
	\end{equation*}
\end{theorem}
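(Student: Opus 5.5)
Fix $\varepsilon>0$ and start from the metric tunnel $\tau_{k,\varsigma,\varepsilon}$ of Theorem \ref{prop-cv-thm}: the data $S_\varepsilon$, $f_\varepsilon$, $\Omega_\varepsilon$, the representations $\pi_{k,\varsigma}$ on $\ell^2(\Z^d)$, the isometries $\vartheta_k$ and the operator $R_\varepsilon$. I will upgrade it to a tunnel between the relaxed metrical C*-correspondences $\mcc{\A_{k,\varsigma}}{\Hilbert_k}{\Dirac_k,\rho_{k,\varsigma}}$ and $\mcc{\A_{\infty,\sigma}}{\Hilbert_\infty}{\Dirac_\infty,\mathrm{id}}$, following the construction of \cite{Latremoliere18g,Latremoliere21a}.

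The module is $\Hilbert_k\oplus\Hilbert_\infty$, with $\C\oplus\C$ as scalar algebra and the Lip-norm on $\C\oplus\C$ given by $\frac{1}{\varepsilon}|\lambda-\mu|$. On the module I take
\begin{equation*}
\TDN(\xi,\eta)=\max\Big\{\CDN_k(\xi),\CDN_\infty(\eta),\tfrac{1}{\varepsilon}\norm{(\vartheta_k\otimes1)(u^{f_\varepsilon}_k\otimes1)\xi-(u^{f_\varepsilon}_\infty\otimes1)\eta}{}\Big\}.
\end{equation*}
On $\D=\A_{k,\varsigma}\oplus\A_{\infty,\sigma}$ I take the maximum of $\Lip[T]_{k,\varsigma,\varepsilon}$, now built with the new $L$-seminorms $\max\{D_{k,\varsigma},\kappa_d\opnorm{\rho_{k,\varsigma}(\cdot)-\cdot}{}{}\}$, and the extra term
\begin{equation*}
\tfrac{1}{\varepsilon}\opnorm{(\pi_{k,\varsigma}\otimes1)(\rho_{k,\varsigma}(a))R'-R'b}{}{},
\end{equation*}
where $R'$ is the projection onto $\ell^2(\Z^d|S_\varepsilon)\otimes\Spinor_d$.

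By Corollary \ref{car-twist-estimate-cor}, the extra twist component $\kappa_d\opnorm{\rho-\mathrm{id}}{}{}$ never exceeds $\Lip_{k,\sigma}$. So the new $L$-seminorm on $\A_{k,\varsigma}$ equals $\Lip_{k,\varsigma}$, which is $(4d,0)$-Leibniz. The metric part of the extent bound is therefore inherited from Theorem \ref{prop-cv-thm}.

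The key checks are these.
\begin{enumerate}
\item The maps $(\xi,\eta)\mapsto\xi$ and $(\xi,\eta)\mapsto\eta$ are quantum isometries. This uses Lemma \ref{Hilbert-compression-lemma}: $u^{f_\varepsilon}$ maps into the finite block $S_\varepsilon$ and is a $\CDN$-contraction, so $\vartheta_k$ can be used to lift any $\xi$ to a pair with $\TDN=\CDN_k(\xi)$.
\item The modular Leibniz inequality holds with $G_d=2+\kappa_d^{-1}$, as in the earlier theorem.
\item The inner Leibniz inequality holds with $H=2$, via Lemma \ref{zero-twist-lemma}-style estimates on $\C\oplus\C$.
\end{enumerate}

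For the propagation separation term, I will use that on $\ell^2(\Z^d|S_\varepsilon)\otimes\Spinor_d$ the operators $\Dirac_k$ and $\Dirac_\infty$ differ in norm by at most
\begin{equation*}
\max_{m\in S_\varepsilon}\Big|\tfrac{k_j}{2\pi}\big(e^{2i\pi m_j/k_j}-1\big)-im_j\Big|+\varepsilon_k,
\end{equation*}
which tends to $0$. Consequently $e^{it\Dirac_k}$ and $e^{it\Dirac_\infty}$ agree up to $t\cdot o(1)$ on the block, uniformly for $t\le\varepsilon^{-1}$. Combined with the approximation error $\int f_\varepsilon\,\mathsf{slen}<\varepsilon/4$ from Lemma \ref{Hilbert-compression-lemma}, this bounds the term by a multiple of $\varepsilon$ for $(k,\varsigma)$ close to $(\infty,\sigma)$.

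The main obstacle is the new twist separation term. I must show that, for $d=(a,b)\in B(\tau)$ and $\TDN(\omega)\le1$, the quantities $\inner{\rho_{k,\varsigma}(a)^*\Psi_1(\chi)}{\Psi_1(\omega)}{}$ and $\inner{b^*\Psi_2(\chi)}{\Psi_2(\omega)}{}$ are uniformly close. I will first compress $a$ to $\alpha^{f_\varepsilon}(a)$ using Lemma \ref{compression-lemma}, which costs $\varepsilon$ by Corollary \ref{convolution-L-cor}. I will then write
\begin{equation*}
\rho_{k,\varsigma}(a)=a+E_{k,\varsigma}(a)\Dirac_k^{-1}.
\end{equation*}
By Lemma \ref{car-riesz-lemma}, restricted to vectors in the block $S_\varepsilon$ (where $r_k(m)$ stays bounded), one gets
\begin{equation*}
\opnorm{E_{k,\varsigma}(a)\Dirac_k^{-1}}{}{}\lesssim\Big(\max_{m\in S_\varepsilon,\,j}\big|e^{2i\pi m_j/k_j}-1\big|\Big)\Lip_{k,\varsigma}(a)\to0.
\end{equation*}
This reduces the term to the untwisted separation, which Lemma \ref{sanity-lemma} bounds by $G_dH\tunnelextent{\tau}$.

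The delicate point is that the compressed vectors must also be handled: $\rho(a)$ need not preserve the block. If necessary, I will absorb this through the extra term in the $L$-seminorm on $\D$, which directly controls $\rho(a)R'-R'b$. This is exactly why that term was added. Its compatibility with quantum isometry, i.e. that lifts with small norm exist, again follows from the smallness of $E\Dirac_k^{-1}$ on the block.

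Taking $\varepsilon\to0$ then gives the stated limit.
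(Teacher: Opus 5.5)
\paragraph{Where the proposal breaks down.} Your overall architecture matches the paper's. It uses the module $\Hilbert_k\oplus\Hilbert_\infty$ over $\C^2$ and reuses the metric tunnel of Theorem \ref{prop-cv-thm}. It gets quantum isometries from Lemma \ref{Hilbert-compression-lemma}, and handles the propagation term by norm convergence of $\Dirac_k$ to $\Dirac_\infty$ on the finite block followed by a Duhamel estimate. It then reduces the twist term to Lemma \ref{sanity-lemma}. Your compressed link term is a legitimate variant of the paper's $\frac{1}{3\varepsilon}\norm{(\vartheta_k\otimes 1)\xi-\eta}{}$, up to adjusting constants.

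The gap is in the step you flag as the main obstacle, which the proposal does not close. You estimate $E_{k,\varsigma}(a)\Dirac_k^{-1}$ only on the block $S_\varepsilon$. You then notice that the vectors in the separation are not in the block, and propose to compensate with the extra term $\frac1\varepsilon\opnorm{(\pi_{k,\varsigma}\otimes1)(\rho_{k,\varsigma}(a))R'-R'b}{}{}$. This has three problems.
\begin{itemize}
\item As written, the term is not defined. $\rho_{k,\varsigma}(a)=a+E_{k,\varsigma}(a)\Dirac_k^{-1}$ involves $u_k^j$ and $\Dirac_k^{-1}$. It is therefore an operator on $\Hilbert_k$, not an element of $\A_{k,\varsigma}\otimes\operatorname{CAR}_d$ to which $\pi_{k,\varsigma}\otimes1$ could be applied.
\item Even if you reinterpret the term by conjugating with $\vartheta_k\otimes 1$, $\rho_{k,\varsigma}$ is not multiplicative. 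Nothing then ensures that the modified seminorm on $\A_{k,\varsigma}\oplus\A_{\infty,\sigma}$ is still $(4d,0)$-Leibniz, and that is required for your tunnel to lie in $\mathcal{T}$.
\item The claim that the coordinate maps stay quantum isometries needs lifts making this term small. You justify that only by the same block estimate whose insufficiency motivated the term.
\end{itemize}

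Replacing $a$ by $\alpha^{f_\varepsilon}_{k,\varsigma}(a)$ inside the twist also does not ``cost $\varepsilon$ by Corollary \ref{convolution-L-cor}''. That corollary bounds $D_{k,\varsigma}(\alpha^{f}_{k,\varsigma}(a))$. What you would need is that $\opnorm{\rho_{k,\varsigma}(a-\alpha^f_{k,\varsigma}(a))}{}{\Hilbert_k}$ is small. The general bound $\norm{c}{}+\kappa_d^{-1}\Lip_{k,\varsigma}(c)$ does not give this, since $\Lip_{k,\varsigma}(a-\alpha^f_{k,\varsigma}(a))$ is only bounded by $2\Lip_{k,\varsigma}(a)$.

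\paragraph{The missing observation.} Lemma \ref{car-riesz-lemma} is uniform over all Fourier modes, not just over a finite block. On $\C e_m\otimes\Spinor_d$, the growth $|e^{2i\pi m_j/k_j}-1|\leq\frac{2\pi}{k_j}r_k(m)$ is exactly cancelled by $\norm{\Dirac_k^{-1}(e_m\otimes\zeta)}{}\leq\frac{2}{r_k(m)}\norm{\zeta}{}$. Hence Corollary \ref{car-twist-estimate-cor} already gives, on the whole of $\Hilbert_k$,
\[
\opnorm{\rho_{k,\varsigma}(a)-a}{}{\Hilbert_k}\leq16\pi\Big(\sum_{k_j<\infty}\frac1{k_j}\Big)\Lip_{k,\varsigma}(a),
\]
whereas you used only its weaker $\kappa_d$ consequence.

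Shrink the neighborhood so that $16\pi\sum_{k_j<\infty}k_j^{-1}\leq\varepsilon$. Then every $(a,b)\in B(\tau)$ satisfies $\opnorm{\rho_{k,\varsigma}(a)-a}{}{\Hilbert_k}\leq\varepsilon$. For vectors with $\TDN$ at most $1$, each twisted pairing is then within $\varepsilon$ of the untwisted one. So the twist separation is at most $\varepsilon+G_dH\,\tunnelextent{\tau}$ by Lemma \ref{sanity-lemma}.

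This is exactly the paper's argument. It needs no compression of $a$, no block restriction, and no change to the metric tunnel's seminorm $\TLip$, which therefore remains $(4d,0)$-Leibniz.
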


\begin{proof}
We continue the argument which we started for the proof of Theorem \ref{prop-cv-thm}, and in particular, we import all its notation henceforth. We are thus given a normalized $2$-cocycle $\sigma$ of $\Z^d$. Let $\varepsilon > 0$. Without loss of generality, we assume henceforth that $\varepsilon\in(0,1)$.

By our previous argument, shrinking the neighborhood and choosing the compression kernel for the tolerance $\varepsilon^2$, there exists a neighborhood $\Omega_\varepsilon$ of $(\infty,\sigma)$ in $\Xi$, a finite subset $S_\varepsilon\subseteq\Z^d$, and a function $f : \T^d\to[0,\infty)$ with the properties listed at the end of Section 3. In particular, using Lemma \ref{Hilbert-compression-lemma}, we observe that for all $(k,\varsigma)\in \Omega_\varepsilon$, and for all $\eta \in \dom{\Dirac_k}$, the following inequality holds:
\begin{equation*}
	\norm{v_k^f \eta - \eta}{\Hilbert_k} \leq \varepsilon^2 \CDN_k(\eta)\text,
\end{equation*}
where $v_k^z = u_k^z\otimes\mathrm{id}_{\Spinor_d}$ for all $z\in \U^d_k$, and $v_k^f = u_k^f\otimes\mathrm{id}_{\Spinor_d}$.

Note that by construction, $v_k^f \eta \in \Hilbert_k(S_\varepsilon)$ where $\Hilbert_k(S_\varepsilon) \coloneqq \ell^2(\Z^d_k|S_\varepsilon)\otimes\Spinor_d$.  Therefore, $T_\varepsilon v_k^f \eta = v_k^f \eta$, where $T_\varepsilon = R_{k,\sigma,\varepsilon} \otimes \mathrm{id}_{\Spinor_d}$.

We note that the forward symbols $\nabla_k^j$ and their adjoints converge on every fixed Fourier set to multiplication by $im_j$ and $-im_j$, respectively. Therefore, writing $\theta_k \coloneqq \vartheta_k\otimes\mathrm{id}_{\Spinor_d}$, the operators $\theta_k \Dirac_k\theta_k^\ast$ converge in the strong operator topology to $\Dirac_\infty$ on the Hilbert space $\Hilbert_\infty(S_\varepsilon)$ by construction; since $S_\varepsilon$ is finite, the space $\Hilbert_k(S_\varepsilon)$ is finite dimensional, so $\theta_k \Dirac_k \theta_k^\ast$ converges in norm to $\Dirac_\infty$ as $k$ goes to $\infty$. We let $\Omega$ be a neighborhood of $(\infty,\sigma)$ contained in $\Omega_\varepsilon$ such that, for all $(k,\varsigma) \in \Omega$:
\begin{equation*}
	\opnorm{\theta_k\Dirac_k\theta_k^\ast - \Dirac_\infty}{}{\Hilbert_\infty(S_\varepsilon)} < \varepsilon^2 \text.
\end{equation*}
Henceforth, up to reducing $\Omega$, we also assume that if $(k,\varsigma) \in \Omega$ with $k = (k_1,\ldots,k_d)$, then
\begin{equation*}
	16\pi\sum_{k_j<\infty}\frac1{k_j}\leq\varepsilon \text.
\end{equation*}

Let $(k,\varsigma) \in \Omega$.  For all $\xi \in \Hilbert_k$ and $\eta \in \Hilbert_\infty$, we define:
\begin{equation*}
	\TDN_k(\xi,\eta) \coloneqq\max\left\{\CDN_k(\xi), \CDN_\infty(\eta), \frac{1}{3\varepsilon}\norm{\theta_k(\xi) - \eta}{\Hilbert_\infty} \right\} \text,
\end{equation*}
which is finite exactly over the dense subspace $\dom{\TDN_k} \coloneqq \dom{\Dirac_k}\oplus\dom{\Dirac_\infty}$ of $\Hilbert_k\oplus\Hilbert_\infty$. It is easy to check that $\{ (\xi,\eta) \in \dom{\TDN_k} : \TDN_k(\xi,\eta)\leq 1\}$ is compact (as a closed subset of the product of the closed unit ball of the graph norms of $\Dirac_k$ and $\Dirac_\infty$, both of which are compact).

Let $j_k : (\xi,\eta) \in \Hilbert_k\oplus\Hilbert_\infty \mapsto \xi \in \Hilbert_k$ and $j_\infty :(\xi,\eta) \in \Hilbert_k\oplus\Hilbert_\infty \mapsto \eta \in \Hilbert_\infty$. It is an immediate exercise to check that $(j_k,j_{k,\varsigma})$ and $(j_\infty,j_{\infty,\sigma})$ are surjective module maps. We wish to prove they are quantum isometries. 

Let $\xi \in \dom{\Dirac_k}$ with $\CDN_k(\xi)\leq 1$. We then have
\begin{align*}
	\left|\CDN_{k,\varsigma}(v_k^f \xi) - \CDN_{\infty,\sigma}(\theta_k(v_k^f\xi))\right|
	&\leq \left|\norm{v_k^f\xi}{\Hilbert_k(S_\varepsilon)} - \norm{\theta_k(v_k^f\xi)}{\Hilbert_\infty(S_\varepsilon)}\right| \\
	&\quad + \left|\norm{\Dirac_k v_k^f\xi}{\Hilbert_k(S_\varepsilon)} -  \norm{\Dirac_\infty\theta_k(v_k^f\xi)}{\Hilbert_\infty(S_\varepsilon)}\right| \\
	&= 0 + \left|\norm{\theta_k \Dirac_k \theta_k^\ast \theta_k v_k^f\xi}{\Hilbert_\infty(S_\varepsilon)} -  \norm{\Dirac_\infty\theta_k(v_k^f\xi)}{\Hilbert_\infty(S_\varepsilon)}\right| \text{ as $\theta_k$ isometric,}\\
	&\leq 0 + \opnorm{\theta_k \Dirac_k \theta_k^\ast-\Dirac_\infty}{}{\Hilbert_\infty(S_\varepsilon)} \norm{v_k^f \xi}{\Hilbert_k} \\
	&\leq \varepsilon^2 \leq \varepsilon \text.
\end{align*}
So $\CDN_\infty(\theta_k(v_k^f\xi)) \leq 1+\varepsilon^2$. Therefore, letting $\eta \coloneqq \frac{1}{1+\varepsilon^2}\theta_k v_k^f \xi$, we have $\CDN_\infty(\eta)\leq1$ and
\begin{align*}
	\norm{\theta_k\xi-\eta}{\Hilbert_\infty}
	&\leq\norm{\xi-v_k^f\xi}{\Hilbert_k}
	+\frac{\varepsilon^2}{1+\varepsilon^2}\norm{v_k^f\xi}{\Hilbert_k}\\
	&\leq2\varepsilon^2\leq3\varepsilon.
\end{align*}
Therefore, we conclude that:
\begin{equation*}
	\TDN_{k}(\xi,\eta) \leq 1 \text.
\end{equation*}
This proves that $j_k$ is indeed a quantum isometry. The same reasoning applies to show that $j_\infty$ is also a quantum isometry.

Now, we check the necessary Leibniz inequalities for our prospective $D$-norm $\TDN_k$. Let $a \in \A_{k,\varsigma}$, $b \in \A_{\infty,\sigma}$, $\xi \in \dom{\Dirac_k}$ and $\eta \in \dom{\Dirac_\infty}$. We compute:
\begin{align*}
	\norm{\theta_k(a\xi) - b\eta}{\Hilbert_\infty}
	&=\norm{\pi_{k,\varsigma}(a) \theta_k(\xi) - b\eta}{\Hilbert_\infty} \\
	&\leq \norm{\pi_{k,\varsigma}(a) (\theta_k(\xi) - \eta)}{\Hilbert_\infty} + \norm{(\pi_{k,\varsigma}(a) - b)\eta}{\Hilbert_\infty} \\
	&\leq \norm{a}{\A_{k,\varsigma}}\norm{\theta_k(\xi)-\eta}{\Hilbert_\infty} + \opnorm{(\pi_{k,\varsigma}(a) - b)}{}{\Hilbert_\infty} \norm{\eta-v_\infty^f\eta}{\Hilbert_\infty} \\
	&\quad + \norm{(\pi_{k,\varsigma}(a) - b)v_\infty^f \eta}{\Hilbert_\infty} \\
	&\leq 3\varepsilon\norm{a}{\A_{k,\varsigma}} \TDN_k (\xi,\eta) + \varepsilon^2(\norm{a}{\A_{k,\varsigma}} + \norm{b}{\A_{\infty,\sigma}})\CDN_\infty(\eta) \\
	&\quad + \norm{(\pi_{k,\varsigma}(a) - b) R_k v_\infty^f \eta}{\Hilbert_\infty} \\
	&\leq \varepsilon(4 \norm{a}{\A_{k,\varsigma}} + \norm{b}{\A_{\infty,\sigma}}) \TDN_k (\xi,\eta) + \norm{[b,R_k] v_\infty^f \eta}{\Hilbert_\infty} \\
	&\quad + \norm{(\pi_{k,\varsigma}(a) R_k  - R_k b ) v_\infty^f\eta}{\Hilbert_\infty} \\
	&\leq \varepsilon(4 \norm{a}{\A_{k,\varsigma}} + \norm{b}{\A_{\infty,\sigma}}) \TDN_k(\xi,\eta) + \varepsilon \Lip_{\infty,\sigma}(b) \norm{\eta}{\Hilbert_\infty} \\
	&\quad + \opnorm{\pi_{k,\varsigma}(a) R_k  - R_k b}{}{\Hilbert_\infty} \norm{\eta}{\Hilbert_\infty} \\
	&\leq \varepsilon(4 \norm{a}{\A_{k,\varsigma}} + \norm{b}{\A_{\infty,\sigma}}) \TDN_k(\xi,\eta) + \varepsilon \Lip_{\infty,\sigma}(b) \norm{\eta}{\Hilbert_\infty} + \varepsilon\TLip_{k,\varsigma} (a,b) \norm{\eta}{\Hilbert_\infty} \\
	&\leq \varepsilon (4 \norm{a}{\A_{k,\varsigma}} + \norm{b}{\A_{\infty,\sigma}} + \Lip_{\infty,\sigma}(b) + \TLip_{k,\varsigma}(a,b)) \TDN_k(\xi,\eta) \\
	&\leq \varepsilon (5 \norm{(a,b)}{\D_{k,\varsigma}} + 2 \TLip_{k,\varsigma}(a,b)) \TDN_k(\xi,\eta) \text. 
\end{align*}
The two graph-norm components satisfy the same bound with constant $G_d$ by the general modular Leibniz estimate above, while the link component is bounded by the preceding computation. Therefore,
\begin{align*}
	\TDN_k(a\xi,b\eta) 
	&\leq G_d\max\{ \norm{(a,b)}{\D_{k,\varsigma}} , \TLip_{k,\varsigma}(a,b)\} \TDN_k(\xi,\eta) \text,
\end{align*}

We regard $\Hilbert_k\oplus\Hilbert_\infty$ as a $\C^2$ Hilbert module, with inner product defined for all $(\xi,\eta), (\xi',\eta') \in \Hilbert_k\oplus\Hilbert_\infty$ by
\begin{equation*}
	\inner{(\xi,\eta)}{(\xi',\eta')}{\C^2} = \left(\inner{\xi}{\xi'}{\Hilbert_k}, \inner{\eta}{\eta'}{\Hilbert_\infty}\right) \text.
\end{equation*}
The C*-algebra $\D_{k,\varsigma} \coloneqq \A_{k,\varsigma} \oplus \A_{\infty,\sigma}$ acts by adjoinable operators on $\Hilbert_k\oplus\Hilbert_\infty$ by setting, of course, for all $(a,b) \in \D_{k,\varsigma}$ and $(\xi,\eta)\in\Hilbert_k\oplus\Hilbert_\infty$:
\begin{equation*}
	(a,b)(\xi,\eta) = (a\xi,b\eta) \text.
\end{equation*}
Therefore, $(\Hilbert_k \oplus \Hilbert_\infty, \D_{k,\varsigma}, \C^2)$ is a C*-correspondence. Since $\TDN_k$ makes the $\D_{k,\varsigma}$ right module $\Hilbert_k \oplus \Hilbert_\infty$ into a metrized module, we turn it into a metrical C*-correspondence by introducing an L-seminorm on $\C^2$. Thus, for all $(z,w) \in \C^2$, let
\begin{equation*}
	\Lip[Q](z,w) \coloneqq \frac{1}{3\varepsilon} |  z - w | \text.
\end{equation*}
It is obvious that $\Lip[Q]$ is an L-seminorm on $\C^2$ (it is in fact the Lipschitz seminorm for the metric on the two point set placing each point at distance $3\varepsilon$ from the other). In particular, $(\C^2,\Lip[Q])$ is a tunnel from $(\C,0)$ to $(\C,0)$ of extent $3\varepsilon$.

We note that since $\theta_k$ is an isometry, we also obtain the Leibniz relation:
\begin{align*}
	\Lip[Q](\inner{\xi}{\xi'}{\Hilbert_k}, \inner{\eta}{\eta'}{\Hilbert_\infty})
	&= \frac{1}{3\varepsilon}\left| \inner{\xi}{\xi'}{\Hilbert_k} - \inner{\eta}{\eta'}{\Hilbert_\infty}\right| \\
	&= \frac{1}{3\varepsilon}\left| \inner{\theta_k(\xi)}{\theta_k(\xi')}{\Hilbert_\infty} - \inner{\eta}{\eta'}{\Hilbert_\infty}\right| \\
	&\leq \frac{1}{3\varepsilon}\left| \inner{\theta_k(\xi)-\eta}{\theta_k(\xi')}{\Hilbert_\infty} + \inner{\eta}{\theta_k(\xi')-\eta'}{\Hilbert_\infty}\right| \\
	&\leq \frac{1}{3\varepsilon}\left(\norm{\theta_k(\xi)-\eta}{\Hilbert_\infty} \norm{\xi'}{\Hilbert_k} + \norm{\eta}{\Hilbert_\infty}\norm{\theta_k(\xi') - \eta'}{\Hilbert_\infty}\right) \\
	&\leq \norm{\xi'}{\Hilbert_k} \TDN_k(\xi,\eta) + \norm{\eta}{\Hilbert_\infty} \TDN_k(\xi',\eta') \\
	&\leq \TDN_k(\xi',\eta') \TDN_k(\xi,\eta) + \TDN_k(\xi,\eta) \TDN_k(\xi',\eta') \\
	&= 2 \TDN_k(\xi,\eta) \TDN_k(\xi',\eta') \text.
\end{align*}

Altogether, we conclude that $(\Hilbert_k\oplus\Hilbert_\infty, \TDN_k, \D_{k,\varsigma}, \TLip_{k,\varsigma})$ is a $(4d,0,1,0,G_d,2)$ metrical C*-correspondence. It is then obvious that, setting $p_1:(z,w) \in \C^2\mapsto z$ and $p_2:(z,w)\in\C^2\mapsto w$,
\begin{equation*}
	\Upsilon_{k,\varsigma,\varepsilon}\coloneqq\left[ \left(\Hilbert_k\oplus\Hilbert_\infty, \TDN_k, \D_{k,\varsigma}, \TLip_{k,\varsigma}, \C^2, \Lip[Q]\right), (j_k,j_{k,\varsigma},p_1), (j_\infty,j_{\infty,\sigma},p_2) \right]
\end{equation*}
is a $(4d,0,1,0,G_d,2)$ metrical tunnel (not relaxed!), whose extent is the maximum of the extent of $(\D_{k,\varsigma}, \TLip_{k,\varsigma}, j_{k,\varsigma}, j_{\infty,\sigma})$ (which is at most $\varepsilon$) and the extent of $(\C^2,\Lip[Q],p_1,p_2)$, which is $3\varepsilon$. Hence, the extent of our metrical tunnel is at most $3\varepsilon$:
\begin{equation}\label{upsilon-eq}
	\tunnelextent{\Upsilon_{k,\varsigma,\varepsilon}} \leq 3\varepsilon \text.
\end{equation}

We now compute the separation for the unitary actions generated by $\Dirac_k$ and $\Dirac_\infty$. Since $\exp(it\Dirac_k)$ is unitary, the operator $\theta_k \exp(it\Dirac_k)\theta_k^\ast$ has norm at most $1$. Thus, for all $t \in [0,\infty)$, $\xi \in \Hilbert_k$ and $\eta \in \Hilbert_\infty$: 
\begin{align*}
	\norm{\theta_k(\exp(it \Dirac_k)\xi) - \exp(it\Dirac_\infty)\eta}{\Hilbert_\infty}
	&\leq \norm{\theta_k\exp(it \Dirac_k)\theta_k^\ast\theta_k \xi - \exp(it\Dirac_\infty)\eta}{\Hilbert_\infty} \\
	&\leq \underbracket{\opnorm{\theta_k\exp(it\Dirac_k)\theta_k^\ast}{}{\Hilbert_\infty}}_{=1} \norm{\theta_k(\xi)-\eta}{\Hilbert_\infty} \\
	&\quad + \norm{\left(\theta_k\exp(it\Dirac_k)\theta_k^\ast-\exp(it\Dirac_\infty)\right)\eta}{\Hilbert_\infty} \\
	&\leq 3\varepsilon \TDN_k(\xi,\eta) + \norm{\left(\theta_k\exp(it\Dirac_k)\theta_k^\ast-\exp(it\Dirac_\infty)\right)\eta}{\Hilbert_\infty}  \\ 
	&\leq 3\varepsilon\TDN_k(\xi,\eta) + \norm{\left(\theta_k\exp(it\Dirac_k)\theta_k^\ast-\exp(it\Dirac_\infty)\right)(\eta-v_\infty^f\eta)}{\Hilbert_\infty} \\
	&\quad + \norm{\left(\theta_k\exp(it\Dirac_k)\theta_k^\ast-\exp(it\Dirac_\infty)\right) v_\infty^f \eta}{\Hilbert_\infty} \\
	&\leq 3\varepsilon\TDN_k(\xi,\eta) +  2 \varepsilon^2 \CDN_\infty(\eta)  \\
	&\quad + \norm{\left(\theta_k\exp(it\Dirac_k)\theta_k^\ast-\exp(it\Dirac_\infty)\right) v_\infty^f \eta}{\Hilbert_\infty} \\
	&\leq 5 \varepsilon\TDN_k(\xi,\eta) + \norm{\left(\theta_k\exp(it\Dirac_k)\theta_k^\ast-\exp(it\Dirac_\infty)\right) v_\infty^f \eta}{\Hilbert_\infty} \text.
\end{align*}

 Therefore, using \cite[Ch. 9, Eqn (2.3), p. 497]{Kato}, and noting that $\theta_k^\ast \theta_k = 1$ (i.e. $\theta_k$ is an isometry):
\begin{align*}
	&\norm{\Big(\theta_k\exp(it\Dirac_k)\theta_k^\ast-\exp(it\Dirac_\infty)\Big) v_\infty^f \eta}{\Hilbert_\infty} \\
	&\quad = \norm{(\exp(it \theta_k \Dirac_k\theta_k^\ast) - \exp(it\Dirac_\infty))v_\infty^f\eta}{\Hilbert_\infty} \\
	&\quad \leq \int_0^t \norm{\left(\exp(i (t-s) \theta_k\Dirac_k\theta_k^\ast) (\theta_k\Dirac_k\theta_k^\ast-\Dirac_\infty)\exp(is\Dirac_\infty)\right)  v_\infty^f\eta}{\Hilbert_\infty}\, ds\\
	&\quad \leq \int_0^t \opnorm{\theta_k\Dirac_k\theta_k^\ast-\Dirac_\infty}{}{\Hilbert_\infty(S_\varepsilon)} \norm{v_\infty^f\eta}{\Hilbert_\infty} \, ds \\
	&\quad \leq t \varepsilon^2\CDN_\infty(\eta) \text.
\end{align*} 
Therefore, for all $t \in \left[0,\frac{1}{\varepsilon}\right]$, we conclude:
\begin{equation*}
	\norm{\theta_k(\exp(it \Dirac_k)\xi) - \exp(it\Dirac_\infty)\eta}{\Hilbert_\infty} \leq 6 \varepsilon \TDN_k(\xi,\eta) \text.
\end{equation*}

From this, we easily conclude that:
\begin{equation}\label{sep-eq}
	\tunnelsep{(\exp(it\Dirac_k))_{t \in \left[0,\frac{1}{6\varepsilon}\right]}}{(\exp(it\Dirac_\infty))_{t \in \left[0,\frac{1}{6\varepsilon}\right]}} \leq 6\varepsilon\text.
\end{equation}

We are left with the computation of the separation for our twists. This is rather simple here, since our twists are actually bounded. The two-sided Riesz estimate in Corollary \ref{car-twist-estimate-cor}, and our choice of $\Omega$, instead give
\begin{equation*}
	\opnorm{\rho_{k,\varsigma}(a)-a}{}{\Hilbert_k}
	\leq\varepsilon\Lip_{k,\varsigma}(a).
\end{equation*}
Therefore, if $d_0\coloneqq(a,b) \in \dom{\TLip_{k,\varsigma}}$ with $\max\{\norm{d_0}{\D_{k,\varsigma}},\TLip_{k,\varsigma}(d_0)\} \leq 1$, then $\Lip_{k,\varsigma}(a) \leq 1$ and $\Lip_{\infty,\sigma}(b)\leq 1$. For all vectors and test vectors occurring in the definition of the separation with $D$-norm at most $1$, and noting that $\rho_{\infty,\sigma}$ is the identity, we obtain:
\begin{align*}
	\Big| &\inner{\rho_{k,\varsigma}(a)\xi}{j_k(\omega)}{\Hilbert_k} - \inner{b\eta}{j_\infty(\omega)}{\Hilbert_\infty}\Big|\\
	&\leq \opnorm{\rho_{k,\varsigma}(a) - a}{}{\Hilbert_k} \\
	&\quad + \Big| \inner{a\xi}{j_k(\omega)}{\Hilbert_k} - \inner{b\eta}{j_\infty(\omega)}{\Hilbert_\infty}\Big| \\
	&\leq \varepsilon + 2G_d\tunnelextent{\Upsilon_{k,\varsigma,\varepsilon}}
	\leq (1+6G_d)\varepsilon \text,
\end{align*} 
using Lemma \ref{sanity-lemma}. The same estimate holds in the reverse direction, and taking adjoints and complex conjugates gives the corresponding estimate for the adjoint families used in the definition of the magnitude.

Therefore for all $(k,\varsigma) \in \Omega$, we conclude that:
\begin{equation}\label{sep-eq-2}
 \tunnelsep{(\rho_{k,\varsigma}(a)^\ast)_{(a,b) \in B(\Upsilon_{k,\varsigma,\varepsilon})}}{(b^\ast)_{(a,b) \in B(\Upsilon_{k,\varsigma,\varepsilon})}} \leq (1+6G_d)\varepsilon \text.
\end{equation}

We have therefore proven that for all $(k,\varsigma) \in \Omega$, by Expressions \eqref{upsilon-eq}, \eqref{sep-eq} and \eqref{sep-eq-2},
\begin{equation*}
	\tunnelmagnitude{\Upsilon_{k,\varsigma,\varepsilon}; (1+6G_d)\varepsilon} \leq (1+6G_d)\varepsilon 
\end{equation*}
and thus
\begin{equation*}
	\spectralpropinquity{\mathcal{T}}((\A_{k,\varsigma},\Hilbert_k,\Dirac_k,\rho_{k,\varsigma}),(\A_{\infty, \sigma}, \Hilbert_\infty, \Dirac_\infty,\mathrm{id})) \leq (1+6G_d)\varepsilon \text.
\end{equation*}
This completes our proof.
\end{proof}

\bibliographystyle{amsplain} \bibliography{../thesis.bib}
  
\vfill

\end{document}